\normalfont\fontsize{12}{12}\bfseries}{\thesection}{1em}{}
\normalfont\fontsize{10}{10}\bfseries}{\thesubsection}{1em}{}
\DeclareMathOperator{\fpdim}{FPdim}
\newtheoremstyle{mytheoremstyle}{1em}{0pt}{\itshape}{}{\bf}{.}{.5em}{} 
\newtheoremstyle{mytheoremstyle2}{1em}{0pt}{}{}{\bf}{.}{.5em}{} 
\theoremstyle{mytheoremstyle}
\newtheorem{theo}{Proposition}[section]
\newtheorem{lem}[theo]{Lemma}
\newtheorem{cor}[theo]{Corollary}
\newenvironment{customthm}[1]
  {\innercustomthm}
  {\endinnercustomthm}
\newenvironment{customlem}[1]
  {\innercustomlem}
  {\endinnercustomlem}
\newenvironment{customcor}[1]
  {\innercustomcor}
  {\endinnercustomcor}
\theoremstyle{theorem}
\newtheorem*{theorem}{Theorem}
\theoremstyle{mytheoremstyle2}
\newtheorem{note}[theo]{Note}
\newtheorem{defi}[theo]{Definition}
\newtheorem{example}[theo]{Example}
\newtheorem{question}[theo]{Question}
\newtheorem{conjecture}[theo]{Conjecture}
\title{\Large Algebraic number fields generated by \\ Frobenius-Perron dimensions in fusion rings}
\date{}
\author{\large Terry Gannon and Andrew Schopieray}
\begin{document}
\parskip = \baselineskip
\setlength{\parindent}{0cm}

\maketitle

\begin{abstract}
\noindent From a unifying lemma concerning fusion rings, we prove a collection of number-theoretic results about fusion, braided, and modular tensor categories.  First, we prove that every fusion ring has a dimensional grading by an elementary abelian 2-group.  As a result, we bound the order of the multiplicative central charge of arbitrary modular tensor categories.  We also introduce Galois-invariant subgroups of the Witt group of nondegenerately braided fusion categories corresponding to algebraic number fields generated by Frobenius-Perron dimensions.  Lastly, we provide a complete description of the fields generated by the Frobenius-Perron dimensions of simple objects in $\mathcal{C}(\mathfrak{g},k)$, the modular tensor categories arising from the representation theory of quantum groups at roots of unity, as well as the fields generated by their Verlinde eigenvalues.    
\end{abstract}


\begin{section}*{Introduction}

The study of fusion rings is primarily motivated by those which can be realized as Grothendieck rings of fusion categories, a structured class of semisimple monoidal categories with finitely many isomorphism classes of simple objects.  Meanwhile, the study of fusion categories and their relatives (braided, ribbon, and modular tensor categories) has found application to topological quantum computing \cite{ericwang}, invariants of knots and 3-manifolds \cite{turaev}, subfactor theory \cite{mug2,mug3}, vertex operator algebras \cite{huang2005vertex} and conformal field theory \cite{kawaconformal}, to name a few.  As weakly-defined objects, it is perhaps not surprising that results about arbitrary fusion rings are few and far between with \cite{tcat,ENO} being common references.  Our guiding observation is an elementary, but novel, fact about Frobenius-Perron eigenvalues of fusion matrices under additive decomposition.

\begin{customlem}{\ref{theorem}}
Let $R$ be a fusion ring.  If $x_1,x_2\in R$ are $\mathbb{Z}_{\geq0}$-linear combinations of basis elements of $R$, then $\mathrm{FPdim}(x_1),\mathrm{FPdim}(x_2)\in\mathbb{Q}(\mathrm{FPdim}(x_1+x_2))$.
\end{customlem}

\par Though elementary, Lemma \ref{theorem} allows for statements about weakly integral fusion rings (i.e. \!$\mathrm{FPdim}(R)\in\mathbb{Z}$) to be extended to arbitrary number fields.  We provide four examples of this philosophy but it is clear that many more exist.  The following notation will be used extensively throughout the paper.  For a fusion ring $R$, define $\mathbb{K}_x:=\mathbb{Q}(\mathrm{FPdim}(x))$ for any $x\in R$, $\mathbb{K}_0:=\mathbb{Q}(\mathrm{FPdim}(R))$, and $\mathbb{K}_1:=\mathbb{Q}(\mathrm{FPdim}(x):x\in R)$.  If $\mathcal{C}$ is a fusion category, we will use the same notation where $R$ is the Grothendieck ring of $\mathcal{C}$.  We include a table of recurring notation in Figure \ref{fig:X}.

\begin{paragraph}{Application 1.} Gelaki and Nikshych \cite[Theorem 3.10]{nilgelaki} identify a faithful grading of weakly integral fusion rings, which has been useful in determining the structure of solvable, nilpotent, and weakly group-theoretical fusion categories.  Refining Lemma 1.1, we can generalize their grading to any fusion ring.   Consider for instance any product $X$ of basis elements, and let $b_i$ be any basis element appearing in $X$. Then $\mathbb{Q}(\mathrm{FPdim}(b_i))=\mathbb{Q}(\mathrm{FPdim}(X))$ (Lemma \ref{theorem}  only gives containment). In other words, we have the following proposition.
\begin{customthm}{\ref{intrograd}}
If $R$ is a fusion ring, $R$ is faithfully graded by $\mathrm{Gal}(\mathbb{K}_1/\mathbb{K}_0)$.
\end{customthm}
This result could also be gleaned from a result on table algebras due to Blau \cite[Theorem 7.5]{MR3108081}.  The group $\mathrm{Gal}(\mathbb{K}_1/\mathbb{K}_0)$ is an elementary abelian 2-group (possibly trivial) and the trivial component of this grading is the fusion subring consisting of all $x\in R$ with $\mathrm{FPdim}(x)\in\mathbb{K}_0$.  By \cite[Corollary 3.7]{nilgelaki}, $\mathrm{Gal}(\mathbb{K}_1/\mathbb{K}_0)$ is isomorphic to a quotient of the universal grading group $U(R)$.  Thus even modest restrictions on the universal grading group of a fusion ring has powerful repurcussions for the potential Frobenius-Perron dimensions of elements.
\end{paragraph}

\begin{paragraph}{Application 2.}  We prove that the order of the multiplicative central charge $\xi(\mathcal{C})$ of a modular tensor category $\mathcal{C}$ (over $\mathbb{C}$) is constrained by the exponent of $\mathrm{Gal}(\mathbb{L}_\mathbbm{1}/\mathbb{Q})$, where
\begin{equation}
\mathbb{L}_\mathbbm{1}:=\mathbb{Q}(\dim(X):\text{simple }X\in\mathcal{C}).
\end{equation}
This fact first appeared 20 years ago in the special case $\mathbb{L}_\mathbbm{1}=\mathbb{Q}$ in an unpublished work of Coste and the first author \cite[Proposition 3(b)]{gannonunpub}.  This special case was later revisited by Dong, Lin and Ng \cite[Theorem 6.10]{dong2015congruence}.  We generalize these arguments to any algebraic number field.  For all $n\in\mathbb{Z}_{\geq1}$ define $f(n)$ to be the largest $m\in\mathbb{Z}_{\geq2}$ such that the exponent of $(\mathbb{Z}/m\mathbb{Z})^\times$ is equal to $n$.

\begin{customcor}{\ref{bettercor}}
Let $\mathcal{C}$ be a modular tensor category and let $N\in\mathbb{Z}_{\geq1}$ be the exponent of $\mathrm{Gal}(\mathbb{L}_\mathbbm{1}/\mathbb{Q})$.  Then $\xi(\mathcal{C})$ is a root of unity of order dividing $f(2N)/3$.
\end{customcor}
When $\mathcal{C}$ is pseudounitary, $\mathbb{L}_\mathbbm{1}=\mathbb{K}_1$ and so Proposition \ref{intrograd} with Corollary \ref{bettercor} imply that $\xi(\mathcal{C})$ is a root of unity of order dividing $2f(2N)/3$, where $N$ is the exponent of $\mathrm{Gal}(\mathbb{K}_0/\mathbb{Q})$.
\end{paragraph}

\begin{paragraph}{Application 3.}  The Witt group $\mathcal{W}$ of nondegenerately braided fusion categories, introduced by Davydov, M\"uger, Nikshych, and Ostrik \cite{DMNO}, consists of equivalence classes $[\mathcal{C}]$ of nondegenerately braided fusion categories $\mathcal{C}$ modulo Drinfeld centers, with abelian group operation $[\mathcal{C}][\mathcal{D}]:=[\mathcal{C}\boxtimes\mathcal{D}]$.  We define a new invariant of Witt classes $\Phi([\mathcal{C}])$ as the intersection of $\mathbb{K}_0$ over all nondegenerately braided fusion categories Witt equivalent to $\mathcal{C}$.  Multiplicative central charge is an invariant of Witt equivalence \cite[Lemma 5.27]{DMNO} and this has been partially generalized to higher multiplicative central charges as well \cite[Theorem 6.1]{schopieraywang}.  But multiplicative central charge is only well-defined for (pre-)modular tensor categories, while $\Phi([\mathcal{C}])$ is defined on all of $\mathcal{W}$.  Of most interest is the following consequence.
\begin{customthm}{\ref{wittt}}
Let $\mathbb{K}$ be an algebraic number field.  The set $\mathcal{W}_\mathbb{K}:=\{[\mathcal{C}]\in\mathcal{W}:\Phi([\mathcal{C}])\subset\mathbb{K}\}$
is a Galois-invariant subgroup of $\mathcal{W}$.
\end{customthm}
One can think of $\mathcal{W}_\mathbb{K}$ as a generalization of $\mathcal{W}_\mathrm{pt}\subset\mathcal{W}$, the subgroup of equivalence classes of pointed modular tensor categories, whose structure is explicitly known \cite[Section 5.3]{DMNO}.
\end{paragraph}

\begin{paragraph}{Application 4.}  We prove $\mathbb{K}_\lambda=\mathbb{Q},\mathbb{K}_0,\mathbb{K}_1$ for all simple $\lambda\in\mathcal{C}(\mathfrak{g},k)$, the modular tensor categories arising from the representation theory of quantum groups at roots of unity, using (i) a classification of fusion subcategories of $\mathcal{C}(\mathfrak{g},k)$ \cite[Theorem 1]{Sawin06} and (ii) inspection of the $\fpdim$ of at most two objects in the category.  We explicitly describe these fields in Figure \ref{fig:B} and tabulate the categories for which $[\mathbb{K}_0:\mathbb{Q}]\leq9$ in Figure \ref{fig:A}.  Most surprisingly, all fields involved are of the form $\mathbb{Q}_n:=\mathbb{Q}(\cos(2\pi/n))$ or $\mathbb{Q}(\sqrt{n})$ for some $n\in\mathbb{Z}_{\geq1}$ with precisely 2 exceptions: $\mathcal{C}(F_4,4)$ and $\mathcal{C}(E_8,5)$.  The exception $\mathbb{Q}(\mathrm{FPdim}(\mathcal{C}(F_4,4)))$ is equal to $\mathbb{Q}(\mathrm{FPdim}(\mathcal{Z}(\mathcal{H})))$ where $\mathcal{Z}(\mathcal{H})$ is the Drinfeld center of the fusion category corresponding to the extended Haagerup subfactor \cite[Equation(s) 9]{exthaag}.  The algebraic number fields generated by FPdims must be real and cyclotomic \cite[Corollary 8.54]{DNO}, i.e. \!subfields of $\mathbb{Q}_n$, but by any (figurative) measure, the fields $\mathbb{K}_\lambda$ represent a measure zero set of all real cyclotomic fields.

\par To illustrate the sparsity of $\mathbb{Q}_n$, consider the real cyclotomic fields $\mathbb{Q}(\sqrt{m_1},\ldots,\sqrt{m_k})$ where $m_j$ are distinct square-free integers greater than $1$ for all $1\leq j\leq k$.   Of these, the only of the form $\mathbb{Q}_n$ are $\mathbb{Q}_5=\mathbb{Q}_{10}=\mathbb{Q}(\sqrt{5})$, $\mathbb{Q}_8=\mathbb{Q}(\sqrt{2})$, $\mathbb{Q}_{12}=\mathbb{Q}(\sqrt{3})$, and $\mathbb{Q}_{24}=\mathbb{Q}(\sqrt{2},\sqrt{3})$.  For another measure of the sparsity of $\mathbb{Q}_n$, note that $[\mathbb{Q}_n:\mathbb{Q}]=\varphi(n)/2$ where $\varphi(n)$ is the Euler totient function.  It is well-known that the Euler totient function is not surjective on the even positive integers.  Hence there does not exist $\mathbb{Q}_n$ of degree $m$ over $\mathbb{Q}$ for any $m\in\mathbb{Z}_{\geq1}$ which is not in the image of $\varphi/2$.  E.g. \!there does not exist $\mathbb{K}_\lambda$ for simple $\lambda\in\mathcal{C}(\mathfrak{g},k)$ such that
\begin{equation}\label{inf}
[\mathbb{K}_\lambda:\mathbb{Q}]=7,13,17,19,25,31,34,37,38,43,45,47,49,57,59,61,62,134, 71, 73, 76,\ldots
\end{equation}
List (\ref{inf}) is infinite (e.g. \!\cite[Theorem 3.1]{coleman2012remarks}).  Finally we use knowledge of $\mathbb{K}_1$ to explicitly describe the fields $\mathbb{L}$ generated by all Verlinde eigenvalues for $\mathcal{C}(\mathfrak{g},k)$.  Set $\xi_n:=\exp(2\pi i/n)$ for $n\in\mathbb{Z}_{\geq1}$.  If $N\in\mathbb{Z}_{\geq1}$ is the conductor of a modular tensor category $\mathcal{C}$, the modular data of $\mathcal{C}$ is contained in $\mathbb{Q}(\xi_N)$ and \cite[Proposition 6.7(iii)]{dong2015congruence} implies $\mathrm{Gal}(\mathbb{Q}(\xi_N)/\mathbb{L})$ is an elementary abelian 2-group.
\end{paragraph}
%

\begin{figure}[H]
\centering
\begin{equation*}
\begin{array}{|rc|c|c|c|c|}
\hline \mathrm{Type}& \mathrm{Conditions} & \mathbb{K}_0 & \mathbb{K}_1 &  \mathrm{Exceptions}\\\hline\hline
A_{n,k}:&\text{ odd }n\text{ and even }\kappa & \mathbb{Q}_\kappa & \mathbb{Q}_{2\kappa} & \\
&\text{else}& \mathbb{Q}_\kappa & \mathbb{Q}_{\kappa} &k=1\\\hline
B_{n,k}:&\text{odd }n & \mathbb{Q}_{2\kappa} & \mathbb{Q}_{4\kappa} & k=1,2 \\
&\text{even }n & \mathbb{Q}_{2\kappa} & \mathbb{Q}_{2\kappa} & k=1,2 \\\hline
C_{n,k}:& & \mathbb{Q}_{2\kappa} & \mathbb{Q}_{2\kappa} & k=1 \\\hline
D_{n,k}:&n\equiv2,3\,\,(\mathrm{mod}\,\,4)\text{ and even }\kappa & \mathbb{Q}_\kappa  & \mathbb{Q}_{2\kappa} & k=2  \\
&\text{else}& \mathbb{Q}_\kappa  & \mathbb{Q}_\kappa &  k=1,2 \\\hline
E_{6,k}: & &\mathbb{Q}_{\kappa} & \mathbb{Q}_{\kappa} & k=1,3 \\\hline
E_{7,k}: & \text{even }\kappa&\mathbb{Q}_{\kappa} & \mathbb{Q}_{2\kappa} & k=2 \\
 &\text{odd }\kappa &\mathbb{Q}_{\kappa} & \mathbb{Q}_{\kappa} & k=1,3 \\\hline
E_{8,k}:&  &\mathbb{Q}_{\kappa} & \mathbb{Q}_{\kappa} & k=1,2,3,5 \\\hline
F_{4,k}:& & \mathbb{Q}_{2\kappa} & \mathbb{Q}_{2\kappa} & k=1,3,4 \\\hline
G_{2,k}:& & \mathbb{Q}_{3\kappa} & \mathbb{Q}_{3\kappa} & k=1,3 \\\hline
 \end{array}
\end{equation*}
    \caption{Frobenius-Perron dimension fields for $\mathcal{C}(\mathfrak{g},k)$}%
    \label{fig:B}%
\end{figure}

\begin{figure}[H]
\centering
\begin{equation*}
\begin{array}{|c|c|}
\hline \mathbb{K}_0 & \text{Type}\\\hline\hline
\mathbb{Q}=\mathbb{Q}_3=\mathbb{Q}_4=\mathbb{Q}_6 & \mathbf{A_{n,1}},A_{n,5-n},\mathbf{B_{n,1}},\mathbf{B_{n,2}},C_{4,1},\mathbf{D_{n,1}},\mathbf{D_{n,2}},\mathbf{E_{6,1}},\mathbf{E_{7,1}},\mathbf{E_{8,1}} \\\hline\hline\hline
\mathbb{Q}_5=\mathbb{Q}_{10} & A_{n,9-n},C_{n,6-n},D_{4,4},\mathbf{E_{6,3}},\mathbf{E_{7,2}},\mathbf{F_{4,1}},\mathbf{G_{2,1}} \\\hline
\mathbb{Q}_8 & A_{n,7-n},C_{2,3},C_{3,2},\mathbf{E_{7,2}},\mathbf{E_{8,2}}  \\\hline
\mathbb{Q}_{12} & A_{n,11-n},C_{n,7-n},D_{4,6},D_{5,4} \\\hline
\mathbb{Q}(\sqrt{6}) & \mathbf{F_{4,3}} \\\hline
\mathbb{Q}(\sqrt{21}) & \mathbf{E_{7,3}},\mathbf{G_{2,3}} \\\hline\hline\hline
\mathbb{Q}_7=\mathbb{Q}_{14} & A_{n,6-n},A_{n,13-n},C_{n,8-n},D_{n,16-2n},E_{6,2} \\\hline
\mathbb{Q}_9=\mathbb{Q}_{18} & A_{n,8-n},A_{n,17-n},B_{3,4},C_{n,10-n},D_{4,3},D_{n,20-2n},G_{2,2},E_{6,6} \\\hline
\mathbb{Q}(\cos(\frac{2\pi}{13})-\cos(\frac{3\pi}{13})) & \mathbf{F_{4,4}} \\\hline\hline\hline
\mathbb{Q}_{15}=\mathbb{Q}_{30} & A_{n,14-n},A_{n,29-n},B_{n,16-2n},C_{n,16-n},D_{n,17-2n}, \\
& D_{n,32-2n},E_{6,18},E_{7,12},F_{4,6},G_{2,6} \\\hline
\mathbb{Q}_{16} & A_{n,15-n},B_{3,3},C_{n,9-n},D_{n,18-2n},E_{6,4} \\\hline
\mathbb{Q}_{20} & A_{n,19-n},B_{3,5},B_{4,3},C_{n,11-n},D_{n,22-2n},E_{6,8} \\\hline
\mathbb{Q}_{24} & A_{n,23-n},B_{n,13-2n},C_{n,13-n},D_{n,26-2n},E_{6,12},E_{7,6},G_{2,4} \\\hline\hline\hline
\mathbb{Q}_{11}=\mathbb{Q}_{22} & A_{n,10-n},A_{n,21-n},B_{3,6},B_{4,4},C_{n,12-n},D_{4,5},D_{5,3}, \\
& D_{n,24-2n},E_{6,10},E_{7,4},\mathbf{E_{8,3}},F_{4,2} \\\hline\hline\hline
\mathbb{Q}_{13}=\mathbb{Q}_{26} & A_{n,12-n},A_{n,25-n},B_{n,14-2n},C_{n,14-n}, \\
& D_{n,15-2n},D_{n,28-2n},E_{6,14},E_{7,8} \\\hline
\mathbb{Q}_{21}=\mathbb{Q}_{42} & A_{n,20-n},A_{n,41-n},B_{n,22-2n},C_{n,22-n},D_{n,23-2n},\\
& D_{n,44-2n},E_{6,9},E_{6,30},E_{7,24},E_{8,12},F_{4,12},G_{2,10}  \\\hline
\mathbb{Q}_{28} & A_{n,27-n},B_{n,15-2n},C_{n,15-n},D_{n,30-2n},E_{6,16},E_{7,10},F_{4,5} \\\hline
\mathbb{Q}_{36} & A_{n,35-n},B_{n,19-2n},C_{n,19-n},D_{n,38-2n},E_{6,24},E_{7,18},E_{8,6},F_{4,9},G_{2,8} \\\hline
\mathbb{Q}(\cos(\frac{2\pi}{35})+\cos(\frac{12\pi}{35})) & \mathbf{E_{8,5}}
\\\hline\hline\hline
\text{none} & \text{none} \\\hline\hline\hline
\mathbb{Q}_{17}=\mathbb{Q}_{34} & A_{n,16-n},A_{n,33-n},B_{n,18-2n},C_{n,18-n},D_{n,19-2n},D_{n,36-2n}, \\
& E_{6,5},E_{6,22},E_{7,16},E_{8,4},F_{4,8} \\\hline
\mathbb{Q}_{32} & A_{n,31-n},B_{n,17-2n},C_{n,17-n},D_{n,34-2n},E_{6,20},E_{7,14},F_{4,7} \\\hline
\mathbb{Q}_{40} &  A_{n,39-n},B_{n,21-2n},C_{n,21-n},D_{n,42-2n},E_{6,28},E_{7,22},E_{8,10},F_{4,11}\\\hline
\mathbb{Q}_{48} & A_{n,47-n},B_{n,25-2n},C_{n,25-n},D_{n,50-2n},E_{6,36},E_{7,30},E_{8,18},G_{2,12},F_{4,15} \\\hline
\mathbb{Q}_{60} & A_{n,59-n},B_{n,31-2n},C_{n,31-n},D_{n,62-2n} \\
& E_{6,48},E_{7,42},E_{8,30},F_{4,21},G_{2,16} \\\hline\hline\hline
\mathbb{Q}_{19}=\mathbb{Q}_{38} & A_{n,18-n},A_{n,37-n},B_{n,20-2n},C_{n,20-n},D_{n,21-2n} \\
& D_{n,40-2n},E_{6,7},E_{6,26},E_{7,20},E_{8,8},F_{4,10} \\\hline
\mathbb{Q}_{27}=\mathbb{Q}_{54} & A_{n,26-n},A_{n,53-n},B_{n,28-2n},C_{n,28-n},D_{n,29-2n},D_{n,56-2n} \\
& E_{6,15},E_{6,42},E_{7,9},E_{7,36},E_{8,24},F_{4,18},G_{2,5},G_{2,14} \\\hline
\end{array}
\end{equation*}
    \caption{$\mathcal{C}(\mathfrak{g},k)$ with $[\mathbb{K}_0:\mathbb{Q}]\leq9$ (separation by degree, exceptions in bold)}%
    \label{fig:A}%
\end{figure}

\begin{figure}[H]
\centering
\begin{equation*}
\begin{array}{|c|c|}
\hline \text{Notation} & \text{Meaning} \\\hline\hline
\mathbb{K}_x & \mathbb{Q}(\mathrm{FPdim}(x)), x\in R \\\hline
\mathbb{K}_0 & \mathbb{Q}(\mathrm{FPdim}(R)) \\\hline
\mathbb{K}_1 & \mathbb{Q}(\mathrm{FPdim}(x):x\in R) \\\hline
R_\mathrm{pt} & \text{pointed fusion subring} \\\hline
R_\mathrm{ad} & \text{adjoint fusion subring} \\\hline
U(R) & \text{universal grading group} \\\hline
\mathcal{O}(\mathcal{C}) & \text{iso. classes of simple objects} \\\hline
\end{array}
\,\,\,
\begin{array}{|c|c|}
\hline \text{Notation} & \text{Meaning} \\\hline\hline
\overline{\mathbb{Q}} & \text{algebraic closure of }\mathbb{Q} \\\hline
\mathbb{Q}_n & \mathbb{Q}(\cos(2\pi/n)) \\\hline
\mathbb{L}_X & \mathbb{Q}(s_{Y,X}/s_{\mathbbm{1},X}:Y\in\mathcal{O}(\mathcal{C})) \\\hline
\mathbb{L} & \mathbb{Q}(s_{Y,X}/s_{\mathbbm{1},X}:X,Y\in\mathcal{O}(\mathcal{C})) \\\hline
\xi_n & \exp(2\pi i/n), n\in\mathbb{Z}_{\geq1} \\\hline
h^\vee & \text{dual Coxeter number} \\\hline
\kappa & k+h^\vee, k\in\mathbb{Z}_{\geq1} \\\hline
\end{array}
\end{equation*}
    \caption{Recurring notation ($R$ a fusion ring, $\mathcal{C}$ a fusion/modular tensor category)}%
    \label{fig:X}%
\end{figure}

\end{section}


\begin{section}{Preliminary notions}\label{prelim}

Here we recount the basic notions and fundamental examples of fusion rings and fusion, braided, and modular tensor categories.  We refer the reader to \cite{tcat} for additional categorical details and references, and to the earlier  \cite{fuchs1994fusion,gannon2005modular} for references stressing the combinatorial aspects of the subject.

\begin{subsection}{Fusion rings and categories}
\par A \emph{fusion ring} is a ring $R$ with unit $1\in R$ which is free as a $\mathbb{Z}$-module, and a distinguished basis $1=b_0,\ldots,b_n$ such that
\begin{itemize}
\item[(a)] if $b_ib_j=\sum_{k=0}^{n}c_{ij}^kb_k$ for $0\leq i,j\leq n$, then $c_{ij}^k\in\mathbb{Z}_{\geq0}$, and
\item[(b)] there exists an anti-involution (duality) of $b_0,\ldots,b_n$ given by $b_i\mapsto(b_i)^\ast$ (extended linearly to all of $R$) such that $c_{ij}^0=1$ if $b_i=(b_j)^\ast$ and $c_{ij}^0=0$ otherwise.
\end{itemize}

\begin{example}\label{fusionring}
The ring $\mathbb{Z}G$ for a finite group $G$ with basis $g\in G$ provides an example of a fusion ring with duality $g\mapsto g^{-1}$.  The complex character ring $R(G)$ with a basis of irreducible characters and duality given by complex conjugation gives another example.
\end{example}

\par A \emph{fusion subring} is a pair of fusion rings $S\subset R$  such that the basis of $S$ is contained in the basis of $R$.  Every fusion ring $R$ has at least two distinguished fusion subrings: the \emph{pointed subring} $R_\mathrm{pt}$, whose basis consists of all invertible basis elements of $R$ ($b_j$ with $b_jb_j^\ast=b_0$), and the \emph{adjoint subring} $R_\mathrm{ad}$, whose basis consists of all basis elements of $R$ which appear as summands of $b_jb_j^\ast$ for $0\leq j\leq n$.

A \emph{grading} of a fusion ring $R$ with basis $B:=\{b_j\}_{j\in I}$ for $j$ in a finite index set $I$, is a partition of $B=\cup B_g$ by a finite group $G$ such that if $b_i\in B_g$ and $b_j\in B_h$, then $b_ib_j$ is a $\mathbb{Z}_{\geq0}$-linear combination of basis elements in $B_{gh}$.  Consequently if $b_j\in B_g$, then $b_j^\ast\in B_{g^{-1}}$.  The grading is faithful if $B_g$ is not empty for any $g\in G$.  If a fusion ring $R$ is graded by a group $G$ and $R_g$ is the $\mathbb{Z}$-linear span of the basis elements in $B_g$, one has a decomposition
\begin{equation}
R=\bigoplus_{g\in G}R_g.
\end{equation}
Each fusion ring $R$ has a canonical faithful grading by a universal grading group $U(R)$ whose components are the components of $R$ when decomposed as a based $R_\mathrm{ad}$-bimodule \cite[Section 3.6]{tcat}.  In particular, $R_\mathrm{ad}$ is the trivial component of the universal grading.

\begin{example}
Let $G$ be a finite group with identity element $e\in G$ and recall the fusion rings $\mathbb{Z}G$ and $R(G)$ from Example \ref{fusionring}.  The ring $\mathbb{Z}G$ is \emph{pointed}, i.e. \!$(\mathbb{Z}G)_\mathrm{pt}=\mathbb{Z}G$, while $(\mathbb{Z}G)_\mathrm{ad}$ is trivial.  Hence the universal grading group of $\mathbb{Z}G$ is $G$.  Compare this to $R(G)$.  Let $\chi_0,\ldots,\chi_n$ be the basis for $R(G)$.  Then the basis for $R(G)_\mathrm{pt}$ are those $\chi_i$ with $\chi_i(e)=1$.  One should then verify that the basis of  $R(G)_\mathrm{ad}$ are those $\chi_i$ with $\mathrm{ker}\,\chi_i\supset\mathrm{Z}(G)$ where $\mathrm{Z}(G)$ is the center of $G$, and thus $U(R(G))$ is the group of all irreducible characters of $\mathrm{Z}(G)$. 
\end{example}

Motivating examples of fusion rings arise from the following tractable categories.

\begin{defi}\label{fusioncategory}
A \emph{fusion category} (over $\mathbb{C}$) is a $\mathbb{C}$-linear semisimple rigid tensor category with simple $\otimes$-unit $\mathbbm{1}$, finitely many isomorphism classes of simple objects, and finite-dimensional spaces of morphisms \cite[Definition 4.1.1]{tcat}.
\end{defi}

\par Denote the set of isomorphism classes of simple objects of a fusion category $\mathcal{C}$ by $\mathcal{O}(\mathcal{C})$.  One often refers to the decomposition of tensor products in $\mathcal{C}$ into simple objects, afforded by semisimplicity, as the fusion rules of $\mathcal{C}$.  The rigidity of $\mathcal{C}$ (existence of dualizing objects and maps \cite[Section 2.10]{tcat}) describes an anti-involution of $\mathcal{O}(\mathcal{C})$ denoted by $X\leftrightarrow X^\ast$.

\begin{example}\label{ex:groth}
Let $G$ be a finite group. The category of finite-dimensional $G$-graded $\mathbb{C}$-vector spaces Vec$_G$ is a fusion category.  Elements of $\mathcal{O}(\text{Vec}_G)$ are 1-dimensional $\mathbb{C}$-vector spaces indexed by elements of $G$ and duality $g\mapsto g^{-1}$.  One can also twist this construction by nontrivial associative isomorphisms \cite[Example 2.3.8]{tcat}.  The category Rep$(G)$ of finite-dimensional complex representations of $G$ is also a fusion category with the same duality described in Example \ref{fusionring}.

\par The Grothendieck ring of any fusion category $\mathcal{C}$ is a fusion ring with basis $\mathcal{O}(\mathcal{C})$ and duality given by the rigidity of $\mathcal{C}$ \cite[Proposition 4.5.4]{tcat}.  In this way the fusion rings $\mathbb{Z}G$ and $R(G)$ given in Example \ref{fusionring} arise as the Grothendieck rings of the categories from Example \ref{ex:groth}: Vec$_G$ and Rep$(G)$, respectively.  All the constructions and results related to fusion rings above are then applicable to fusion categories by considering their Grothendieck ring.
\end{example}

\end{subsection}


\begin{subsection}{Modular tensor categories}\label{sec:mod}

A class of fusion categories which are more susceptible to study are the \emph{modular tensor categories}.  Modular tensor categories are fusion categories which are equipped with a spherical structure \cite[Definition 4.7.14]{tcat} and a nondegenerate braiding \cite[Definition 8.1.1]{tcat}.  A spherical structure on a fusion category $\mathcal{C}$ allows one to define traces of endomorphisms (complex scalars), and in particular categorical dimensions $\dim(X)$ \cite[Definition 4.7.11]{tcat} and full twists $\theta_X$ \cite[Definition 8.10.1]{tcat} for each $X\in\mathcal{O}(\mathcal{C})$.

\par The above constructs produce a linear representation of the modular group $SL_2(\mathbb{Z})$ for each modular tensor category, justifying the title.  To describe this representation, we begin by constructing two $|\mathcal{O}(\mathcal{C})|\times|\mathcal{O}(\mathcal{C})|$ matrices: an $\tilde{S}$-matrix, consisting of the traces of the double braidings $\tilde{s}_{X,Y}$ for each $X,Y\in\mathcal{O}(\mathcal{C})$, and a $\tilde{T}$-matrix, a diagonal matrix of the full twists $\theta_X$.  To describe the necessary normalization constants, define
\begin{align}
p:&=\sum_{X\in\mathcal{O}(\mathcal{C})}\theta_X\dim(X)^2\text{ and} \\
\dim(\mathcal{C})&:=\sum_{X\in\mathcal{O}(\mathcal{C})}\dim(X)^2,\label{glob}
\end{align}
and fix any $\gamma\in\mathbb{C}$ such that $\gamma^3=p/\sqrt{\dim(\mathcal{C})}$ where $\sqrt{\dim(\mathcal{C})}$ is the positive root of (totally positive \cite[Note 2.5]{DNO}) $\dim(\mathcal{C})$.  The matrices $S:=(1/\sqrt{\dim(\mathcal{C})})\tilde{S}$ and $T:=\gamma^{-1}\tilde{T}$ then define a linear representation of $SL_2(\mathbb{Z})$ on the generators
\begin{equation}
\left(\begin{array}{cc} 0 & -1\\ 1&0\end{array}\right)\mapsto S\qquad\text{and}\qquad\left(\begin{array}{cc} 1&1 \\ 0&1\end{array}\right)\mapsto T.
\end{equation}
There is a veritable zoo of tools available to study modular tensor categories, but we will mainly use the following Galois action in our arguments.  The $S=(s_{X,Y})_{X,Y\in\mathcal{O}(\mathcal{C})}$ and $T=\mathrm{diagonal}(t_X)_{X\in\mathcal{O}(\mathcal{C})}$ matrices of each modular tensor category $\mathcal{C}$ are defined over a cyclotomic field $\mathbb{Q}(\zeta_N)$ for some $N$th root of unity $\zeta_N$ and come equipped with an action of the Galois group $\mathrm{Gal}(\mathbb{Q}(\zeta_N)/\mathbb{Q})\cong(\mathbb{Z}/N\mathbb{Z})^\times$ \cite{gannoncoste,de1991markov}.  The integer $N\in\mathbb{Z}_{\geq1}$ is the order of $T$ \cite[Proposition 3(a)]{gannonunpub}, and is often referred to as the conductor of the modular tensor category $\mathcal{C}$.  In particular for each $\sigma\in\mathrm{Gal}(\mathbb{Q}(\zeta_N)/\mathbb{Q})$ (integer $\ell$ coprime to $N$) there exists a unique permutation $\hat{\sigma}$ of $\mathcal{O}(\mathcal{C})$ such that for all $X,Y\in\mathcal{O}(\mathcal{C})$,
\begin{equation}
\sigma\left(\dfrac{s_{X,Y}}{s_{\mathbbm{1},Y}}\right)=\dfrac{s_{X,\hat{\sigma}(Y)}}{s_{\mathbbm{1},\hat{\sigma}(Y)}},
\end{equation}
and \cite[Theorem II(iii)]{dong2015congruence}
\begin{equation}
\sigma^2(t_X)=\sigma^2\left(\dfrac{\theta_X}{\gamma}\right)=\dfrac{\theta_{\hat{\sigma(X)}}}{\gamma}=t_{\hat{\sigma}(X)}.
\end{equation}
The invertible simple objects $X\in \mathcal{O}(\mathcal{C}_\mathrm{pt})$ (i.e. \!those objects with $X\otimes X^\ast\cong \mathbbm{1}$) play a fundamental role and are often called \emph{simple currents}. Tensoring with a simple current permutes the simple objects; in particular this makes $\mathcal{O}(\mathcal{C}_\mathrm{pt})$ into a finite abelian group.  Each simple current $X$ defines a grading on $\mathcal{C}$ through $Y\mapsto s_{X,Y}/s_{\mathbbm{1},Y}$.  In this way, the universal grading group of a modular tensor category is naturally isomorphic to the dual of the group $\mathcal{O}(\mathcal{C}_\mathrm{pt})$ \cite[Theorem 6.3]{nilgelaki}.

\begin{example}[Quantum groups at roots of unity]\label{ex:quant1}
Let $\mathfrak{g}$ be a simple complex finite-dimensional Lie algebra.  For each \emph{level} $k\in\mathbb{Z}_{\geq1}$ there exists a modular tensor category $\mathcal{C}(\mathfrak{g},k)$ which can be realized by applying a semisimple quotient construction (see, e.g. \!\cite{postriksemisimple}\cite[Section XI.6]{turaev}) to the representation category of $\mathcal{U}_q(\mathfrak{g})$-modules, where $\mathcal{U}_q(\mathfrak{g})$ is a $q$-deformed enveloping algebra associated to $\mathfrak{g}$ at a root of unity dependent on $k$, or equivalently the category of highest-weight integrable $\hat{\mathfrak{g}}$-modules of level $k$, where $\hat{g}$ is the corresponding affine Lie algebra.  These are also the categories of modules of the rational vertex operator algebras associated to $\hat{\mathfrak{g}}$ and level $k$ \cite{kac2013}.  The categories $\mathcal{C}(\mathfrak{g},k)$ have a rich history originating in the mathematical physics literature \cite{ap} and play a pivotal role in the general study of fusion categories as a large class of examples not arising from the representation theory of finite groups.  We invite the reader to \cite[Section 3.3]{BaKi} or \cite{rowell} for a more substantial survey and history of the subject.

\par Elements of $\mathcal{O}(\mathcal{C}(\mathfrak{g},k))$ are indexed by weights of $\mathfrak{g}$ which lie in the Weyl alcove \cite[Section 4]{Sawin03}, a finite truncation of the classical dominant Weyl chamber.  Although not used in what follows, fusion rules in $\mathcal{C}(\mathfrak{g},k)$ can be computed using the Kac-Walton formula for affine Lie algebras \cite[p. 288]{kac2013}\cite{walton} (\cite[Section 5]{Sawin03} in the language of quantum groups), and duality is contragradience inherited from the classical representation theory \cite[Chapter 6]{hump}.
\end{example}

\end{subsection}

\begin{subsection}{Frobenius-Perron dimension}
\par Oskar Perron \cite{perron} demonstrated in 1907 that square matrices of positive real numbers enjoy the following fundamental property.
\begin{theorem}[Frobenius-Perron]\label{frobper}
If $M$ is a square matrix of positive real numbers, then the spectral radius of $M$ is one of its eigenvalues.
\end{theorem}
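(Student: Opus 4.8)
The statement only requires that the spectral radius $\rho(M)=\max\{|\mu|:\mu\text{ an eigenvalue of }M\}$ be an eigenvalue, so it suffices to exhibit a single eigenvalue $\lambda$ of $M$ with $\lambda>0$ and $\lambda\ge|\mu|$ for every complex eigenvalue $\mu$ of $M$. The plan is to produce such a $\lambda$ together with strictly positive right and left eigenvectors, and then to dominate all other eigenvalues by a triangle-inequality argument.

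First I would produce a positive eigenpair via a fixed-point argument. Let $n$ be the size of $M$ and let $\Delta=\{x\in\mathbb{R}^n:x_i\ge0,\ \sum_i x_i=1\}$ be the standard simplex, which is compact and convex. Since every entry of $M$ is strictly positive, $Mx$ has strictly positive entries for every $x\in\Delta$, so $\lVert Mx\rVert_1>0$ and the map $f\colon\Delta\to\Delta$ given by $f(x)=Mx/\lVert Mx\rVert_1$ is well-defined and continuous. Brouwer's fixed-point theorem gives $v\in\Delta$ with $f(v)=v$, that is, $Mv=\lambda v$ with $\lambda:=\lVert Mv\rVert_1>0$; moreover $v=\lambda^{-1}Mv$ has strictly positive entries. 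Applying the same argument to the transpose $M^{\mathsf T}$ (again a matrix of positive entries) yields a strictly positive vector $u$ and a scalar $\lambda'>0$ with $u^{\mathsf T}M=\lambda'u^{\mathsf T}$. Pairing the two relations, $\lambda\,u^{\mathsf T}v=u^{\mathsf T}Mv=\lambda'\,u^{\mathsf T}v$, and $u^{\mathsf T}v>0$ because $u$ is positive and $v$ is nonnegative and nonzero, so $\lambda=\lambda'$.

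Next I would run the domination step. Let $\mu\in\mathbb{C}$ be any eigenvalue of $M$ with eigenvector $w\neq0$, and write $|w|$ for the vector of entrywise absolute values. For each $i$ the triangle inequality gives $(M|w|)_i=\sum_j M_{ij}|w_j|\ge|\sum_j M_{ij}w_j|=|\mu|\,|w_i|$, so $M|w|\ge|\mu|\,|w|$ entrywise. Pairing with the positive left eigenvector $u$ gives $\lambda\,u^{\mathsf T}|w|=u^{\mathsf T}M|w|\ge|\mu|\,u^{\mathsf T}|w|$, and $u^{\mathsf T}|w|>0$, whence $|\mu|\le\lambda$. Therefore $\lambda=\rho(M)$, and in particular $\rho(M)$ is an eigenvalue of $M$.

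This is a classical fact and I do not expect a genuine obstacle; the places where strict positivity of $M$ is essential (as opposed to mere nonnegativity, which would require an extra limiting argument) are the continuity and well-definedness of $f$ on all of $\Delta$, and the strict inequalities $u^{\mathsf T}v>0$ and $u^{\mathsf T}|w|>0$ used to cancel. If one prefers to avoid Brouwer, the same conclusion follows from the Collatz--Wielandt approach: maximize the function $x\mapsto\min_i(Mx)_i/x_i$ over the compact set $\{Mx/\lVert Mx\rVert_1:x\in\Delta\}$, on which it is continuous (no entry vanishes there); a short perturbation argument shows the maximizer is a positive eigenvector, and the domination step is then unchanged.
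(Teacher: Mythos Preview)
Your proof is correct and is one of the standard arguments for Perron's theorem: Brouwer on the simplex to produce a positive eigenpair, the same for $M^{\mathsf T}$ to get a positive left eigenvector with the same eigenvalue, and then the triangle-inequality domination via the left eigenvector to bound all other eigenvalues. Each step is sound as written, and you correctly flag where strict positivity of the entries is used.

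There is nothing to compare against, however: the paper does not prove this theorem. It is quoted as a classical result, with attribution to Perron (1907) and Frobenius, and used only as background to justify Definition~\ref{fpdim}. So your write-up supplies a proof where the paper deliberately omits one; it is an addition rather than an alternative.
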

\par In particular, such a matrix possesses a positive real eigenvalue.  Later, Georg Frobenius \cite{frobenius} would generalize this result to irreducible square matrices of nonnegative real numbers.  Using the Frobenius-Perron theorem, the concept of dimension can be extended to algebraic structures posessing a well-behaved notion of multiplication such as fusion rings and fusion categories.  We will refer to this real eigenvalue as the \emph{Frobenius-Perron eigenvalue} of $M$.

\begin{defi}\label{fpdim}
Let $R$ be a fusion ring with basis $b_0,\ldots,b_n$.  The \emph{Frobenius-Perron dimension} $d_i$ (or FPdim$(b_i))$ is the Frobenius-Perron eigenvalue of the fusion matrix $(c_{ij}^k)_{0\leq j,k\leq n}$ for each $0\leq i\leq n$.  We extend $\mathrm{FPdim}$ linearly to all of $R$.
\end{defi}

Proposition 3.3.6 of \cite{tcat} states $\fpdim$ is a ring homomorphism FPdim$:R\to\mathbb{C}$.  Frobenius-Perron dimensions of objects of a fusion category $\mathcal{C}$ are defined analogously by considering the Grothendieck ring of $\mathcal{C}$ as a fusion ring.  One should also note that the $\fpdim$ of any object in $\mathcal{C}$ is an algebraic integer \cite[Proposition 3.3.4(1)]{tcat}.  Lastly define the \emph{Frobenius-Perron dimension} of a fusion ring $R$ (similarly, a fusion category $\mathcal{C}$) as $\fpdim(R):=\sum_{j=0}^nd_j^2$, analagous to the \emph{global dimension} $\dim(\mathcal{C})$ defined in Equation (\ref{glob}).

\begin{example}[Quantum groups at roots of unity continued]\label{ex:quant2}
Let $\mathfrak{h}\subset\mathfrak{g}$ be a Cartan subalgebra and $\langle.\,,.\rangle$ be the invariant form on $\mathfrak{h}^\ast$ normalized so $\langle\alpha,\alpha\rangle=2$ for short roots $\alpha\in\Delta$, using the notation of \cite{hump}.  We will use $\rho$ to represent half the sum of the positive roots $\alpha\in\Delta^+$, $h^\vee$ (Figure \ref{tab:kappa}) to be the dual Coxeter number of $\mathfrak{g}$, and $m$ to be the least positive integer such that $m\rho$ is a sum of coroots $\alpha^\vee\in\Delta^\vee$.  The \emph{quantum Weyl dimension formula} \cite[Equation 3.3.5]{BaKi} states that for all $\lambda\in\mathcal{O}(\mathcal{C}(\mathfrak{g},k))$,
\begin{equation}\label{weyl}
\dim(\lambda)=\prod_{\alpha\in\Delta^+}\dfrac{[\langle\alpha,\lambda+\rho\rangle]_q}{[\alpha,\rho]_q}
\end{equation}
where $[n]_q=(q^n-q^{-n})/(q-q^{-1})$ with $q=\exp(\pi i/(m(k+h^\vee)))$.
\begin{figure}[H]
\centering
\[\begin{array}{|c|c|c|c|c|c|c|c|c|c|}
\hline\text{Type} & A_n & B_n & C_n & D_n & E_6 & E_7 & E_8 & F_4 & G_2 \\\hline
h^\vee& n+1 & 2n-1 & n+1 & 2n-2 & 12 & 18 & 30 & 9 & 4 \\\hline
\end{array}\]
    \caption{Dual Coxeter numbers $h^\vee$}%
    \label{tab:kappa}%
\end{figure}
Although we have described the quantum Weyl dimension formula using categorical dimensions of objects in $\mathcal{C}(\mathfrak{g},k)$, we can equivalently study Frobenius-Perron dimension with this formula because $\mathcal{C}(\mathfrak{g},k)$ are pseudounitary.
\begin{defi}
A fusion category is \emph{pseudounitary} if $\dim(\mathcal{C})=\fpdim(\mathcal{C})$.
\end{defi}
Each $\mathcal{C}(\mathfrak{g},k)$ is pseudounitary (moreover unitary) and so the spherical structure yielding the positive dimensions above is the canonical spherical structure \cite[Proposition 8.23]{DNO} for which $\dim(X)=\fpdim(X)$ for all $X\in\mathcal{O}(\mathcal{C}(\mathfrak{g},k))$.  To choose roots of unity $q$ different from those specified above is to choose an alternate spherical structure which may produce negative categorical dimensions, though the Frobenius-Perron dimensions are unaffected by these alternations.
\end{example}

\end{subsection}

\end{section}


\begin{section}{The main lemma and a faithful grading}\label{sec:main}

\begin{subsection}{The main lemma}

\begin{lem}\label{theorem}
Let $R$ be a fusion ring.  If $x_1,x_2\in R$ are $\mathbb{Z}_{\geq0}$-linear combinations of basis elements of $R$, then $\mathrm{FPdim}(x_1),\mathrm{FPdim}(x_2)\in\mathbb{Q}(\mathrm{FPdim}(x_1+x_2))$.
\end{lem}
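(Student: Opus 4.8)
The plan is to work with the absolute Galois group $\mathrm{Gal}(\overline{\mathbb{Q}}/\mathbb{Q})$ and show that every automorphism fixing $\mathrm{FPdim}(x_1+x_2)$ automatically fixes $\mathrm{FPdim}(x_1)$ and $\mathrm{FPdim}(x_2)$. Since each $\mathrm{FPdim}(x_i)$ is an algebraic integer (it is a root of the characteristic polynomial of an integer matrix), this is enough to conclude $\mathrm{FPdim}(x_i)\in\mathbb{Q}(\mathrm{FPdim}(x_1+x_2))$ by Galois theory. Abbreviate $d_i=\mathrm{FPdim}(x_i)$ and $d=\mathrm{FPdim}(x_1+x_2)$; because $\mathrm{FPdim}\colon R\to\mathbb{C}$ is a ring homomorphism it is in particular additive, so $d=d_1+d_2$, and $d_1,d_2$ are nonnegative real numbers, being Frobenius--Perron eigenvalues.

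The key input is the bound $|\sigma(d_i)|\le d_i$ for every $\sigma\in\mathrm{Gal}(\overline{\mathbb{Q}}/\mathbb{Q})$. To establish it, let $N_i$ be the fusion matrix of $x_i$, i.e.\ the matrix of multiplication by $x_i$ in the basis $b_0,\dots,b_n$; writing $x_i$ as a $\mathbb{Z}_{\ge0}$-combination of the $b_j$ shows $N_i$ has nonnegative integer entries. Applying the ring homomorphism $\mathrm{FPdim}$ to the structure constants shows that the strictly positive vector $v=(\mathrm{FPdim}(b_0),\dots,\mathrm{FPdim}(b_n))^{T}$ satisfies $N_i v = d_i v$, so by Frobenius--Perron theory $d_i$ is the spectral radius of $N_i$ and hence dominates the absolute value of every eigenvalue of $N_i$. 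The minimal polynomial of $d_i$ over $\mathbb{Q}$ divides the monic integer characteristic polynomial of $N_i$, so $\sigma(d_i)$ is again an eigenvalue of $N_i$, whence $|\sigma(d_i)|\le d_i$.

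Now take any $\sigma$ fixing $d$. Then $\sigma(d_1)+\sigma(d_2)=\sigma(d)=d=d_1+d_2$. Taking real parts and using $\mathrm{Re}(\sigma(d_i))\le|\sigma(d_i)|\le d_i$ together with the reality of $d_1,d_2$, we get a sum of two quantities, each bounded above by $d_i$, equal to $d_1+d_2$; hence both bounds are attained, so $\mathrm{Re}(\sigma(d_i))=|\sigma(d_i)|=d_i$, which forces $\sigma(d_i)=d_i$ for $i=1,2$. This gives the claim. I do not expect any genuinely hard step here; the only points requiring care are that the argument must not assume $R$ is commutative (handled by arguing through the characteristic polynomial of $N_i$ rather than diagonalizing the regular representation) and the precise Perron--Frobenius fact that a nonnegative matrix possessing a strictly positive eigenvector attains its spectral radius at that eigenvector, which is exactly what pins $d_i$ down as an upper bound for all of its Galois conjugates.
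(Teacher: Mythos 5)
Your proof is correct and uses essentially the same mechanism as the paper: Perron--Frobenius dominance of Galois conjugates combined with the observation that a Galois automorphism fixing the sum must then fix each summand. The paper decomposes $x_1+x_2$ into basis elements and runs a triangle-inequality argument term-by-term (which incidentally also yields the stronger conclusion that $\mathrm{FPdim}(b_j)\in\mathbb{K}_{x_1+x_2}$ for every basis element appearing in $x_1$ or $x_2$, used later in the paper), whereas you package the same idea by showing directly that $d_i$ is the spectral radius of the nonnegative integer matrix $N_i$ via the strictly positive eigenvector $v$ and then arguing with the two quantities $d_1,d_2$ alone; both routes are sound and amount to the same underlying argument.
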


\begin{proof}
Denote the basis of the fusion ring $R$ by $b_0,\ldots,b_n$.  Assume $x_i\in R$ for $i=1,2$ with $x_i=\sum_{j=0}^na_{i,j}b_j$ and $a_{i,j}\in\mathbb{Z}_{\geq0}$ for all $0\leq j\leq n$. Set $c_j:=a_{1,j}+a_{2,j}$ so that for all $\sigma\in\text{Gal}(\overline{\mathbb{Q}}/\mathbb{K}_{x_1+x_2})$, $\mathrm{FPdim}(x_1+x_2)=|\sigma(\mathrm{FPdim}(x_1+x_2))|$ and by the triangle inequality,
\begin{equation}
\mathrm{FPdim}(x_1+x_2)=\left|\sum_{j=0}^nc_j\sigma(d_j)\right|\leq\sum_{j=0}^nc_j|\sigma(d_j)|\leq\sum_{j=0}^nc_jd_j.\label{galois}
\end{equation}
Hence $\sum_{j=0}^nc_j(d_j-|\sigma(d_j)|)=0$.  But $c_j(d_j-|\sigma(d_j)|)\geq0$ for all $0\leq j\leq n$ as $d_j$ is maximal amongst its Galois conjugates in modulus by definition.  Therefore we have $c_j(d_j-|\sigma(d_j)|)=0$ and if $c_j\neq0$, then $|\sigma(d_j)|=d_j$.  The consequence of the triangle inequality being an equality is that $c_j\sigma(d_j)$ are all a positive real scalar multiple of one another (as they sum to a positive real number).  Hence when $c_j\neq0$, $\sigma(d_j)=d_j$.  One has $c_j\neq0$ if and only if $b_j$ appears in the decomposition of $x_1$ or $x_2$.  Thus $\fpdim(x_1)$ and $\fpdim(x_2)$ are fixed by all $\sigma\in\text{Gal}(\overline{\mathbb{Q}}/\mathbb{K}_{x_1+x_2})$ and therefore lie in $L$ as $\overline{\mathbb{Q}}/\mathbb{K}_{x_1+x_2}$ is a Galois extension.
\end{proof}

\begin{note}
We note that the integrality in Lemma \ref{theorem} is not crucial, but the positivity is.  One could safely replace $\mathbb{Z}$ with $\mathbb{Q}$, though we currently see no compelling reason to do so.
\end{note}

\begin{theo}\label{zerocor}
Let $R$ be a fusion ring.  If $S\subset R$ is a fusion subring, then $\mathrm{FPdim}(S)\in\mathbb{K}_0$.
\end{theo}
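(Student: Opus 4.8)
The plan is to deduce this directly from Lemma~\ref{theorem}, once $\fpdim(R)$ and $\fpdim(S)$ are exhibited as Frobenius--Perron dimensions of honest $\mathbb{Z}_{\geq0}$-linear combinations of basis elements. Although $\fpdim(R)=\sum_{j=0}^n d_j^2$ (with $d_j:=\fpdim(b_j)$) is written as a sum of squares of possibly irrational dimensions, it is also realized by a genuine element of $R$: since $\fpdim\colon R\to\mathbb{C}$ is a ring homomorphism and $\fpdim(b_j^\ast)=\fpdim(b_j)=d_j$ for every $j$, the element $r:=\sum_{j=0}^n b_jb_j^\ast$ satisfies $\fpdim(r)=\sum_{j=0}^n d_jd_{j^\ast}=\sum_{j=0}^n d_j^2=\fpdim(R)$. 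By axiom~(a) in the definition of a fusion ring, each product $b_jb_j^\ast$ — and hence $r$ — is a $\mathbb{Z}_{\geq0}$-linear combination of the basis $b_0,\ldots,b_n$.

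Next I would split $r$ along the fusion subring $S$. Let $J\subseteq\{0,1,\ldots,n\}$ index the basis elements lying in $S$, and set $x_1:=\sum_{j\in J}b_jb_j^\ast$ and $x_2:=\sum_{j\notin J}b_jb_j^\ast$. Both $x_1$ and $x_2$ are then $\mathbb{Z}_{\geq0}$-linear combinations of $b_0,\ldots,b_n$, their sum is $r$, and $\fpdim(x_1)=\sum_{j\in J}d_j^2=\fpdim(S)$. Applying Lemma~\ref{theorem} to $x_1$ and $x_2$ gives
\[
\fpdim(S)=\fpdim(x_1)\in\mathbb{Q}\bigl(\fpdim(x_1+x_2)\bigr)=\mathbb{Q}\bigl(\fpdim(r)\bigr)=\mathbb{Q}\bigl(\fpdim(R)\bigr)=\mathbb{K}_0,
\]
which is the assertion.

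I expect no real obstacle here beyond recognizing $r$ as the integral substitute for the (non-integral) regular element $\sum_j d_jb_j$, so that Lemma~\ref{theorem} becomes applicable. The only ingredient outside Lemma~\ref{theorem} is the standard identity $\fpdim(b_j^\ast)=\fpdim(b_j)$, which holds because the fusion matrix of $b_j^\ast$ is the transpose of that of $b_j$ (equivalently, $x\mapsto\fpdim(x^\ast)$ is again a ring homomorphism $R\to\mathbb{C}$ taking positive values on the basis, and the Frobenius--Perron character is the unique such homomorphism).
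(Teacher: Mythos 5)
Your proof is correct and follows essentially the same route as the paper: both apply Lemma~\ref{theorem} to the decomposition $x_1=\sum_{b_j\in S}b_jb_j^\ast$ and $x_2=\sum_{b_j\notin S}b_jb_j^\ast$, whose sum has $\fpdim$ equal to $\fpdim(R)$. The paper states this tersely; you have supplied the supporting details (that $\fpdim(b_j^\ast)=\fpdim(b_j)$, that $S$ is closed under duality so $\fpdim(x_1)=\fpdim(S)$, and that $x_1+x_2$ realizes $\fpdim(R)$ as an honest $\mathbb{Z}_{\geq0}$-combination of basis elements), all of which are accurate.
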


\begin{proof}
Let $b_0,\ldots,b_n$ be the basis of $R$.  Our result  then follows from Lemma \ref{theorem} with $x_1=\sum_{b_j\in S}b_jb_j^\ast$ and $x_2=\sum_{b_j\not\in S}b_jb_j^\ast$.
\end{proof}

\begin{theo}{\textnormal{(c.f. \!\cite[Theorem 7.3]{MR3108081})}}\label{cor:second}
Let $R$ be a fusion ring with basis $b_0,\ldots b_n$.  For all $0\leq j\leq n$, $d_j^2\in\mathbb{K}_0$.
\end{theo}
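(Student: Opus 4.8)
The plan is to deduce this directly from Lemma \ref{theorem}, in the same spirit as the proof of Proposition \ref{zerocor} but partitioning the basis as $\{b_j\}$ against its complement rather than along a fusion subring. The starting point is the observation that for each $0\leq j\leq n$ the element $b_jb_j^\ast$ is a $\mathbb{Z}_{\geq0}$-linear combination of basis elements (being a product of two basis elements), and that $\mathrm{FPdim}(b_jb_j^\ast)=d_j^2$: this uses that $\fpdim$ is a ring homomorphism together with the elementary fact $\mathrm{FPdim}(b_j^\ast)=\mathrm{FPdim}(b_j)$, which holds because the fusion matrix of $b_j^\ast$ is the transpose of that of $b_j$ and hence shares its Frobenius-Perron eigenvalue (see \cite[Proposition 3.3.4]{tcat}).

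Fixing $j$, I would then set $x_1:=b_jb_j^\ast$ and $x_2:=\sum_{i\neq j}b_ib_i^\ast$, both of which are $\mathbb{Z}_{\geq0}$-linear combinations of basis elements. Their sum is $x_1+x_2=\sum_{i=0}^nb_ib_i^\ast$, so $\mathrm{FPdim}(x_1+x_2)=\sum_{i=0}^nd_i^2=\mathrm{FPdim}(R)$, and therefore $\mathbb{Q}(\mathrm{FPdim}(x_1+x_2))=\mathbb{K}_0$. Lemma \ref{theorem} now yields $d_j^2=\mathrm{FPdim}(x_1)\in\mathbb{Q}(\mathrm{FPdim}(x_1+x_2))=\mathbb{K}_0$, which is the claim.

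There is no real obstacle here; the substance is entirely contained in Lemma \ref{theorem}, and the only auxiliary input is the standard identity $\mathrm{FPdim}(b_j^\ast)=\mathrm{FPdim}(b_j)$. It is worth noting that the identical argument with $x_1$ replaced by any partial sum $\sum_{i\in T}b_ib_i^\ast$ over a subset $T\subseteq\{0,\ldots,n\}$ shows $\sum_{i\in T}d_i^2\in\mathbb{K}_0$, which in particular recovers Proposition \ref{zerocor} when $T$ indexes the basis of a fusion subring.
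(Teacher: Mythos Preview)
Your proof is correct and is essentially identical to the paper's: the paper also applies Lemma \ref{theorem} with $x_1=\sum_{k\neq j}b_kb_k^\ast$ and $x_2=b_jb_j^\ast$, which is your choice with the roles of $x_1,x_2$ swapped. Your additional remarks (the identity $\mathrm{FPdim}(b_j^\ast)=\mathrm{FPdim}(b_j)$ and the generalization to arbitrary subsets $T$) are accurate but not needed for the statement itself.
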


\begin{proof}
Use Lemma \ref{theorem} with $x_1=\sum_{k\neq j}b_kb_k^\ast$ and $x_2=b_jb_j^\ast$.
\end{proof}

Recall that the Frobenius-Perron dimension of a fusion ring is \emph{totally} real, which is to say if $R$ is a fusion ring, for all $\sigma\in\mathrm{Gal}(\overline{\mathbb{Q}}/\mathbb{Q})$, $\sigma(\mathrm{FPdim}(R))\in\mathbb{R}$ (see the comments following \cite[Definition 3.3.12]{tcat}).  The same is true for $\dim(\mathcal{C})$ for any fusion category $\mathcal{C}$, and more generally the \emph{squared norm} $|X|^2$ \cite[Section 2.1]{ENO} for any simple $X\in\mathcal{O}(\mathcal{C})$.  When $\mathcal{C}$ is equipped with a spherical structure, $\dim(X)^2=|X|^2$ \cite[Corollary 2.10]{ENO}, hence $\dim(X)$ is totally real for any $X\in\mathcal{O}(\mathcal{C})$ as well.  Without sphericality, the categorical dimensions of simple objects may lie outside of $\mathbb{R}$ \cite[Remark 2.11]{ENO}.

\begin{theo}\label{cor:new}
If $R$ is a fusion ring, then $\mathrm{FPdim}(x)$ is totally real for all $x\in R$.
\end{theo}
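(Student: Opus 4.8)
The plan is to realize $\mathrm{FPdim}(x)^2$ as an eigenvalue of a rational symmetric positive-semidefinite matrix, so that Galois-equivariance of eigenvalue sets forces total reality. Fix $x\in R$ and write $x=\sum_j a_jb_j$ with $a_j\in\mathbb{Z}$. Since the duality $b_j\mapsto b_j^\ast$ preserves Frobenius--Perron dimension (the fusion matrices $N_{b_j}$ and $N_{b_j^\ast}$ are transposes of one another, hence cospectral), extending linearly gives $\mathrm{FPdim}(x^\ast)=\mathrm{FPdim}(x)$, and therefore $\mathrm{FPdim}(x)^2=\mathrm{FPdim}(xx^\ast)$ because $\mathrm{FPdim}$ is a ring homomorphism.

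Next I would observe that the fusion matrix of $xx^\ast$ is $M:=N_xN_x^\top$. Indeed $z\mapsto N_z$ is a ring homomorphism from $R$ to integer matrices, and $N_{x^\ast}=N_x^\top$ by the previous paragraph. Thus $M$ is an integer matrix which is real symmetric and positive semidefinite, since $v^\top Mv=\|N_x^\top v\|^2\ge 0$ for all real $v$. I would then check that $\mathrm{FPdim}(x)^2=\mathrm{FPdim}(xx^\ast)$ is an eigenvalue of $M$: writing $d=(d_0,\dots,d_n)$ for the vector of Frobenius--Perron dimensions of the basis, the homomorphism identity $\mathrm{FPdim}\big((xx^\ast)b_\ell\big)=\mathrm{FPdim}(xx^\ast)\,d_\ell$ says exactly that $d^\top M=\mathrm{FPdim}(xx^\ast)\,d^\top$, and $d\neq 0$ since every $d_\ell\ge 1$.

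From here the conclusion is immediate. As $M$ is positive semidefinite its eigenvalues are nonnegative reals, and as $M$ has rational entries its characteristic polynomial lies in $\mathbb{Q}[t]$, so $\mathrm{Gal}(\overline{\mathbb{Q}}/\mathbb{Q})$ permutes the eigenvalues of $M$. Hence every Galois conjugate of $\mathrm{FPdim}(x)^2$ is again an eigenvalue of $M$, so is $\ge 0$; that is, $\mathrm{FPdim}(x)^2$ is totally nonnegative. Consequently, for any embedding $\tau\colon\mathbb{Q}(\mathrm{FPdim}(x))\hookrightarrow\mathbb{C}$ we get $\tau(\mathrm{FPdim}(x))^2=\tau(\mathrm{FPdim}(x)^2)\ge 0$, so $\tau(\mathrm{FPdim}(x))\in\mathbb{R}$, which is precisely the assertion that $\mathrm{FPdim}(x)$ is totally real.

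The only genuinely new idea, and hence the single possible obstacle, is seeing that one should pass from $x$ to $xx^\ast$ in order to factor the relevant fusion matrix as the Gram-type matrix $N_xN_x^\top$; everything afterward is routine bookkeeping with fusion matrices and uses no commutativity of $R$. As a byproduct, taking $x=b_j$ refines Proposition \ref{cor:second}: $d_j^2$ is not merely an element of $\mathbb{K}_0$ but a totally positive one (nonnegative by the above, and nonzero since $d_j\ge 1$).
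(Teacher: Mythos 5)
Your argument is correct, but it takes a genuinely different route from the paper's. The paper first proves (Proposition \ref{cor:second}) that $d_j^2\in\mathbb{K}_0$, then invokes the total reality of $\mathbb{K}_0=\mathbb{Q}(\mathrm{FPdim}(R))$ and concludes that $\mathbb{K}_0(d_j)$ is a ``real quadratic extension of a real field, hence totally real.'' As stated, that step is a little terse: a real quadratic extension of a totally real field need not be totally real --- one must know additionally that $d_j^2$ is totally \emph{nonnegative}, not merely that it lies in a totally real field. (Consider $\sqrt{\sqrt{2}-1}$ over $\mathbb{Q}(\sqrt{2})$.) Your argument supplies exactly this missing ingredient in a self-contained way: the identity $N_{x^\ast}=N_x^\top$ (a standard consequence of Frobenius reciprocity $c_{j\ell}^k=c_{j^\ast k}^\ell$, which the paper uses freely) lets you write the fusion matrix of $xx^\ast$ as the Gram matrix $N_xN_x^\top$, a rational symmetric positive semidefinite matrix having $\mathrm{FPdim}(x)^2$ among its eigenvalues (with left eigenvector the dimension vector $d$). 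Galois equivariance of the eigenvalue set of a rational matrix then makes $\mathrm{FPdim}(x)^2$ totally nonnegative, and in fact totally positive since it is a nonzero algebraic number, so $\mathrm{FPdim}(x)$ is totally real. This is slightly longer than the paper's one-line proof but more explicit, does not rely on Lemma \ref{theorem} or Proposition \ref{cor:second} at all, and yields the sharper byproduct you note --- total positivity of $d_j^2$ --- which is precisely what the paper's phrasing quietly assumes.
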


\begin{proof}
It suffices to prove this for the $\mathrm{FPdim}$ of basis elements $b_0,\ldots,b_n$.  If $d_j\not\in\mathbb{K}_0$, Proposition \ref{cor:second} states $\mathbb{K}_0(d_j)/\mathbb{K}_0$ is a real quadratic extension of a real field, and thus $d_j$ is totally real.
\end{proof}

\begin{theo}\label{cor:first}
If $R$ is a fusion ring and $\mathbb{K}$ an algebraic number field, then
\begin{equation}
R_\mathbb{K}:=\{x\in R:\mathrm{FPdim}(x)\in \mathbb{K}\}
\end{equation}
is a fusion subring.
\end{theo}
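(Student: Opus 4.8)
The plan is to exhibit the distinguished basis of $R_\mathbb{K}$ explicitly and then verify the fusion-ring axioms for it, in the spirit of Propositions \ref{zerocor}, \ref{cor:second}, \ref{cor:new}. Write $b_0,\dots,b_n$ for the basis of $R$, put $d_j:=\mathrm{FPdim}(b_j)$, and set $B_\mathbb{K}:=\{\,b_j: d_j\in\mathbb{K}\,\}$; the claim is that $B_\mathbb{K}$ is the distinguished basis of $R_\mathbb{K}$.

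First I would record the following observation, which is the engine behind this family of corollaries: \emph{if $x=\sum_j a_jb_j$ with all $a_j\in\mathbb{Z}_{\geq0}$ and $\mathrm{FPdim}(x)\in\mathbb{K}$, then $b_j\in B_\mathbb{K}$ for every $j$ with $a_j\neq0$.} This follows by peeling off one summand at a time with Lemma \ref{theorem}: for fixed $j$ write $x=a_jb_j+x'$ with $x'$ the $\mathbb{Z}_{\geq0}$-combination of the remaining basis elements, so Lemma \ref{theorem} gives $a_jd_j=\mathrm{FPdim}(a_jb_j)\in\mathbb{Q}(\mathrm{FPdim}(x))\subseteq\mathbb{K}$, and hence $a_j\neq0$ forces $d_j\in\mathbb{K}$. (Equivalently, one reruns the triangle-inequality computation in the proof of Lemma \ref{theorem} with $\mathbb{K}$ in place of $\mathbb{K}_{x_1+x_2}$: for $\sigma\in\mathrm{Gal}(\overline{\mathbb{Q}}/\mathbb{K})$ the chain $\mathrm{FPdim}(x)=\bigl|\sum_j a_j\sigma(d_j)\bigr|\le\sum_j a_j|\sigma(d_j)|\le\sum_j a_j d_j=\mathrm{FPdim}(x)$ collapses, and positivity together with the modulus-maximality of $d_j$ force $\sigma(d_j)=d_j$ whenever $a_j\neq0$.)

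Given this, the fusion-subring requirements are immediate. The unit has $d_0=1\in\mathbb{Q}\subseteq\mathbb{K}$, so $b_0\in B_\mathbb{K}$. Since the fusion matrix of $b_j^\ast$ is the transpose of that of $b_j$ (hence has the same spectral radius), $d_{j^\ast}=d_j$, so $B_\mathbb{K}$ is stable under $\ast$ and the anti-involution axiom is inherited by the sub-basis. For multiplicative closure, take $b_i,b_j\in B_\mathbb{K}$; then $b_ib_j=\sum_k c_{ij}^kb_k$ is a $\mathbb{Z}_{\geq0}$-combination with $\mathrm{FPdim}(b_ib_j)=d_id_j\in\mathbb{K}$ (because $\mathrm{FPdim}$ is a ring homomorphism), so by the observation every $b_k$ occurring in $b_ib_j$ lies in $B_\mathbb{K}$, and the structure constants among the $b_k\in B_\mathbb{K}$ are still the nonnegative integers $c_{ij}^k$. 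Thus $\mathbb{Z}\langle B_\mathbb{K}\rangle$ is a fusion subring.

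The last step — matching this subring with $R_\mathbb{K}$ — is the one I expect to require the most care. One inclusion, $\mathbb{Z}\langle B_\mathbb{K}\rangle\subseteq R_\mathbb{K}$, is trivial by $\mathbb{Z}$-linearity of $\mathrm{FPdim}$; and the observation above already shows that any element of $R_\mathbb{K}$ which is a $\mathbb{Z}_{\geq0}$-combination lies in $\mathbb{Z}_{\geq0}\langle B_\mathbb{K}\rangle$, so $R_\mathbb{K}$ and $\mathbb{Z}\langle B_\mathbb{K}\rangle$ have the same positive cone and thus the same distinguished basis $B_\mathbb{K}$. The subtlety is that for \emph{virtual} elements $\mathrm{FPdim}$ can cancel across Galois conjugates (distinct basis elements may share a Frobenius–Perron dimension), so one should think of $R_\mathbb{K}$ as the fusion subring carved out by $B_\mathbb{K}$; the tool for analyzing which virtual combinations nonetheless land in $\mathbb{K}$ is Proposition \ref{cor:second} ($d_j^2\in\mathbb{K}_0$ for all $j$), since it makes each $d_j$ quadratic over $\mathbb{K}\mathbb{K}_0$ with conjugate $-d_j$.
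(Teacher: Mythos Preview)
Your argument is essentially the paper's own: show that $R_\mathbb{K}$ is closed under the ring operations and duality because $\mathrm{FPdim}$ is a $\ast$-preserving ring homomorphism, and invoke Lemma~\ref{theorem} to see that it is closed under positive additive decomposition, so that $B_\mathbb{K}$ serves as its basis. The paper compresses this into two sentences; you unpack it more carefully, but the ideas and their order are the same.

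Where you go beyond the paper is in your final paragraph, and you are right to do so. The paper asserts that ``the basis elements of $R$ with $\fpdim$ in $\mathbb{K}$ span $R_\mathbb{K}$,'' but as you observe this is not literally true for virtual elements: e.g.\ in the Grothendieck ring of $\mathcal{C}(A_1,4)$ the objects $b_1,b_3$ both have $\mathrm{FPdim}=\sqrt{3}\notin\mathbb{Q}$, yet $b_1-b_3\in R_\mathbb{Q}$ while $b_1-b_3\notin\mathbb{Z}\langle B_\mathbb{Q}\rangle$. So the honest statement is that $\mathbb{Z}\langle B_\mathbb{K}\rangle$ is a fusion subring and that $R_\mathbb{K}\cap R_{\geq0}=\mathbb{Z}_{\geq0}\langle B_\mathbb{K}\rangle$; the paper tacitly reads $R_\mathbb{K}$ this way. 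Your proposal is correct with that caveat, and indeed more precise than the paper's own proof on this point.
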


\begin{proof}
The set $R_\mathbb{K}$ is closed under addition and multiplication as $\fpdim$ is a ring homomorphism, closed under duality as $\fpdim(x)=\fpdim(x^\ast)$ \cite[Proposition 3.3.9]{tcat} and Lemma \ref{theorem} implies $R_\mathbb{K}$ is closed under positive additive decomposition. Moreover the basis elements of $R$ with $\fpdim$ in $\mathbb{K}$ span $R_\mathbb{K}$. 
\end{proof}

We say that a fusion ring $R$ is \emph{multiplicatively generated} by $x\in R$ if each basis element of $R$ is a summand of $x^n$ for some $n\in\mathbb{Z}_{\geq1}$.

\begin{theo}
Let $x\in R$ be an element of a fusion ring.  If $R$ is multiplicatively generated by $x$, then $\mathrm{FPdim}(y)\in\mathbb{K}_x$ for all $y\in R$.
\end{theo}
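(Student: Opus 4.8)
The plan is to reduce the statement to basis elements and then invoke Lemma \ref{theorem} together with the fact that $\mathrm{FPdim}$ is a ring homomorphism. Since $\mathrm{FPdim}$ is $\mathbb{Z}$-linear on $R$, it suffices to show $d_j = \mathrm{FPdim}(b_j)\in\mathbb{K}_x$ for every basis element $b_j$ of $R$. Write $x=\sum_i a_ib_i$ with $a_i\in\mathbb{Z}_{\geq0}$ (this nonnegativity is implicit in the notion of multiplicative generation: the structure constants $c_{ij}^k$ are nonnegative and each $x^n$ is required to contain basis elements as summands); then every power $x^n$ is again a $\mathbb{Z}_{\geq0}$-linear combination of basis elements.

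Now fix $b_j$. By hypothesis there is an $n\in\mathbb{Z}_{\geq1}$ for which $b_j$ occurs in $x^n$, say with coefficient $m\in\mathbb{Z}_{\geq1}$. Split $x^n=x_1+x_2$ with $x_1=m\,b_j$ and $x_2=x^n-m\,b_j$; both are $\mathbb{Z}_{\geq0}$-linear combinations of basis elements. Applying Lemma \ref{theorem} gives $\mathrm{FPdim}(x_1)=m\,d_j\in\mathbb{Q}(\mathrm{FPdim}(x^n))$. Since $\mathrm{FPdim}$ is a ring homomorphism, $\mathrm{FPdim}(x^n)=\mathrm{FPdim}(x)^n$, whence $\mathbb{Q}(\mathrm{FPdim}(x^n))\subseteq\mathbb{Q}(\mathrm{FPdim}(x))=\mathbb{K}_x$. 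Therefore $m\,d_j\in\mathbb{K}_x$, and dividing by the positive integer $m$ yields $d_j\in\mathbb{K}_x$, as desired; the conclusion for arbitrary $y\in R$ follows by linearity.

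There is essentially no serious obstacle here; the main (minor) point to check is that $x^n$ genuinely has nonnegative coefficients, so that the splitting $x^n=m\,b_j+(x^n-m\,b_j)$ feeds admissible inputs into Lemma \ref{theorem}. Alternatively, one can package the argument via Proposition \ref{cor:first}: Lemma \ref{theorem} applied to the decomposition of $x$ itself shows that every basis element appearing in $x$ has Frobenius-Perron dimension in $\mathbb{K}_x$, so $x$ lies in the (based) fusion subring $R_{\mathbb{K}_x}$; being closed under multiplication, $R_{\mathbb{K}_x}$ then contains every $x^n$ and hence every basis element of $R$, forcing $R_{\mathbb{K}_x}=R$. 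This is the same computation in different dress.
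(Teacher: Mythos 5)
Your proof is correct and follows essentially the same route as the paper's: reduce to basis elements, note that each $b_j$ is a summand of some $x^n$, apply Lemma \ref{theorem} to the split $x^n=m\,b_j+(x^n-m\,b_j)$, and use the ring-homomorphism property of $\mathrm{FPdim}$ to identify $\mathbb{Q}(\mathrm{FPdim}(x^n))$ with a subfield of $\mathbb{K}_x$. The paper states this in one line; your version just makes the intermediate steps (and the implicit nonnegativity of $x$'s coefficients) explicit.
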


\begin{proof}
The result follows from Lemma \ref{theorem} as every $y\in R$ is a summand of $x^n$ for some $n\in\mathbb{Z}_{\geq1}$.
\end{proof}

\end{subsection}


\begin{subsection}{A dimensional grading}\label{grading}

The fields generated by Frobenius-Perron dimensions of elements in a fusion ring $R$ are intimately tied to gradings of $R$.  In \cite[Theorem 3.10]{nilgelaki}, it was emphasized that every fusion ring with integer $\mathrm{FPdim}$ (\emph{weakly integral}) has a faithful dimensional grading by an elementary abelian 2-group.   This is true of all fusion rings.

\begin{theo}{\textnormal{(c.f. \!\cite[Theorem 7.5]{MR3108081})}}\label{intrograd}
If $R$ is a fusion ring, $R$ is faithfully graded by $\mathrm{Gal}(\mathbb{K}_1/\mathbb{K}_0)$.
\end{theo}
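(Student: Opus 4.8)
The plan is to produce an explicit grading of $R$ by the group $G := \mathrm{Gal}(\mathbb{K}_1/\mathbb{K}_0)$ and then verify faithfulness. First I would recall that, by Proposition \ref{cor:second}, $d_j^2 \in \mathbb{K}_0$ for every basis element $b_j$, so each $d_j$ satisfies a quadratic over $\mathbb{K}_0$ and hence $\mathbb{K}_1/\mathbb{K}_0$ is a (finite) multiquadratic extension; in particular $G$ is an elementary abelian $2$-group and $\mathbb{K}_0$ is precisely the fixed field $\mathbb{K}_1^G$. The natural candidate for the grading map is $b_j \mapsto g_j$, where $g_j \in G$ is characterized by $\sigma(d_j) = \chi(\sigma)\, d_j$ for all $\sigma \in G$, with $\chi(\sigma) = \pm 1$ depending on whether $\sigma$ fixes $d_j$ — i.e. $g_j$ is the element of $G$ corresponding, under the perfect pairing $G \times G \to \{\pm1\}$ coming from Kummer theory for multiquadratic extensions, to the square class of $d_j^2$ in $\mathbb{K}_0^\times / (\mathbb{K}_0^\times)^2$. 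Concretely: view $G \cong \widehat{G}$ and send $b_j$ to the character $\sigma \mapsto \sigma(d_j)/d_j \in \{\pm 1\}$.

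The key step is to check this is a grading, i.e. that if $b_k$ appears in $b_i b_j$ then $g_k = g_i g_j$. Since $\mathrm{FPdim}$ is a ring homomorphism, $d_i d_j = \sum_k c_{ij}^k d_k$, and applying $\sigma \in G$ gives $\chi_i(\sigma)\chi_j(\sigma)\, d_i d_j = \sum_k c_{ij}^k \chi_k(\sigma) d_k$, where $\chi_\ell(\sigma) := \sigma(d_\ell)/d_\ell = \pm 1$. Comparing the two expressions for $\sigma(d_i d_j)$: on the summands with $c_{ij}^k \neq 0$ we need $\chi_k(\sigma) = \chi_i(\sigma)\chi_j(\sigma)$. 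This is where the main obstacle lies — a priori the relation $\sum_k c_{ij}^k (\chi_i(\sigma)\chi_j(\sigma) - \chi_k(\sigma)) d_k = 0$ only says a $\mathbb{Z}$-linear combination of the $d_k$ vanishes, and the $d_k$ need not be linearly independent over $\mathbb{Q}$. I would resolve this by a positivity argument in the spirit of the proof of Lemma \ref{theorem}: set $P := \{k : \chi_k(\sigma) = \chi_i(\sigma)\chi_j(\sigma)\}$ and $M := \{k : c_{ij}^k \neq 0,\ \chi_k(\sigma) \neq \chi_i(\sigma)\chi_j(\sigma)\}$, and suppose $M \neq \emptyset$. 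Rearranging $\sigma(d_i d_j) = \chi_i(\sigma)\chi_j(\sigma) d_i d_j$ one gets $\sum_{k \in P} c_{ij}^k d_k = \chi_i(\sigma)\chi_j(\sigma)\big( d_i d_j - (-1)\sum_{k\in M} c_{ij}^k d_k\big)$ — more cleanly, apply $\mathrm{FPdim}$-style reasoning: consider $x_1 = \sum_{k \in M} c_{ij}^k b_k$ and $x_2 = \sum_{k \notin M} c_{ij}^k b_k$, so $x_1 + x_2 = b_i b_j$ and $\mathrm{FPdim}(b_i b_j) = d_i d_j$; Lemma \ref{theorem} gives $\mathrm{FPdim}(x_1) \in \mathbb{K}_{x_1+x_2} = \mathbb{Q}(d_i d_j)$. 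The point of the original proof of Lemma \ref{theorem} is actually sharper: for $\sigma \in \mathrm{Gal}(\overline{\mathbb{Q}}/\mathbb{K}_{x_1+x_2})$ and any basis element $b_k$ appearing in $x_1 + x_2 = b_i b_j$ one has $\sigma(d_k) = d_k$. Taking instead $\sigma \in G$ and chasing the equality case of the triangle inequality $d_i d_j = |\sigma(d_i d_j)| = |\sum_k c_{ij}^k \sigma(d_k)| \le \sum_k c_{ij}^k |\sigma(d_k)| = \sum_k c_{ij}^k d_k = d_i d_j$ forces all $c_{ij}^k \sigma(d_k)$ (for $c_{ij}^k \neq 0$) to be positive real multiples of one another; since $\sigma(d_k) = \pm d_k$ with $d_k > 0$, this forces $\sigma(d_k)$ to have a common sign $\epsilon(\sigma) \in \{\pm 1\}$ for all $k$ with $c_{ij}^k \neq 0$, and comparing with $\sigma(d_i d_j) = \chi_i(\sigma)\chi_j(\sigma)d_i d_j > 0$ forces $\epsilon(\sigma) = \chi_i(\sigma)\chi_j(\sigma)$, i.e. $\chi_k(\sigma) = \chi_i(\sigma)\chi_j(\sigma)$ for all such $k$. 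That is exactly $g_k = g_i g_j$, so the grading axiom holds (duality $b_j^\ast \mapsto g_j^{-1}$ is automatic since $d_{j^\ast} = d_j$, so $g_{j^\ast} = g_j = g_j^{-1}$).

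It then remains to verify faithfulness, i.e. that every $g \in G$ is $g_j$ for some $j$. Equivalently, the characters $\{\chi_j : j = 0,\dots,n\}$ generate $\widehat{G}$. If not, they all lie in a proper subgroup, so there is $1 \neq \sigma \in G$ with $\chi_j(\sigma) = 1$ for every $j$, i.e. $\sigma$ fixes every $d_j$ and hence fixes $\mathbb{K}_1$ pointwise — contradicting $\sigma \neq 1$ in $\mathrm{Gal}(\mathbb{K}_1/\mathbb{K}_0)$. Hence the grading is faithful. I would close by recording the promised identification of the trivial component: $R_e$ is the span of the $b_j$ with $\chi_j \equiv 1$, i.e. with $\sigma(d_j) = d_j$ for all $\sigma \in G$, which (since $d_j$ is already totally real and $d_j^2 \in \mathbb{K}_0$) means $d_j \in \mathbb{K}_1^G = \mathbb{K}_0$; so $R_e = R_{\mathbb{K}_0}$, the fusion subring of Proposition \ref{cor:first}. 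The main obstacle, as noted, is purely the $\mathbb{Q}$-linear dependence of the $d_j$: the naive "compare coefficients" argument fails, and one must instead re-run the equality-case analysis of the triangle inequality from Lemma \ref{theorem} — everything else is bookkeeping with elementary $2$-groups and Kummer theory.
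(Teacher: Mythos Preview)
Your proof is correct and follows essentially the same route as the paper: both hinge on the positivity/equality-case-of-triangle-inequality mechanism from Lemma~\ref{theorem} to force every summand of $b_ib_j$ into a single grading component, and both identify the grading group with $\mathrm{Gal}(\mathbb{K}_1/\mathbb{K}_0)$ via the quadratic extensions $\mathbb{K}_0(d_j)/\mathbb{K}_0$. Your packaging through the sign characters $\chi_j(\sigma)=\sigma(d_j)/d_j\in\{\pm1\}$ is a slightly more direct bookkeeping device than the paper's equivalence classes $\{b_j:\mathbb{K}_0(d_j)=\mathbb{K}_0(d_i)\}$ (the two are interchangeable by Kummer theory for multiquadratic extensions), but the substantive argument is the same.
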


\begin{proof}
Let $B:=\{b_0,\ldots,b_n\}$ be the basis of $R$.  Define a relation on $B$ by $b_i\sim b_j$ if and only if $\mathbb{K}_0(d_i)=\mathbb{K}_0(d_j)$.  This is clearly an equivalence relation and so the set of equivalence classes $\{B_g\}_{g\in G}$ partitions $B$ for some index set $G$.  Each equivalence class is nonempty, implying the resulting grading will be faithful.  For the purposes of notation we index the equivalence class of $b_0$ by $B_0$, and set $\mathbb{K}_i:=\mathbb{K}_0(d_i)$.  Assume $b_i\in B_r$ and $b_j\in B_s$.  If $d_id_j\in\mathbb{K}_0$ then $b_ib_j$ is a $\mathbb{Z}_{\geq0}$-linear combination of elements of $B_0$ by Lemma \ref{theorem}.  Otherwise $[\mathbb{K}_0(d_id_j):\mathbb{K}_0]=2$ as $(d_id_j)^2\in\mathbb{K}_0$ by Proposition \ref{cor:second}.  Hence all $b_k$ with $c_{ij}^k\neq0$ have $d_k\in\mathbb{K}_0$ or $d_k\in\mathbb{K}_0(d_id_j)\setminus\mathbb{K}_0$ by Lemma \ref{theorem}.  We aim to prove the former cannot happen.  To this end, there exists $\sigma\in\mathrm{Gal}(\mathbb{K}_0(d_id_j)/\mathbb{K}_0)$ with $\sigma(\alpha)=-\alpha$ for all $\alpha\in\mathbb{K}_0(d_id_j)\setminus\mathbb{K}_0$ because $\mathbb{K}_0(d_id_j)/\mathbb{K}_0$ is a quadratic Galois extension.  Hence
\begin{equation}
0=d_id_j+\sigma(d_id_j)=2\sum_{d_k\in\mathbb{K}_0}c_{ij}^k d_k.
\end{equation}
As $d_k>0$ for $b_k\in B$, we must have $c_{ij}^k=0$ whenever $d_k\in\mathbb{K}_0$.  There exists at least one $b_k$ with $c_{ij}^k\neq0$ hence $\mathbb{K}_0(d_id_j)=\mathbb{K}_t$ for some $t\in G$, and we have shown $b_ib_j\in B_rB_s$ decomposes into a $\mathbb{Z}_{\geq0}$-linear combination of elements of $B_t$.  Moreover $R$ is faithfully graded by the set $G$ and the above construction defines an associative binary operation on $G$ (i.e. \!$rs:=t$).  Note that for all $b_i\in B$, $b_0b_i=b_i$, and $b_i^2$ is a $\mathbb{Z}$-linear combination of elements of $B_0$ by Lemma \ref{theorem}, so $G$ is a finite group in which every element is order at most 2.  Therefore $G$ is an elementary abelian 2-group.  As $\mathbb{K}_1/\mathbb{K}_0$ is a Galois extension and the group $G$ is describing the set of intermediate subfields of $\mathbb{K}_1/\mathbb{K}_0$, it is clear that $G\cong\mathrm{Gal}(\mathbb{K}_1/\mathbb{K}_0)$.
\end{proof}

If $R$ is a fusion ring, there exists a surjective group homomorphism \cite[Corollary 3.7]{nilgelaki}
\begin{equation}\label{pi}
\pi_{FP}:U(R)\to\mathrm{Gal}(\mathbb{K}_1/\mathbb{K}_0).
\end{equation}
We refer to this faithful grading as the \emph{dimensional grading} for brevity.

\begin{theo}\label{cyclic}
If $R$ is a fusion ring and $U(R)$ is cyclic, then $[\mathbb{K}_1:\mathbb{K}_0]=1,2$.  Moreover if $|U(R)|$ is odd, $\mathbb{K}_0=\mathbb{K}_1$.
\end{theo}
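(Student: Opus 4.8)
The plan is to deduce everything from the surjection \eqref{pi} of Corollary 3.7 of \cite{nilgelaki} together with Proposition \ref{intrograd}. First I would recall that $\pi_{FP}\colon U(R)\to\mathrm{Gal}(\mathbb{K}_1/\mathbb{K}_0)$ is a surjective group homomorphism, so $\mathrm{Gal}(\mathbb{K}_1/\mathbb{K}_0)$ is a quotient of $U(R)$. If $U(R)$ is cyclic, then so is any quotient, hence $\mathrm{Gal}(\mathbb{K}_1/\mathbb{K}_0)$ is cyclic.

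Next I would combine this with Proposition \ref{intrograd}, which tells us $\mathrm{Gal}(\mathbb{K}_1/\mathbb{K}_0)$ is an elementary abelian $2$-group. A group that is simultaneously cyclic and elementary abelian of exponent $2$ is either trivial or isomorphic to $\mathbb{Z}/2\mathbb{Z}$, since a cyclic group of order $m$ has an element of order $m$ while an exponent-$2$ group forces $m\mid 2$. Therefore $[\mathbb{K}_1:\mathbb{K}_0]=|\mathrm{Gal}(\mathbb{K}_1/\mathbb{K}_0)|\in\{1,2\}$, which gives the first assertion.

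For the moreover clause, suppose $|U(R)|$ is odd. Then the quotient $\mathrm{Gal}(\mathbb{K}_1/\mathbb{K}_0)$ also has odd order (its order divides $|U(R)|$ via Lagrange on $\ker\pi_{FP}$). But we have just shown that order is $1$ or $2$; the only odd possibility is $1$, so $\mathrm{Gal}(\mathbb{K}_1/\mathbb{K}_0)$ is trivial and $\mathbb{K}_0=\mathbb{K}_1$.

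There is essentially no obstacle here: the argument is a one-line consequence of the two previously established facts. The only point requiring any care is the elementary observation that a cyclic group of exponent $2$ has order at most $2$, and that quotients preserve both cyclicity and (by Lagrange) the property of having odd order — everything else is bookkeeping with \eqref{pi} and Proposition \ref{intrograd}.
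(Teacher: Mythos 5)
Your argument is correct and matches the paper's proof exactly: both rely on the surjection $\pi_{FP}\colon U(R)\to\mathrm{Gal}(\mathbb{K}_1/\mathbb{K}_0)$ from \eqref{pi}, the fact that quotients of cyclic groups are cyclic, and Proposition~\ref{intrograd} to conclude the Galois group is an elementary abelian $2$-group and hence has order $1$ or $2$, with the odd-order case forcing triviality. You spell out a few elementary group-theoretic details the paper leaves implicit, but the route is identical.
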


\begin{proof}
In the case $U(R)$ is cyclic, the image of $\pi_{FP}$ is a subgroup of $\mathbb{Z}/2\mathbb{Z}$ as any quotient of a cyclic group is cyclic.  In the case $|U(R)|$ is odd, the image of $\pi_{FP}$ is trivial.
\end{proof}

\begin{note}\label{note1}
Assume the image of $\pi_{FP}$ is $\mathbb{Z}/2\mathbb{Z}$.  If $b_j\in R_g$ for some $g\in U(R)$, then $\mathbb{K}_j=\mathbb{K}_1$ if and only if $g$ has even order.  This will be applied meaningfully in Section \ref{sec:quantum}.
\end{note}

\begin{example}\label{exe72}
We will illustrate that the dimensional grading $\pi_{FP}$ does not capture the entire complexity of the fields of Frobenius-Perron dimensions of objects in a fusion ring $R$, even when the hypotheses of Proposition \ref{cyclic} are true.  For example let $S$ be the rank 3 fusion ring $\{1_S,X_S,Y_S\}$ with $X_S^2=1_S+Y_S$, $Y_S^2=1$, and $X_SY_S=Y_SX_S=X_S$, and $T$ be the rank 2 fusion ring $\{1_T,X_T\}$ with $X_T^2=1+X_T$.  The product $R:=S\times T$ is a rank 6 fusion ring with $\pi_R:U(R)\to\mathbb{Z}/2\mathbb{Z}$ an isomorphism.  This grading cannot detect the field $\mathbb{K}_{X_S1_T}=\mathbb{Q}(\sqrt{2})$ because it is a degree 2 extension of a proper subfield of $\mathbb{K}_0$, not of $\mathbb{K}_0$ itself.  It would be of great interest to find an example of this phenomenon which is not a product (see Section \ref{discussion}).
\begin{figure}[H]
\centering
\begin{tikzpicture}[scale=1.2]
\node (a) at (0,0) {$\mathbb{K}_{1_S1_T}=\mathbb{K}_{Y_S1_T}=\mathbb{Q}$};
\node (c) at (5,0) {$\mathbb{K}_{X_S1_T}=\mathbb{Q}(\sqrt{2})$};
\node (d) at (10,0) {$\mathbb{K}_{X_SX_T}=\mathbb{Q}(\sqrt{2},\sqrt{5})$};
\node (b) at (5,-1) {$\mathbb{K}_0=\mathbb{K}_{1_SX_T}=\mathbb{K}_{Y_SX_T}=\mathbb{Q}(\sqrt{5})$};
\draw[->] (a) -- (c);
\draw[->] (c) -- (d);
\draw[->] (a) -- (b);
\draw[->] (b) -- (d);
\end{tikzpicture}
    \caption{$\fpdim$ fields generated by $x\in R$ ($\to$ is inclusion of subfields)}%
    \label{fig:appa}%
\end{figure}

\end{example}

\end{subsection}

\end{section}


\begin{section}{A multiplicative central charge bound}\label{seccharge}

If $\mathcal{C}$ is a modular tensor category, recall the Galois action on the modular data of $\mathcal{C}$ and the definition of $\gamma\in\mathbb{C}$ from Section \ref{sec:mod} such that $\xi(\mathcal{C}):=\gamma^3=p/\sqrt{\dim(\mathcal{C})}$ which is known as the \emph{multiplicative central charge} of $\mathcal{C}$.  The multiplicative central charge $\xi$ is related to the classical central charge $c$ via $\xi=\exp(2\pi i c/8)$.  For example, the multiplicative central charge of $\mathcal{C}(\mathfrak{g},k)$ \cite{kac2013} is given by 
\begin{equation}\label{quantcharge}
\xi(\mathcal{C}(\mathfrak{g},k))=\exp\left(\dfrac{2\pi i}{8}\cdot\dfrac{k\dim\mathfrak{g}}{k+h^\vee}\right).
\end{equation}

Proposition 3(b) of \cite{gannonunpub} and Theorem 6.10 of \cite{dong2015congruence} tell us that if $\mathcal{C}$ is pseudounitary and $\mathbb{K}_1=\mathbb{Q}$, then $\xi(\mathcal{C})^8=1$, or equivalently $c\in\mathbb{Z}$.  Lemma 1.1 permits us to generalize the proof to arbitrary dimension fields.

\par Let $\mathcal{C}$ be a modular tensor category.  For each $X\in\mathcal{O}(\mathcal{C})$, we define
\begin{equation}\label{eqx}
\mathbb{L}_X:=\mathbb{Q}\left(\dfrac{s_{Y,X}}{s_{\mathbbm{1},X}}:Y\in\mathcal{O}(\mathcal{C})\right).
\end{equation}
In particular, $\mathbb{L}_\mathbbm{1}=\mathbb{Q}(\dim(X):X\in\mathcal{O}(\mathcal{C}))$ and $\mathbb{L}_\mathbbm{1}=\mathbb{K}_1$ when $\mathcal{C}$ is pseudounitary.  When $\mathcal{C}$ is not pseudounitary, we still have $\mathbb{K}_1=\mathbb{L}_X$ for some $X\in\mathcal{O}(\mathcal{C})$.  For all $n\in\mathbb{Z}_{\geq1}$ define $f(n)$ to be the largest $m\in\mathbb{Z}_{\geq2}$ such that the exponent of $(\mathbb{Z}/m\mathbb{Z})^\times$ is equal to $n$.

\begin{theo}\label{th:charge}
Assume $\mathcal{C}$ is a modular tensor category, $X\in\mathcal{O}(\mathcal{C})$ and $N\in\mathbb{Z}_{\geq1}$ is the exponent of $\mathrm{Gal}(\mathbb{L}_X/\mathbb{Q})$.  Then $t_X$ is a root of unity of order dividing $f(2N)$.
\end{theo}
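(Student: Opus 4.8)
The plan is to exploit the Galois action on the modular data together with the key identity $\sigma^2(t_X) = t_{\hat\sigma(X)}$ recorded in Section~\ref{sec:mod}. First I would recall that $t_X = \theta_X/\gamma$ is a root of unity (since $\theta_X$ is a root of unity and $\gamma$ has finite multiplicative order, $\gamma^3 = \xi(\mathcal{C})$ being a root of unity as follows from the $SL_2(\mathbb{Z})$-representation having finite image / the Cauchy theorem for modular categories). Write $t_X = \zeta$, a primitive $M$th root of unity; the goal is to show $M \mid f(2N)$, equivalently that the exponent of $(\mathbb{Z}/M\mathbb{Z})^\times = \mathrm{Gal}(\mathbb{Q}(\xi_M)/\mathbb{Q})$ divides $2N$. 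So I must bound how a Galois automorphism of $\mathbb{Q}(\xi_M)$ can act on $\zeta$.

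The heart of the argument: let $\sigma$ be any element of $\mathrm{Gal}(\overline{\mathbb{Q}}/\mathbb{Q})$. The identity $\sigma^2(t_X) = t_{\hat\sigma(X)}$ is the crucial input, but I need to relate $\hat\sigma(X)$ back to $X$. Here is where the hypothesis on $\mathbb{L}_X$ enters. The Galois action on $\mathcal{O}(\mathcal{C})$ is via the $S$-matrix ratios $s_{Y,X}/s_{\mathbbm{1},X}$, and $\mathbb{L}_X$ is precisely the field generated by the $X$th column of these ratios; since $\sigma\bigl(s_{Y,X}/s_{\mathbbm{1},X}\bigr) = s_{Y,\hat\sigma(X)}/s_{\mathbbm{1},\hat\sigma(X)}$ for all $Y$, the permutation $\hat\sigma$ fixes $X$ whenever $\sigma$ fixes $\mathbb{L}_X$ pointwise (the columns of distinct simple objects are distinct, as the $S$-matrix is invertible). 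Thus if $\sigma \in \mathrm{Gal}(\overline{\mathbb{Q}}/\mathbb{L}_X)$ then $\hat\sigma(X) = X$, so $\sigma^2(t_X) = t_X$, i.e. $t_X \in$ the fixed field of $\{\sigma^2 : \sigma \in \mathrm{Gal}(\overline{\mathbb{Q}}/\mathbb{L}_X)\}$. Now descend: for a general $\sigma \in \mathrm{Gal}(\overline{\mathbb{Q}}/\mathbb{Q})$, its restriction to $\mathbb{L}_X$ has order dividing $N$ (the exponent of $\mathrm{Gal}(\mathbb{L}_X/\mathbb{Q})$), so $\sigma^N$ fixes $\mathbb{L}_X$, hence $\sigma^{2N}(t_X) = t_X$ by the previous sentence applied to $\sigma^N$. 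Therefore every $\sigma \in \mathrm{Gal}(\mathbb{Q}(\xi_M)/\mathbb{Q})$ satisfies $\sigma^{2N} = \mathrm{id}$ on $\mathbb{Q}(t_X) = \mathbb{Q}(\xi_M)$, so the exponent of $(\mathbb{Z}/M\mathbb{Z})^\times$ divides $2N$.

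Finally, by the definition of $f$, $M$ is at most $f(2N)$; more precisely, I would argue that if $d$ is any positive integer whose associated group $(\mathbb{Z}/d\mathbb{Z})^\times$ has exponent dividing $2N$, then $d \mid f(2N)$ — this uses that $f(2N)$ is the \emph{largest} such modulus and that the set of such moduli is closed under lcm (since $(\mathbb{Z}/\mathrm{lcm}(d_1,d_2)\mathbb{Z})^\times$ embeds in the product of the $(\mathbb{Z}/d_i\mathbb{Z})^\times$), so $f(2N)$ is in fact their lcm and every such $d$ divides it. Hence $M \mid f(2N)$, which is the claim. The main obstacle I anticipate is the bookkeeping around whether one needs the exponent of $(\mathbb{Z}/M\mathbb{Z})^\times$ to divide $2N$ on the nose versus dividing a possibly larger quantity; the clean statement $\sigma^2(t_X) = t_{\hat\sigma(X)}$ from Section~\ref{sec:mod} makes this work out exactly, but one should be careful that this identity is stated for $\sigma \in \mathrm{Gal}(\mathbb{Q}(\zeta_N')/\mathbb{Q})$ for the conductor $N'$ of $\mathcal{C}$ and extends to all of $\mathrm{Gal}(\overline{\mathbb{Q}}/\mathbb{Q})$ by restriction, which is routine.
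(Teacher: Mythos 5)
Your proof is correct and follows essentially the same route as the paper: fix $X$ under $\hat\sigma$ for $\sigma$ fixing $\mathbb{L}_X$ (the paper does this via the explicit unitarity orthogonality relation, while you appeal to distinctness of normalized $S$-columns from invertibility, which is the same fact), then use $\sigma^2(t_X)=t_{\hat\sigma(X)}$ and descend via $\sigma^N$ to bound the exponent of $\mathrm{Gal}(\mathbb{Q}(t_X)/\mathbb{Q})$ by $2N$. Your closing remark on why $M\mid f(2N)$ via closure under lcm is a useful clarification of what the paper leaves implicit in the definition of $f$.
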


\begin{proof}
Let $\sigma\in\mathrm{Gal}(\overline{\mathbb{Q}}/\mathbb{L}_X)$.  Then $s_{Y,X}/s_{\mathbbm{1},X}=s_{Y,\hat{\sigma}(X)}/s_{\mathbbm{1},\hat{\sigma}(X)}$.  By unitarity of $S$,
\begin{equation}
\dfrac{1}{s_{\mathbbm{1},X}}=\sum_{Y\in\mathcal{O}(\mathcal{C})}\overline{s_{X,Y}}\dfrac{s_{Y,X}}{s_{\mathbbm{1},X}}=\sum_{Y\in\mathcal{O}(\mathcal{C})}\overline{s_{X,Y}}\dfrac{s_{Y,\hat{\sigma}(X)}}{s_{\mathbbm{1},\hat{\sigma}(X)}}=\dfrac{\delta_{X,\hat{\sigma}(X)}}{s_{\mathbbm{1},\hat{\sigma}(X)}}.
\end{equation}
Hence $X=\hat{\sigma}(X)$ for all such $\sigma$.  Moreover $\sigma^2(t_X)=t_{\hat{\sigma}(X)}=t_X$ as well.  If $\sigma\in\mathrm{Gal}(\overline{\mathbb{Q}}/\mathbb{Q})$, $\sigma^N$ fixes $\mathbb{L}_X$, hence $\sigma^{2N}(t_X)=t_X$ and so the exponent of $\mathrm{Gal}(\mathbb{Q}(t_X)/\mathbb{Q})$ must divide $2N$.  Then $t_X$ is a root of unity of order which divides $f(2N)$ by definition of $f(n)$.
\end{proof}

\begin{cor}\label{bettercor}
Let $\mathcal{C}$ be a modular tensor category and let $N\in\mathbb{Z}_{\geq1}$ be the exponent of $\mathrm{Gal}(\mathbb{L}_\mathbbm{1}/\mathbb{Q})$.  Then $\xi(\mathcal{C})$ is a root of unity of order dividing $f(2N)/3$.
\end{cor}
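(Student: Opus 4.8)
The plan is to obtain Corollary \ref{bettercor} as an immediate consequence of Theorem \ref{th:charge} applied to the tensor unit $X=\mathbbm{1}$. First I would recall the normalizations from Section \ref{sec:mod}: since $T=\gamma^{-1}\tilde T$ and $\tilde T$ is the diagonal matrix of twists, we have $t_X=\theta_X/\gamma$, and in particular $t_{\mathbbm{1}}=\theta_{\mathbbm{1}}/\gamma=1/\gamma$ because the twist of the unit object is the identity, $\theta_{\mathbbm{1}}=1$. I would also observe that the field $\mathbb{L}_{\mathbbm{1}}$ of (\ref{eqx}) is exactly the field $\mathbb{Q}(\dim(X):\text{simple }X\in\mathcal{C})$ named $\mathbb{L}_{\mathbbm{1}}$ in the statement of the corollary, since $s_{Y,\mathbbm{1}}/s_{\mathbbm{1},\mathbbm{1}}=\dim(Y)$ for every simple $Y$. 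Thus, with $N$ the exponent of $\mathrm{Gal}(\mathbb{L}_{\mathbbm{1}}/\mathbb{Q})$, Theorem \ref{th:charge} (taken at $X=\mathbbm{1}$) tells us that $\gamma^{-1}=t_{\mathbbm{1}}$ is a root of unity of order dividing $f(2N)$; equivalently $\gamma^{f(2N)}=1$.

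Next I would check that $3\mid f(2N)$, so that $f(2N)/3$ is a positive integer. Write $m:=f(2N)$, the largest $m\geq 2$ with $(\mathbb{Z}/m\mathbb{Z})^\times$ of exponent $2N$, i.e.\ with Carmichael value $\lambda(m)=2N$; this is well-defined since only finitely many $m$ satisfy $\lambda(m)=2N$ (every prime divisor $p$ of such an $m$ has $p-1\mid 2N$, and the prime-power exponents are bounded accordingly). If $3\nmid m$ then $\gcd(3,m)=1$, so $\lambda(3m)=\mathrm{lcm}(\lambda(3),\lambda(m))=\mathrm{lcm}(2,2N)=2N$ because $2N$ is even; this contradicts the maximality of $m$. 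Hence $3\mid f(2N)$.

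Finally, combining these two points gives
\[
\xi(\mathcal{C})^{f(2N)/3}=(\gamma^3)^{f(2N)/3}=\gamma^{f(2N)}=1,
\]
so $\xi(\mathcal{C})$ is a root of unity of order dividing $f(2N)/3$, as claimed. The only mildly delicate ingredient is the elementary number theory around $f$ — well-definedness of $f(2N)$ and the divisibility $3\mid f(2N)$, both of which use only that $2N$ is even — while the rest is a direct application of Theorem \ref{th:charge} together with the definition $\xi(\mathcal{C})=\gamma^3$. I would also remark, as the paragraph following the corollary in the introduction does, that in the pseudounitary case one has $\mathbb{L}_{\mathbbm{1}}=\mathbb{K}_1$, so that Proposition \ref{intrograd} bounds $[\mathbb{K}_1:\mathbb{K}_0]\leq 2$ and yields the refined statement in terms of the exponent of $\mathrm{Gal}(\mathbb{K}_0/\mathbb{Q})$.
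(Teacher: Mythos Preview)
Your proof is correct and is exactly the argument the paper intends: the corollary is stated without proof immediately after Theorem~\ref{th:charge}, and follows by applying that theorem with $X=\mathbbm{1}$, using $t_{\mathbbm{1}}=\gamma^{-1}$ and $\xi(\mathcal{C})=\gamma^3$. Your verification that $3\mid f(2N)$ (implicit in the paper via Lemma~\ref{comp} and the tabulated values) is also correct.
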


\begin{example}
We will compute $f(2N)$ for the first couple $N\in\mathbb{Z}_{\geq1}$ to illustrate the general method (Lemma \ref{comp}).  If $\lambda(m)$ is the exponent of $(\mathbb{Z}/m\mathbb{Z})^\times$ (sometimes referred to the Carmichael lambda function), and $m=\prod_jp_j^{a_j}$ is the prime decomposition of $m$,
\begin{equation}
\lambda(m)=\mathrm{lcm}\{\lambda(p_j^{a_j}):a_j>0\},
\end{equation}
with
\begin{equation}
\lambda(p_j^{a_j})=\left\{\begin{array}{ccc}p_j^{a_j-1}(p_j-1) & : & p_j^{a_j}=2,4\text{ or is odd} \\ p_j^{a_j-2} & : & \text{else}\end{array}\right..
\end{equation}
Thus if $\lambda(m)=2N$, then $\lambda(p_j^{a_j})$ divides $2N$ for all primes $p_j$ dividing $m$.

\par For example if $\lambda(m)=2$, then $\lambda(p_j^{a_j})=1,2$ for each prime power divisor of $m$.  Thus $m$ is not divisible by an odd prime $p>3$ and thus only by $3^1$.  Lastly, $m$ could only be divisible by $2^a$ for $a\leq3$.  Moreover $f(2)=2^3\cdot3^1=24$.  Or if $\lambda(m)=4$, then $\lambda(p_j^{a_j})=1,2,4$ for each prime power divisor of $m$.  Thus $m$ is not divisible by an odd prime $p>5$ and only by $3^1$ and $5^1$.  Lastly, $m$ could only be divisible by $2^a$ for $a\leq4$.  Moreover $f(4)=2^4\cdot3^1\cdot5^1=240$.  The first 10 values of $f(2N)/3$ are given below in Figure \ref{fig:K}.
\end{example}

\begin{lem}\label{comp}
If $N=2^{a_0}\prod_{j\neq0}p_j^{a_j}\in\mathbb{Z}_{\geq1}$, then $f(2N)=2^{a_0+3}\prod_{j\neq0}q_j$ with
\begin{equation}
q_j=\left\{\begin{array}{lcr}1 & : & (p_j-1)\nmid 2N \\ p_j & : &(p_j-1)\mid 2N\text{ and }p_j\nmid 2N \\   p_j^{a_j+1} & : &(p_j-1)\mid 2N\text{ and }p_j\mid 2N \end{array}\right..
\end{equation}
\end{lem}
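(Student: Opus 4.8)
The plan is to pass to the Carmichael function $\lambda(m)$ — the exponent of $(\mathbb{Z}/m\mathbb{Z})^\times$ — so that $f(2N)$ is the largest $m$ with $\lambda(m)=2N$, and then to observe that this maximum is computed one prime at a time. Writing $v_p$ for $p$-adic valuation, the identity $\lambda(m)=\mathrm{lcm}_p\,\lambda(p^{v_p(m)})$ shows that $\lambda(m)\mid 2N$ if and only if $\lambda(p^{v_p(m)})\mid 2N$ for every prime $p$. Let $e_p$ be the largest exponent $b\geq0$ with $\lambda(p^b)\mid 2N$; this is finite for every $p$ (as $\lambda(p^b)\to\infty$) and zero for all but finitely many $p$, and since $\{b:\lambda(p^b)\mid 2N\}$ is a down-set it equals $\{0,1,\ldots,e_p\}$. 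Hence $\lambda(m)\mid 2N$ precisely when $v_p(m)\leq e_p$ for all $p$, i.e. when $m$ divides $M:=\prod_p p^{e_p}$. Since $\lambda(M)\mid 2N$ by construction, any integer realizing $\lambda(m)=2N$ divides $M$, and then (using that $d\mid m$ implies $\lambda(d)\mid\lambda(m)$) we get $2N=\lambda(m)\mid\lambda(M)\mid 2N$, forcing $\lambda(M)=2N$. Thus $f(2N)=M$, and the lemma reduces to evaluating each $e_p$.

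For $p=2$ one uses $\lambda(2)=1$, $\lambda(4)=\lambda(8)=2$, and $\lambda(2^b)=2^{b-2}$ for $b\geq3$; as $v_2(2N)=a_0+1$, the largest admissible $b$ is $a_0+3$, giving the factor $2^{a_0+3}$. For an odd prime $p$ we have $\lambda(p^b)=p^{b-1}(p-1)$, and since $\gcd(p^{b-1},p-1)=1$ this divides $2N$ exactly when $p-1\mid 2N$ and $p^{b-1}\mid 2N$. If $p-1\nmid 2N$, no $b\geq1$ qualifies, so $e_p=0$ and $q_p=1$. If $p-1\mid 2N$ but $p\nmid 2N$ (equivalently $p\nmid N$, $p$ being odd), then $b=1$ qualifies while $b=2$ fails, so $e_p=1$ and $q_p=p$. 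If $p-1\mid 2N$ and $p\mid 2N$, put $a:=v_p(N)=v_p(2N)\geq1$; then $p^{b-1}\mid 2N$ iff $b\leq a+1$, so $e_p=a+1$ and $q_p=p^{a+1}$. Multiplying these contributions over all primes (all but finitely many equal to $1$) yields $f(2N)=2^{a_0+3}\prod_{j\neq0}q_j$.

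The step requiring the most care is the lcm reduction of the first paragraph — confirming that the largest $m$ is obtained by maximizing each prime power independently, and that (under the literal definition, which is meaningful only when $\lambda(m)=2N$ is solvable, since not every even integer lies in the image of $\lambda$) the resulting $M$ genuinely realizes $2N$ rather than a proper divisor. Once that is in place, everything else is routine bookkeeping with the formula for $\lambda$ recalled above, and I do not anticipate a serious difficulty; the worked cases $f(2)=24$ and $f(4)=240$ immediately preceding the lemma already exhibit both the reduction and the evaluation of the $e_p$.
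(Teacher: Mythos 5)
Your proof is correct and follows the same approach the paper sketches (the paper proves the lemma only through the worked example immediately preceding it: compute, for each prime $p$, the largest $b$ with $\lambda(p^b)\mid 2N$, and multiply). Your write-up fills in the details rigorously: the down-set observation, the reduction to $M=\prod_p p^{e_p}$, and the case analysis of $e_p$ via the formula for $\lambda$ all check out.

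One remark on the definitional subtlety you flag. You are right that the literal definition of $f(n)$ — ``largest $m$ with $\lambda(m)=n$'' — is problematic, since for some $n$ (e.g.\ $n=14$, arising from $N=7$) no $m$ satisfies $\lambda(m)=n$, and indeed $\lambda(M)=\lambda(24)=2\neq 14$ there. The operative reading, and the one actually required for Proposition~\ref{th:charge} (where one only knows that the exponent of $\mathrm{Gal}(\mathbb{Q}(t_X)/\mathbb{Q})$ \emph{divides} $2N$, not equals it), is that $f(n)$ is the largest $m$ with $\lambda(m)\mid n$. Under that reading your last step, showing $\lambda(M)=2N$ via the divisibility chain, becomes unnecessary — the down-set argument already shows directly that $M$ is the largest such $m$ — and the lemma is then unconditionally correct. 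So the cleanest version of your argument simply drops the final paragraph's concern by adopting the ``divides'' convention, which is both what the application needs and what makes $f$ total on $\mathbb{Z}_{\geq1}$.
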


\begin{figure}[H]
\centering
\begin{equation*}
\begin{array}{|c|c|c|c|c|c|c|c|c|c|c|}
\hline N & 1 & 2 & 3 & 4 & 5 & 6 & 7 & 8 & 9 & 10 \\\hline
f(2N)/3 & 8 & 80 & 56 &  160 &  88 &  7280 &  8 &  5440 &  1064 &   880 \\\hline
\end{array}
\end{equation*}
    \caption{Order bound for $\xi(\mathcal{C})$ based on exponent $N$ of $\mathrm{Gal}(\mathbb{L}_{\mathbbm{1}}/\mathbb{Q})$}%
    \label{fig:K}%
\end{figure}

\begin{note}\label{chargenote}
When $\mathcal{C}$ is pseudounitary, $\mathbb{L}_\mathbbm{1}=\mathbb{K}_1$ hence the exponent of $\mathrm{Gal}(\mathbb{L}_\mathbbm{1}/\mathbb{Q})$ is equal to, or twice the exponent of $\mathrm{Gal}(\mathbb{K}_0/\mathbb{Q})$ as $\mathrm{Gal}(\mathbb{K}_1/\mathbb{K}_0)$ is an elementary abelian 2-group (refer to the proof of Proposition \ref{intrograd}).  Denote the exponent of $\mathrm{Gal}(\mathbb{K}_0/\mathbb{Q})$ as $M\in\mathbb{Z}_{\geq1}$ for brevity.  Therefore, when $\mathcal{C}$ is pseudounitary, Corollary \ref{bettercor} implies $\xi(\mathcal{C})$ is a root of unity of order dividing $f(2M)/3$ when $\mathrm{Gal}(\mathbb{K}_1/\mathbb{K}_0)$ is trivial.  Otherwise $\xi(\mathcal{C})$ is a root of unity of order dividing $2f(2M)/3$. 
\end{note}

\begin{theo}\label{lemprime}
Let $\mathcal{C}$ be a modular tensor category such that $\mathbb{L}_\mathbbm{1}=\mathbb{K}_1$ (e.g. \!$\mathcal{C}$ pseudounitary) and $N\in\mathbb{Z}_{\geq1}$ the exponent of $\mathrm{Gal}(\mathbb{L}_\mathbbm{1}/\mathbb{Q})$.  If $N=p^b$ for some prime $p$ and $b\in\mathbb{Z}_{\geq1}$, and $2p^a+1$ is not prime for any $1\leq a\leq b$, then $\xi(\mathcal{C})^{16}=1$.  Furthermore, if $\mathbb{L}_\mathbbm{1}=\mathbb{K}_1=\mathbb{K}_0$, then $\xi(\mathcal{C})^8=1$.
\end{theo}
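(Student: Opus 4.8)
The plan is to reduce the statement to the explicit evaluation of $f(2N)$ provided by Lemma \ref{comp} and then invoke Corollary \ref{bettercor}. Since $\mathbb{L}_\mathbbm{1}=\mathbb{K}_1$ and $N$ is the exponent of $\mathrm{Gal}(\mathbb{L}_\mathbbm{1}/\mathbb{Q})$, Corollary \ref{bettercor} already tells us that $\xi(\mathcal{C})$ is a root of unity whose order divides $f(2N)/3=f(2p^{b})/3$; so it suffices to show that under the hypothesis this quantity divides $16$ (and divides $8$ in the ``furthermore'' case). A preliminary observation fixes $p$: since $b\geq 1$, the hypothesis with $a=1$ says $2p+1$ is not prime, and because $2\cdot 2+1=5$ and $2\cdot 3+1=7$ are prime this forces $p\geq 5$ (for $p\in\{2,3\}$ the hypothesis is never satisfied, so the claim is vacuous). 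In particular $2p^{b}$ has $2$-adic valuation $1$ and is coprime to $3$.

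The core is the computation of $f(2p^{b})$ from Lemma \ref{comp} with $N=p^{b}$: here $a_{0}=0$ and $p$ is the unique odd prime dividing $N$, with exponent $b$. The factor $q_j$ attached to $p$ equals $1$, since $(p-1)\mid 2p^{b}$ would force $(p-1)\mid 2$ (as $\gcd(p-1,p)=1$), impossible for $p\geq 5$. For every other odd prime $q$, the corresponding factor equals $1$ unless $(q-1)\mid 2p^{b}$, i.e.\ unless $q-1\in\{1,2,p,2p,\dots,p^{b},2p^{b}\}$. Here $q-1=p^{a}$ gives $q=p^{a}+1$, which is even and larger than $2$, hence not prime; $q-1=2p^{a}$ with $1\leq a\leq b$ gives $q=2p^{a}+1$, which the hypothesis forbids to be prime, while $q-1=2p^{a}$ with $a>b$ does not divide $2p^{b}$; the only surviving case is $q-1=2$, i.e.\ $q=3$, whose factor is $3$ because $3\nmid 2p^{b}$. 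Hence $f(2p^{b})=2^{0+3}\cdot 3=24$, so $f(2p^{b})/3=8$.

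Combining with Corollary \ref{bettercor}, $\xi(\mathcal{C})$ has order dividing $8$; in particular $\xi(\mathcal{C})^{16}=1$, which proves the first assertion, and the ``furthermore'' clause needs nothing more, since in fact $\xi(\mathcal{C})^{8}=1$. (The hypothesis $\mathbb{K}_{1}=\mathbb{K}_{0}$ of the second sentence is moreover automatic here: by Proposition \ref{intrograd} the group $\mathrm{Gal}(\mathbb{K}_{1}/\mathbb{K}_{0})$ is an elementary abelian $2$-group, so its exponent divides the exponent $N=p^{b}$ of $\mathrm{Gal}(\mathbb{K}_{1}/\mathbb{Q})$, which is odd; hence $\mathbb{K}_{1}=\mathbb{K}_{0}$.) The one step that really needs care is the evaluation of $f(2p^{b})$: the product in Lemma \ref{comp} must be read as running over all odd primes — those $q$ with $(q-1)\nmid 2N$ simply contribute the trivial factor $1$ — and one must observe that excluding $2p^{a}+1$ from the primes for every $1\leq a\leq b$, not merely for $a=b$, is precisely what leaves $q=3$ as the sole odd-prime contribution.
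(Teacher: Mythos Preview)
Your argument is correct and follows the same route as the paper: compute $f(2N)=24$ (the paper does this directly via the Carmichael function, you via Lemma~\ref{comp}, which amounts to the same thing) and then invoke Corollary~\ref{bettercor}. Your reading of the product in Lemma~\ref{comp} as ranging over all odd primes is the right one, and your case analysis ruling out every odd prime except $3$ is clean.

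You also notice something the paper's statement obscures: since Corollary~\ref{bettercor} already gives order dividing $f(2N)/3=8$, one gets $\xi(\mathcal{C})^{8}=1$ outright, so the distinction between the ``$16$'' and ``$8$'' conclusions is vacuous here. Your supplementary remark that $\mathbb{K}_1=\mathbb{K}_0$ is automatic (because $\mathrm{Gal}(\mathbb{K}_1/\mathbb{K}_0)$ is an elementary abelian $2$-group sitting inside a group of odd exponent $p^b$) is correct and sharpens the picture. The paper's appeal to Note~\ref{chargenote} appears to anticipate a version of the statement phrased in terms of the exponent of $\mathrm{Gal}(\mathbb{K}_0/\mathbb{Q})$ rather than of $\mathrm{Gal}(\mathbb{K}_1/\mathbb{Q})$; as written, your direct use of Corollary~\ref{bettercor} suffices.
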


\begin{proof}
If $\lambda(m)=2N$ and $m=\prod_jp_j^{a_j}$ as above, then $\lambda(p_j^{a_j})=1,2,p^a,2p^a$ for each prime power divisor of $m$ where $a\in\mathbb{Z}_{\geq1}$ and $1\leq a\leq b$.  Note that $3$ is the only odd prime divisor of $m$ as $p_j-1\neq p^a,2p^a$ for any primes $p_j>3$ by assumption, and $2^3$ is the largest power of $2$ dividing $m$.  Hence $f(2N)=24$ and Corollary \ref{bettercor} implies our result with the comments in Note \ref{chargenote}.
\end{proof}

\begin{note}
Any prime $p\equiv1\pmod{3}$ has $2p^a+1\equiv 0\pmod{3}$ for all $a\in\mathbb{Z}_{\geq1}$, so there are infinitely-many prime powers $N=p^b$ satisfying the hypotheses of Proposition \ref{lemprime}.
\end{note}

\par Proposition \ref{lemprime} in the case $\mathbb{L}_1=\mathbb{K}_1=\mathbb{K}_0$ vastly generalizes the fact that the multiplicative central charge of an integral modular tensor category is an eighth root of unity \cite[Proposition 6.7(ii)]{dong2015congruence} (see also \cite{sommerhauser2013central}).  For readers more familar with the classical (additive) notion of central charge, Proposition \ref{lemprime} implies that the central charge of any modular tensor category of the above type is an integer.

\begin{example}
The bound given by Corollary \ref{bettercor} is tight for $N=1,2,3$, as
\begin{align}
\xi(\mathcal{C}(A_1,2))&=\exp(2\pi i\cdot3/16), \\
\xi(\mathcal{C}(A_1,6)\boxtimes\mathcal{C}(A_2,7))&=\exp(2\pi i\cdot157/160),\text{ and} \\
\xi(\mathcal{C}(A_1,2)\boxtimes\mathcal{C}(A_1,5))&=\exp(2\pi i\cdot51/112).
\end{align}
We do not claim this bound is tight for arbitrary exponents $N\in\mathbb{Z}_{\geq1}$ as it is not clear this would be possible with currently known examples (see Section \ref{discussion}).
\end{example}

\end{section}

\begin{section}{Witt subgroups via dimension fields}

\par The study of algebras in fusion categories is an active area of research due, in part, to its connection to module categories over fusion categories \cite{Ostrik2003}.  Of particular interest in braided fusion categories are \emph{connected \'etale algebras} \cite[Definition 3.1]{DMNO} (sometimes referred to as quantum subgroups), or algebras $A$ in braided fusion categories $\mathcal{C}$ such that $\mathcal{C}_A$, the category of $A$-module objects in $\mathcal{C}$, is a fusion category in its own right.  The $A$-modules whose $A$-bimodule action is indifferent to the braiding of $\mathcal{C}$ form a braided fusion subcategory of $\mathcal{C}_A$ which is nondegenerately braided if and only if $\mathcal{C}$ is (see, for instance, \cite[Section 3]{DMNO}\cite{KiO}).  We denote this fusion subcategory as $\mathcal{C}_A^0\subset\mathcal{C}_A$ and refer to such $A$-modules as local (or dyslectic).  In the language of vertex operator algebras, if $\mathcal{C}$ is the modular tensor category of $\mathcal{V}$-modules, then $\mathcal{C}_A^0$ is the category of $\mathcal{W}$-modules where $\mathcal{W}=A$ is an extension of $\mathcal{V}$.

\par The following definition organizes nondegenerately braided fusion categories by collecting them into equivalence classes of categories which are related by the above construction.

\begin{defi}
Let $\mathcal{C}$ and $\mathcal{D}$ be nondegenerately braided fusion categories.  We say $\mathcal{C}$ and $\mathcal{D}$ are \emph{Witt equivalent} if there exist connected \'etale algebras $A\in\mathcal{C}$ and $B\in\mathcal{D}$ such that $\mathcal{C}_A^0\simeq\mathcal{D}_B^0$ is a braided equivalence.
\end{defi}

The name Witt equivalence is justified as this is an equivalence relation on nondegenerately braided fusion categories, and the equivalence classes of pointed modular tensor categories correspond to the classical Witt group of finite abelian groups with nondegenerate quadratic forms.  We denote Witt equivalence classes by $[\mathcal{C}]$ which form an abelian group $\mathcal{W}$ under Deligne tensor product, i.e. \!$[\mathcal{C}][\mathcal{D}]:=[\mathcal{C}\boxtimes\mathcal{D}]$.  Relations in $\mathcal{W}$ among the classes $[\mathcal{C}(\mathfrak{g},k)]$ in the case $\mathrm{rank}(\mathfrak{g})\leq2$ have been completely classified \cite{DNO,schopieray2017,schopieraysimple}.

\begin{lem}\label{lem:witt}
If $A$ is a connected \'etale algebra in a braided fusion category $\mathcal{C}$, then $\mathrm{FPdim}(\mathcal{C}_A)$ and $\mathrm{FPdim}(\mathcal{C}^0_A)$ lie in $\mathbb{K}_0$.
\end{lem}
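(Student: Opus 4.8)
The plan is to reduce everything to Proposition \ref{zerocor} by exhibiting $\mathrm{FPdim}(\mathcal{C}_A)$ and $\mathrm{FPdim}(\mathcal{C}^0_A)$ as $\mathrm{FPdim}$'s of fusion subrings of a single fusion ring, namely (the Grothendieck ring of) $\mathcal{C}$. The key input is the standard fact from the theory of étale algebras (see \cite{DMNO}, \cite{KiO}) that for a connected étale algebra $A$ in a braided fusion category $\mathcal{C}$ one has the identities
\begin{equation}
\mathrm{FPdim}(\mathcal{C}_A) = \frac{\mathrm{FPdim}(\mathcal{C})}{\mathrm{FPdim}(A)} \qquad\text{and}\qquad \mathrm{FPdim}(\mathcal{C}^0_A) = \frac{\mathrm{FPdim}(\mathcal{C})}{\mathrm{FPdim}(A)^2}.
\end{equation}
Since $\mathrm{FPdim}(\mathcal{C})=\mathrm{FPdim}(R)\in\mathbb{K}_0$ by definition, it suffices to show $\mathrm{FPdim}(A)\in\mathbb{K}_0$, where $R$ is the Grothendieck ring of $\mathcal{C}$.

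First I would recall that $A$, regarded as an object of $\mathcal{C}$, decomposes into simple objects as a $\mathbb{Z}_{\geq 0}$-linear combination of the basis of $R$, with the unit $\mathbbm{1}$ appearing exactly once (connectedness). Then $\mathrm{FPdim}(A)=\mathrm{FPdim}$ of this $\mathbb{Z}_{\geq 0}$-combination, and the object $A$ is a summand of $A\otimes A^{\ast}$ (indeed $A\cong A^\ast$ for a connected étale algebra, and $\mathbbm{1}\subset A\otimes A^\ast$ forces $A\subset A\otimes A^\ast$ via rigidity; alternatively one uses that $A$ is a summand of $A\otimes A$ through the multiplication morphism being split by the unit). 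Applying Lemma \ref{theorem} with $x_1 = A$ (as an element of $R$) and $x_2 = $ the complementary $\mathbb{Z}_{\geq0}$-combination so that $x_1 + x_2 = \sum_{b_j} b_j b_j^\ast$ — or more directly, observing that $A$ lies in the fusion subring $R_{\mathbb{K}_0}$ of Proposition \ref{cor:first} once we know $\mathrm{FPdim}(A)\in\mathbb{K}_0$ — we get the containment. Actually the cleanest route: $A$ is a $\mathbb{Z}_{\geq 0}$-linear combination of basis elements each of which is a summand of $A\otimes A^\ast$, hence (since each simple summand $b_j$ of $A$ has $b_j$ appearing in $\sum_k b_k b_k^\ast$) Lemma \ref{theorem} applied to the splitting $\sum_k b_k b_k^\ast = x_1 + x_2$ with $x_1$ collecting the summands that occur in $A$ shows $\mathrm{FPdim}(x_1)\in\mathbb{K}_0$; but $\mathrm{FPdim}(A)$ is a $\mathbb{Z}_{\geq0}$-combination of the $\mathrm{FPdim}(b_j)$ for $b_j\subset A$, and I will need to also control these individual dimensions — so instead I should argue directly that the fusion subring generated by the simple summands of $A$ has $\mathrm{FPdim}$ in $\mathbb{K}_0$ via Proposition \ref{zerocor}, and note $\mathrm{FPdim}(A)$ need not itself be the $\mathrm{FPdim}$ of a subring.

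The honest approach, then, is: invoke that $\mathcal{C}_A$ is itself a fusion category whose Grothendieck ring is a fusion ring, so $\mathrm{FPdim}(\mathcal{C}_A)$ is a totally positive algebraic integer (Corollary-type facts following \cite{tcat}); and by \cite[Corollary 3.6]{DMNO} (or \cite[Lemma 3.11]{KiO}) the free-module functor $F:\mathcal{C}\to\mathcal{C}_A$ is dominant, so every simple object of $\mathcal{C}_A$ is a summand of $F(X)$ for some simple $X\in\mathcal{C}$, and $\mathrm{FPdim}(F(X)) = \mathrm{FPdim}(X)\cdot\mathrm{FPdim}(A)/\mathrm{FPdim}(A)=\mathrm{FPdim}(X)$ — wait, one has $\mathrm{FPdim}_{\mathcal{C}_A}(F(X)) = \mathrm{FPdim}_{\mathcal C}(X)$ is false in general; rather $\mathrm{FPdim}(\mathcal{C}_A) = \mathrm{FPdim}(\mathcal{C})/\mathrm{FPdim}(A)$ is the relation I should lean on. So the real crux is just $\mathrm{FPdim}(A)\in\mathbb{K}_0$. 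For this I use: $A$ as an object of $\mathcal C$ satisfies that its isotypic components span a fusion subring? No. The clean statement is that $\langle A\rangle$, the fusion subcategory of $\mathcal{C}$ generated by the simple summands of $A$, has $\mathrm{FPdim}(\langle A\rangle)\in\mathbb{K}_0$ by Proposition \ref{zerocor}, and moreover $\mathrm{FPdim}(A) = \mathrm{FPdim}(\langle A\rangle)/\mathrm{FPdim}(A)$ because $A$ is itself a connected étale (in fact Lagrangian-type) algebra inside $\langle A\rangle$ with $\mathcal{C}_A^0\cap\langle A\rangle \simeq \mathrm{Vec}$... this is getting delicate.

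\textbf{The main obstacle}, and where I will have to be careful, is establishing $\mathrm{FPdim}(A)\in\mathbb{K}_0$ cleanly: the naive attempt to write $A$ itself as an element whose $\mathrm{FPdim}$ is governed by Lemma \ref{theorem} does not immediately work because $A$ is not the full $\mathbb{Z}_{\geq 0}$-span appearing in a product $\sum b_j b_j^\ast$. The fix I expect to use: by \cite[Lemma 3.5, Corollary 3.6]{DMNO}, $\mathrm{FPdim}(A)^2 = \mathrm{FPdim}(\mathcal{C})/\mathrm{FPdim}(\mathcal{C}_A^0)$ and, crucially, $A$ generates a fusion \emph{subcategory} $\mathcal{D} \subset \mathcal{C}$ (the image of the forgetful functor, or equivalently $\langle A\rangle$) with $\mathrm{FPdim}(\mathcal D)\in\mathbb K_0$ by Proposition \ref{zerocor}; then $A$ is a connected étale algebra in $\mathcal{D}$ as well, $\mathcal D_A = \mathcal C_A$ restricted appropriately, and an inductive/minimality argument on $\mathrm{FPdim}(\mathcal{D})$ together with the formula $\mathrm{FPdim}(\mathcal{D}_A)=\mathrm{FPdim}(\mathcal D)/\mathrm{FPdim}(A)$ pins down $\mathrm{FPdim}(A)$. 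Once $\mathrm{FPdim}(A)\in\mathbb{K}_0$ is in hand, the two displayed formulas give $\mathrm{FPdim}(\mathcal{C}_A),\mathrm{FPdim}(\mathcal{C}_A^0)\in\mathbb{K}_0$ immediately, completing the proof. I anticipate the authors' actual proof is shorter and simply cites the known identity $\mathrm{FPdim}(A)\cdot\mathrm{FPdim}(\mathcal{C}_A) = \mathrm{FPdim}(\mathcal C)$ from \cite{DMNO} or \cite{KiO} together with the observation that $\mathrm{FPdim}(A)$ equals $\mathrm{FPdim}$ of a fusion subring — likely the fusion subring $\mathcal C_{\mathbb{K}_0}$ of Proposition \ref{cor:first} contains every simple summand of $A$ because étale algebras are closed under the relevant operations — so I would double-check whether in fact \emph{every} simple summand $b_j$ of a connected étale algebra automatically has $\mathrm{FPdim}(b_j)\in\mathbb{K}_0$, which would make the argument a one-liner via Proposition \ref{cor:second} and Lemma \ref{theorem}.
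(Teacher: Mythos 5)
Your proposal does not match the paper's argument and has a genuine gap. The paper's proof is a two-line application of Proposition \ref{zerocor}, but to a \emph{different} ambient fusion ring: one passes to the category of $A$-bimodules $_A\mathcal{C}_A$, which is a fusion category satisfying $\mathrm{FPdim}(_A\mathcal{C}_A)=\mathrm{FPdim}(\mathcal{C})$ (\cite[Corollary 8.14]{DNO}, a Morita-invariance statement). Since the braiding gives embeddings of $\mathcal{C}^0_A\subset\mathcal{C}_A$ as fusion subcategories of $_A\mathcal{C}_A$, Proposition \ref{zerocor} applied to $_A\mathcal{C}_A$ yields $\mathrm{FPdim}(\mathcal{C}_A),\mathrm{FPdim}(\mathcal{C}^0_A)\in\mathbb{Q}(\mathrm{FPdim}(_A\mathcal{C}_A))=\mathbb{K}_0$ immediately, with no need to discuss $\mathrm{FPdim}(A)$ at all.

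Your route has two concrete problems. First, the identity $\mathrm{FPdim}(\mathcal{C}^0_A)=\mathrm{FPdim}(\mathcal{C})/\mathrm{FPdim}(A)^2$ is only valid when $\mathcal{C}$ is nondegenerately braided (it is \cite[Corollary 3.32]{DMNO}), whereas Lemma \ref{lem:witt} is stated for an arbitrary braided fusion category; in a symmetric category such as $\mathrm{Rep}(G)$ with $A$ the regular algebra the formula fails outright. Second, and more fundamentally, you are trying to deduce the lemma from the claim $\mathrm{FPdim}(A)\in\mathbb{K}_0$, but in the paper that claim \emph{is} the subsequent Lemma \ref{lem:witt2}, proved \emph{from} Lemma \ref{lem:witt} together with the nondegeneracy hypothesis; you have the logical dependency reversed, which is precisely why your final paragraph loops without landing. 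The missing idea is to stop working inside the fusion ring of $\mathcal{C}$ itself and instead recognize $\mathcal{C}_A$ and $\mathcal{C}_A^0$ as fusion subcategories of the Morita-dual fusion category $_A\mathcal{C}_A$, whose global $\mathrm{FPdim}$ coincides with that of $\mathcal{C}$; then Proposition \ref{zerocor} does all the work.
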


\begin{proof}
The fusion category of $A$-bimodules $_A\mathcal{C}_A$ is an indecomposable $\mathcal{C}$-module category with $\mathrm{FPdim}(\mathcal{C})=\mathrm{FPdim}(_A\mathcal{C}_A)$ \cite[Corollary 8.14]{DNO}.  And if one observes $\mathcal{C}^0_A\subset\mathcal{C}_A$ are identified with fusion subcategories of $_A\mathcal{C}_A$, the result follows from Proposition \ref{zerocor}.
\end{proof}

\begin{lem}\label{lem:witt2}
If $A$ is a connected \'etale algebra in a nondegenerately braided fusion category $\mathcal{C}$, then $\mathrm{FPdim}(A)\in\mathbb{K}_0$.
\end{lem}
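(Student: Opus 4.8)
The plan is to relate $\mathrm{FPdim}(A)$ to quantities already known to lie in $\mathbb{K}_0$ by Lemma \ref{lem:witt}, using the standard dimension identities for étale algebras in a nondegenerately braided fusion category. Recall that for a connected étale algebra $A$ in such a $\mathcal{C}$, one has the relation $\mathrm{FPdim}(\mathcal{C}_A) = \mathrm{FPdim}(\mathcal{C})/\mathrm{FPdim}(A)$ and $\mathrm{FPdim}(\mathcal{C}^0_A) = \mathrm{FPdim}(\mathcal{C})/\mathrm{FPdim}(A)^2$ (see \cite[Lemma 3.11, Corollary 3.32]{DMNO}, or the corresponding statements via $_A\mathcal{C}_A$). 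The first identity already gives $\mathrm{FPdim}(A) = \mathrm{FPdim}(\mathcal{C})/\mathrm{FPdim}(\mathcal{C}_A)$, and since $\mathrm{FPdim}(\mathcal{C}) \in \mathbb{K}_0$ by definition and $\mathrm{FPdim}(\mathcal{C}_A) \in \mathbb{K}_0$ by Lemma \ref{lem:witt}, we would be done immediately — provided $\mathrm{FPdim}(\mathcal{C}_A) \neq 0$, which holds since Frobenius-Perron dimensions of fusion categories are positive reals.

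First I would recall precisely which identity is available in the literature in the form I need. The cleanest route is: $_A\mathcal{C}_A$ is an indecomposable $\mathcal{C}$-module category with $\mathrm{FPdim}(\mathcal{C}) = \mathrm{FPdim}(_A\mathcal{C}_A)$ (as used in the proof of Lemma \ref{lem:witt}), and by \cite[Corollary 3.3.11 or its analogue]{tcat} on module categories one has $\mathrm{FPdim}(_A\mathcal{C}_A) = \mathrm{FPdim}(\mathcal{C}_A) \cdot \mathrm{FPdim}(A)$; indeed $\mathcal{C}_A$ acts on $_A\mathcal{C}_A$ and the relevant dimension bookkeeping (\cite[Lemma 3.11]{DMNO}) yields $\mathrm{FPdim}(\mathcal{C}) = \mathrm{FPdim}(A)\,\mathrm{FPdim}(\mathcal{C}_A)$. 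Solving for $\mathrm{FPdim}(A)$ and invoking $\mathrm{FPdim}(\mathcal{C}), \mathrm{FPdim}(\mathcal{C}_A) \in \mathbb{K}_0$ gives $\mathrm{FPdim}(A) \in \mathbb{K}_0$ since $\mathbb{K}_0$ is a field. As a sanity cross-check one can also use $\mathrm{FPdim}(\mathcal{C}^0_A) = \mathrm{FPdim}(\mathcal{C})/\mathrm{FPdim}(A)^2$ together with Lemma \ref{lem:witt}, which gives $\mathrm{FPdim}(A)^2 \in \mathbb{K}_0$; this is weaker on its own but confirms consistency.

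The only genuine subtlety — and the step I expect to be the real (though minor) obstacle — is making sure the multiplicativity identity $\mathrm{FPdim}(\mathcal{C}) = \mathrm{FPdim}(A)\,\mathrm{FPdim}(\mathcal{C}_A)$ is cited in a form that applies here: it requires $\mathcal{C}_A$ to be a fusion category (which is exactly the content of $A$ being connected étale) and requires the free module functor $\mathcal{C} \to \mathcal{C}_A$ to behave well with respect to Frobenius-Perron dimension. Both are standard, with $\mathrm{FPdim}(\mathrm{Free}(X)) = \mathrm{FPdim}(X)$ and $\mathrm{FPdim}(\mathcal{C}_A) \cdot \mathrm{FPdim}(A) = \mathrm{FPdim}(\mathcal{C})$ established in \cite[Lemma 3.11]{DMNO} for connected étale $A$. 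Once that citation is in place, the argument is a one-line division inside the field $\mathbb{K}_0$. Note that nondegeneracy of the braiding is not strictly needed for this particular statement — it would follow for any braided fusion category — but we keep the hypothesis as stated since it is the setting of interest and ensures $\mathcal{C}^0_A$ is itself nondegenerate for the cross-check.

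\begin{proof}
Since $A$ is a connected étale algebra, $\mathcal{C}_A$ is a fusion category and, by \cite[Lemma 3.11]{DMNO}, $\mathrm{FPdim}(\mathcal{C}) = \mathrm{FPdim}(A)\cdot\mathrm{FPdim}(\mathcal{C}_A)$. Frobenius-Perron dimensions of fusion categories are positive real numbers, so $\mathrm{FPdim}(\mathcal{C}_A) \neq 0$ and
\begin{equation}
\mathrm{FPdim}(A) = \frac{\mathrm{FPdim}(\mathcal{C})}{\mathrm{FPdim}(\mathcal{C}_A)}.
\end{equation}
By definition $\mathrm{FPdim}(\mathcal{C}) \in \mathbb{K}_0$, and $\mathrm{FPdim}(\mathcal{C}_A) \in \mathbb{K}_0$ by Lemma \ref{lem:witt}. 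As $\mathbb{K}_0$ is a field, the quotient lies in $\mathbb{K}_0$, hence $\mathrm{FPdim}(A) \in \mathbb{K}_0$.
\end{proof}
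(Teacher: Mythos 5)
Your proof is correct and arrives at the same conclusion by a slightly different route. The paper divides the two quantities from Lemma \ref{lem:witt}, using the identity $\mathrm{FPdim}(A)=\mathrm{FPdim}(\mathcal{C}_A)/\mathrm{FPdim}(\mathcal{C}^0_A)$ of \cite[Corollary 3.32]{DMNO}, which is where the nondegeneracy hypothesis enters. You instead use the more basic identity $\mathrm{FPdim}(\mathcal{C})=\mathrm{FPdim}(A)\cdot\mathrm{FPdim}(\mathcal{C}_A)$ of \cite[Lemma 3.11]{DMNO}, so that only the containment $\mathrm{FPdim}(\mathcal{C}_A)\in\mathbb{K}_0$ from Lemma \ref{lem:witt} is needed, together with $\mathrm{FPdim}(\mathcal{C})\in\mathbb{K}_0$ by definition. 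Your observation that this sidesteps the nondegeneracy hypothesis is correct and is a genuine (if modest) strengthening: the statement then holds for any braided fusion category, matching the hypothesis of Lemma \ref{lem:witt} itself. Both proofs are one-line divisions inside the field $\mathbb{K}_0$; yours depends on a lighter input from \cite{DMNO} and extracts marginally more from Lemma \ref{lem:witt}, while the paper's is tailored to the nondegenerate setting it is about to use in the Witt-group discussion.
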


\begin{proof}
The statement follows from Lemma \ref{lem:witt} because $\mathrm{FPdim}(A)=\mathrm{FPdim}(\mathcal{C}_A)/\mathrm{FPdim}(\mathcal{C}^0_A)$ provided that $\mathcal{C}$ is nondegenerate \cite[Corollary 3.32]{DMNO}.
\end{proof}

\begin{note}
Moreso, $\mathrm{FPdim}(A)$ is a totally positive algebraic $d$-number \cite{codegrees}.
\end{note}

\begin{example}\label{tannak}
The most elementary examples of connected \'etale algebras come from \emph{Tannakian categories} i.e. \!braided fusion categories braided equivalent to $\mathrm{Rep}(G)$ for a finite group $G$.  The regular algebra of $\mathrm{Rep}(G)$ has a canonical structure of a connected \'etale algebra \cite[Example 2.8]{DMNO}.
\end{example}

\par There is a well-defined map $\Phi:\mathcal{W}\to\{\text{algebraic number fields}\}$, where
\begin{equation}
[\mathcal{C}]\mapsto\bigcap_{[\mathcal{D}]=[\mathcal{C}]}\mathbb{Q}(\mathrm{FPdim}(\mathcal{D})).
\end{equation}

\par We say $\mathcal{C}$ is \emph{completely anisotropic} if $\mathcal{C}$ contains no nontrivial connected \'etale algebras.  For example, for a given $\mathfrak{g}$, $\mathcal{C}(\mathfrak{g},k)_A^0$ is completely anisotropic for all but finitely many levels $k$ where $A$ is the regular algebra of a maximal Tannakian fusion subcategory of $\mathcal{C}(\mathfrak{g},k)$.

Each Witt equivalence class contains a unique completely anisotropic representative up to braided equivalence \cite[Theorem 5.13]{DMNO}, so one may use the following to compute $\Phi([\mathcal{C}])$ in examples, though the definition of $\Phi$ does not rely on it.

\begin{lem}\label{int}
If $\mathcal{C}$ is a completely anisotropic nondegenerately braided fusion category, then
\begin{equation}\label{eqwitt}
\Phi([\mathcal{C}])=\mathbb{Q}(\mathrm{FPdim}(\mathcal{C})).
\end{equation}
\end{lem}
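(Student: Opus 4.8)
The plan is to prove two inclusions. The inclusion $\mathbb{Q}(\mathrm{FPdim}(\mathcal{C})) \subseteq \Phi([\mathcal{C}])$ is the substantive one and I would handle it first. Let $\mathcal{D}$ be any nondegenerately braided fusion category with $[\mathcal{D}] = [\mathcal{C}]$. By definition of Witt equivalence there are connected \'etale algebras $A \in \mathcal{C}$ and $B \in \mathcal{D}$ with a braided equivalence $\mathcal{C}_A^0 \simeq \mathcal{D}_B^0$. Since $\mathcal{C}$ is assumed completely anisotropic, the only connected \'etale algebra in $\mathcal{C}$ is $\mathbbm{1}$ itself, so $A \cong \mathbbm{1}$ and hence $\mathcal{C}_A^0 \simeq \mathcal{C}$; thus $\mathcal{C} \simeq \mathcal{D}_B^0$ as braided fusion categories. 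Now I would invoke the multiplicativity relation from Lemma~\ref{lem:witt2}'s proof, namely $\mathrm{FPdim}(\mathcal{D}_B) = \mathrm{FPdim}(\mathcal{D}_B^0)\cdot\mathrm{FPdim}(B)$ together with $\mathrm{FPdim}(\mathcal{D}_B) = \mathrm{FPdim}(\mathcal{D})$ (the $B$-bimodule category has the same $\mathrm{FPdim}$ as $\mathcal{D}$, as in Lemma~\ref{lem:witt}), giving
\begin{equation}
\mathrm{FPdim}(\mathcal{C}) = \mathrm{FPdim}(\mathcal{D}_B^0) = \frac{\mathrm{FPdim}(\mathcal{D})}{\mathrm{FPdim}(B)}.
\end{equation}
By Lemma~\ref{lem:witt2} applied inside $\mathcal{D}$ (which is nondegenerate), $\mathrm{FPdim}(B) \in \mathbb{Q}(\mathrm{FPdim}(\mathcal{D}))$, so $\mathrm{FPdim}(\mathcal{C}) \in \mathbb{Q}(\mathrm{FPdim}(\mathcal{D}))$, hence $\mathbb{Q}(\mathrm{FPdim}(\mathcal{C})) \subseteq \mathbb{Q}(\mathrm{FPdim}(\mathcal{D}))$. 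Intersecting over all such $\mathcal{D}$ yields $\mathbb{Q}(\mathrm{FPdim}(\mathcal{C})) \subseteq \Phi([\mathcal{C}])$.

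The reverse inclusion $\Phi([\mathcal{C}]) \subseteq \mathbb{Q}(\mathrm{FPdim}(\mathcal{C}))$ is immediate: $\mathcal{C}$ is itself a nondegenerately braided fusion category with $[\mathcal{C}] = [\mathcal{C}]$, so the intersection defining $\Phi([\mathcal{C}])$ includes the term $\mathbb{Q}(\mathrm{FPdim}(\mathcal{C}))$ and is therefore contained in it. Combining the two inclusions gives Equation~(\ref{eqwitt}).

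The main obstacle I anticipate is verifying that complete anisotropy really forces $A \cong \mathbbm{1}$ in the relevant sense and that the chain of $\mathrm{FPdim}$ identities ($\mathrm{FPdim}(\mathcal{D}_B) = \mathrm{FPdim}(\mathcal{D})$, and the quotient formula for $\mathrm{FPdim}(B)$) is being applied with the correct hypotheses — these rest on \cite[Corollary 8.14]{DNO} and \cite[Corollary 3.32]{DMNO}, both of which require nondegeneracy, which is why the statement restricts to nondegenerately braided $\mathcal{C}$ and $\mathcal{D}$. One should also be slightly careful that $\mathcal{C}_A^0 \simeq \mathcal{C}$ when $A$ is trivial; this is standard but worth stating. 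Everything else is formal manipulation of the already-established lemmas.
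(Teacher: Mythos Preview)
Your proof follows essentially the same route as the paper's: show the trivial inclusion $\Phi([\mathcal{C}])\subseteq\mathbb{Q}(\mathrm{FPdim}(\mathcal{C}))$ by taking $\mathcal{D}=\mathcal{C}$, and for the other inclusion realize $\mathcal{C}$ as $\mathcal{D}_B^0$ for some connected \'etale $B\in\mathcal{D}$, then use Lemmas~\ref{lem:witt}--\ref{lem:witt2} to place $\mathrm{FPdim}(\mathcal{C})$ in $\mathbb{Q}(\mathrm{FPdim}(\mathcal{D}))$. The paper cites \cite[Proposition 5.15(iii)]{DMNO} as a black box for the existence of such a $B$, whereas you unpack this directly from the paper's stated definition of Witt equivalence together with complete anisotropy forcing $A\cong\mathbbm{1}$; both are fine.

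One small slip to fix: the identity $\mathrm{FPdim}(\mathcal{D}_B)=\mathrm{FPdim}(\mathcal{D})$ is not correct. What Lemma~\ref{lem:witt} uses is $\mathrm{FPdim}(_B\mathcal{D}_B)=\mathrm{FPdim}(\mathcal{D})$ for the \emph{bimodule} category; for one-sided modules one has $\mathrm{FPdim}(\mathcal{D}_B)=\mathrm{FPdim}(\mathcal{D})/\mathrm{FPdim}(B)$, so that in fact $\mathrm{FPdim}(\mathcal{D}_B^0)=\mathrm{FPdim}(\mathcal{D})/\mathrm{FPdim}(B)^2$, not $\mathrm{FPdim}(\mathcal{D})/\mathrm{FPdim}(B)$. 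This does not damage your argument, since either formula combined with $\mathrm{FPdim}(B)\in\mathbb{Q}(\mathrm{FPdim}(\mathcal{D}))$ (Lemma~\ref{lem:witt2}) yields $\mathrm{FPdim}(\mathcal{C})\in\mathbb{Q}(\mathrm{FPdim}(\mathcal{D}))$; indeed you can bypass the computation entirely and cite Lemma~\ref{lem:witt} directly for $\mathrm{FPdim}(\mathcal{D}_B^0)\in\mathbb{Q}(\mathrm{FPdim}(\mathcal{D}))$, which is exactly what the paper does.
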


\begin{proof}
It is clear the left-hand side of (\ref{eqwitt}) is contained in the right.  If $[\mathcal{D}]=[\mathcal{C}]$ and $\mathcal{C}$ is completely anisotropic, there exists a connected \'etale algebra $A\in\mathcal{D}$ such that $\mathcal{D}^0_A\simeq\mathcal{C}$ is a braided equivalence \cite[Proposition 5.15(iii)]{DMNO}.  Hence $\mathrm{FPdim}(\mathcal{C})=\mathrm{FPdim}(\mathcal{D}^0_A)\in\mathbb{Q}(\mathrm{FPdim}(\mathcal{D}))$ by Lemma \ref{lem:witt2}.
\end{proof}
 
\par Fix an algebraic number field $\mathbb{K}$.  We define
\begin{equation}\label{witteq}
\mathcal{W}_\mathbb{K}:=\{[\mathcal{C}]\in\mathcal{W}:\Phi([\mathcal{C}])\subset\mathbb{K}\}.
\end{equation}
The following are clear from the definition of $\mathcal{W}_\mathbb{K}$.

\begin{lem}\label{fieldint}
For all algebraic number fields $\mathbb{K},\mathbb{L}$,
\begin{itemize}
\item[(a)] $\mathcal{W}_\mathbb{K}\subset\mathcal{W}_\mathbb{L}$ if $\mathbb{K}\subset\mathbb{L}$,
\item[(b)] $\mathcal{W}_\mathbb{Q}\subset\mathcal{W}_\mathbb{K}$,\text{ and } 
\item[(c)] $\mathcal{W}_\mathbb{K}\cap\mathcal{W}_\mathbb{L}=\mathcal{W}_{\mathbb{K}\cap\mathbb{L}}$.
\end{itemize}
\end{lem}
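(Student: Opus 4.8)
The plan is to deduce all three parts directly from the definition $\mathcal{W}_\mathbb{K}=\{[\mathcal{C}]\in\mathcal{W}:\Phi([\mathcal{C}])\subset\mathbb{K}\}$, using only that the predicate ``$\Phi([\mathcal{C}])\subset\mathbb{K}$'' is monotone in $\mathbb{K}$ and interacts well with intersections of fields. No categorical input beyond the well-definedness of $\Phi$ is needed.

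For (a): if $\mathbb{K}\subset\mathbb{L}$ and $[\mathcal{C}]\in\mathcal{W}_\mathbb{K}$, then $\Phi([\mathcal{C}])\subset\mathbb{K}\subset\mathbb{L}$, so $[\mathcal{C}]\in\mathcal{W}_\mathbb{L}$. Part (b) is then the special case of (a) with the smaller field taken to be $\mathbb{Q}$, since $\mathbb{Q}$ is the prime subfield of every algebraic number field. For (c): for any $[\mathcal{C}]\in\mathcal{W}$ one has $[\mathcal{C}]\in\mathcal{W}_\mathbb{K}\cap\mathcal{W}_\mathbb{L}$ iff $\Phi([\mathcal{C}])\subset\mathbb{K}$ and $\Phi([\mathcal{C}])\subset\mathbb{L}$, iff $\Phi([\mathcal{C}])\subset\mathbb{K}\cap\mathbb{L}$, iff $[\mathcal{C}]\in\mathcal{W}_{\mathbb{K}\cap\mathbb{L}}$; the middle equivalence is the elementary fact that a set is contained in two fields precisely when it is contained in their intersection.

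Honestly, there is no real obstacle: the statement is purely formal once $\Phi$ is in hand. The one point worth recording is that these comparisons make sense, i.e.\ that $\Phi([\mathcal{C}])$ is genuinely an algebraic number field (so that $\mathbb{K}\cap\mathbb{L}$ is again one and (c) is a statement about number fields). This is guaranteed by Lemma \ref{int}: for the completely anisotropic representative $\mathcal{C}_0$ of $[\mathcal{C}]$ we have $\Phi([\mathcal{C}])=\mathbb{Q}(\mathrm{FPdim}(\mathcal{C}_0))$, a finite extension of $\mathbb{Q}$; alternatively, the intersection defining $\Phi([\mathcal{C}])$ is contained in any single $\mathbb{Q}(\mathrm{FPdim}(\mathcal{D}))$, which is already a number field, hence is itself finite over $\mathbb{Q}$. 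I would also flag that this lemma says nothing about $\mathcal{W}_\mathbb{K}$ being closed under the group operation of $\mathcal{W}$: that is the separate and substantially deeper Proposition \ref{wittt}, whose proof is not mere definition-chasing.
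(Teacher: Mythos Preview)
Your proof is correct and matches the paper's approach exactly: the paper simply states that all three parts ``are clear from the definition of $\mathcal{W}_\mathbb{K}$,'' and what you have written is precisely the explicit definition-chase that justifies this claim. Your additional remarks about $\Phi([\mathcal{C}])$ being a genuine number field and about the subgroup property being deferred to Proposition \ref{wittt} are accurate and helpful context, though not strictly needed for the lemma as stated.
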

Most importantly, $\mathcal{W}_\mathbb{K}$ is closed under products by Lemma \ref{lem:witt2}, proving the following.
\begin{theo}\label{wittt}
Let $\mathbb{K}$ be an algebraic number field.  The set $\mathcal{W}_\mathbb{K}:=\{[\mathcal{C}]\in\mathcal{W}:\Phi([\mathcal{C}])\subset\mathbb{K}\}$
is a Galois-invariant subgroup of $\mathcal{W}$.
\end{theo}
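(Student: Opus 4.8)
The plan is to establish the two assertions in turn: that $\mathcal{W}_{\mathbb{K}}$ is a subgroup of $\mathcal{W}$, and that it is preserved by the Galois action on $\mathcal{W}$. Throughout I would represent each Witt class $[\mathcal{C}]$ by its completely anisotropic representative $\mathcal{C}_0$, unique up to braided equivalence by \cite[Theorem 5.13]{DMNO}; by Lemma \ref{int} this turns the defining condition into the single scalar requirement $\mathrm{FPdim}(\mathcal{C}_0)\in\mathbb{K}$, so that $\mathcal{W}_{\mathbb{K}}=\{[\mathcal{C}]\in\mathcal{W}:\mathrm{FPdim}(\mathcal{C}_0)\in\mathbb{K}\}$.

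For the subgroup claim there are three points. The trivial class is represented by $\mathrm{Vec}$, which is completely anisotropic with $\mathrm{FPdim}=1$, so $0\in\mathcal{W}_{\mathbb{K}}$. For inverses, $[\mathcal{C}]^{-1}=[\overline{\mathcal{C}}]$, where $\overline{\mathcal{C}}$ is $\mathcal{C}$ with reversed braiding; reversing the braiding leaves the underlying fusion ring — hence every Frobenius-Perron dimension, and hence the value of $\Phi$ — unchanged, so $\mathcal{W}_{\mathbb{K}}$ is closed under inversion. The substance is closure under products: given $[\mathcal{C}],[\mathcal{D}]\in\mathcal{W}_{\mathbb{K}}$ with completely anisotropic representatives $\mathcal{C}_0,\mathcal{D}_0$, the category $\mathcal{C}_0\boxtimes\mathcal{D}_0$ is nondegenerately braided and represents $[\mathcal{C}\boxtimes\mathcal{D}]$, so
\[
\Phi([\mathcal{C}\boxtimes\mathcal{D}])\ \subseteq\ \mathbb{Q}\bigl(\mathrm{FPdim}(\mathcal{C}_0\boxtimes\mathcal{D}_0)\bigr)\ =\ \mathbb{Q}\bigl(\mathrm{FPdim}(\mathcal{C}_0)\,\mathrm{FPdim}(\mathcal{D}_0)\bigr)\ \subseteq\ \mathbb{K},
\]
using multiplicativity of $\mathrm{FPdim}$ under $\boxtimes$ together with $\mathrm{FPdim}(\mathcal{C}_0)\in\Phi([\mathcal{C}])\subseteq\mathbb{K}$ and likewise for $\mathcal{D}_0$ (this last containment is exactly Lemma \ref{int}, whose proof invokes Lemma \ref{lem:witt2}). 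Hence $[\mathcal{C}\boxtimes\mathcal{D}]\in\mathcal{W}_{\mathbb{K}}$.

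For Galois invariance, $\mathrm{Gal}(\overline{\mathbb{Q}}/\mathbb{Q})$ acts on $\mathcal{W}$ by $[\mathcal{C}]\mapsto[\mathcal{C}^{\sigma}]$, where $\mathcal{C}^{\sigma}$ is the $\sigma$-conjugate category; this is well defined on $\mathcal{W}$ because $\sigma$-conjugation respects Deligne products, local modules over connected étale algebras, and braided equivalence. I would first note that $\sigma$-conjugation carries connected étale algebras to connected étale algebras, hence a completely anisotropic category to a completely anisotropic one, so $(\mathcal{C}^{\sigma})_0=(\mathcal{C}_0)^{\sigma}$ and $\Phi([\mathcal{C}^{\sigma}])=\sigma\bigl(\Phi([\mathcal{C}])\bigr)$. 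But $\Phi([\mathcal{C}])$, being generated by Frobenius-Perron dimensions, is real and cyclotomic — in particular normal — over $\mathbb{Q}$ by \cite[Corollary 8.54]{DNO}, so $\sigma\bigl(\Phi([\mathcal{C}])\bigr)=\Phi([\mathcal{C}])$ for every $\sigma$. Therefore $\Phi([\mathcal{C}^{\sigma}])=\Phi([\mathcal{C}])$ and $[\mathcal{C}]\in\mathcal{W}_{\mathbb{K}}\iff[\mathcal{C}^{\sigma}]\in\mathcal{W}_{\mathbb{K}}$, i.e.\ $\mathcal{W}_{\mathbb{K}}$ is Galois-invariant.

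The subgroup axioms and the normality conclusion are immediate once Lemmas \ref{int} and \ref{lem:witt2} and \cite[Corollary 8.54]{DNO} are available, so I expect the only genuine obstacle to be the bookkeeping in the Galois-invariance step: one must pin down the Galois action on $\mathcal{W}$ and verify that $\Phi$ is equivariant for it, being careful that $\mathrm{FPdim}$ is defined through Perron eigenvalues of fusion matrices rather than through modular data, so that its behaviour under $\sigma$-conjugation (equivalently, the fact that $\mathcal{C}_0$ and $(\mathcal{C}_0)^{\sigma}$ share a fusion ring up to relabelling) needs to be checked explicitly rather than taken for granted.
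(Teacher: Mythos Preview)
Your argument is correct and matches the paper's approach: closure under products via multiplicativity of $\mathrm{FPdim}$ under $\boxtimes$ (the paper cites Lemma~\ref{lem:witt2}, which is the engine behind the Lemma~\ref{int} you invoke), and Galois invariance because $\Phi$ is determined by the Grothendieck ring alone.

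One caveat on the Galois step: the intermediate equality $\Phi([\mathcal{C}^{\sigma}])=\sigma\bigl(\Phi([\mathcal{C}])\bigr)$ is not quite right, and the subsequent appeal to cyclotomy via \cite[Corollary 8.54]{DNO} is an unnecessary detour. Galois conjugation acts on the linear data (associators, braidings) but leaves the fusion rules untouched, so $\mathcal{C}^{\sigma}$ and $\mathcal{C}$ share the same fusion ring and $\mathrm{FPdim}\bigl((\mathcal{C}_0)^{\sigma}\bigr)=\mathrm{FPdim}(\mathcal{C}_0)$ on the nose; hence $\Phi([\mathcal{C}^{\sigma}])=\Phi([\mathcal{C}])$ directly. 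This is exactly the paper's one-line justification (``inclusion in $\mathcal{W}_{\mathbb{K}}$ is defined only on the level of Grothendieck rings''), and it is also what you yourself flag in your closing paragraph when you note that $\mathcal{C}_0$ and $(\mathcal{C}_0)^{\sigma}$ share a fusion ring. Your conclusion is unaffected since normality repairs the detour, but the direct route is cleaner.
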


\begin{note}
The Galois-invariance of the subgroups $\mathcal{W}_\mathbb{K}$ is clear as inclusion in $\mathcal{W}_\mathbb{K}$ is defined only on the level of Grothendieck rings.  Hence any structure of a nondegenerately braided (or modular) fusion category on a fixed fusion ring describes elements of the same $\mathcal{W}_\mathbb{K}$.
\end{note}

\par Recall the notion of \emph{slightly degenerate} braided fusion categories, i.e. \!those braided fusion categories $\mathcal{C}$ with symmetric center braided equivalent to $\mathrm{sVec}$, the category of super vector spaces.  The analogous notions to the Witt group can be generalized \cite[Section 5]{DNO} to define $s\mathcal{W}$, the \emph{super Witt group} of slightly degenerate braided fusion categories.  We will show each super Witt equivalence class contains a representative whose dimensional grading is trivial.

\begin{theo}\label{genthm}
Let $\mathcal{C}$ be a slightly degenerate braided fusion category.  If $\mathcal{C}$ has no nontrivial Tannakian subcategories, then $\mathrm{FPdim}(X)\in\mathbb{K}_0$ for all $X\in\mathcal{C}$.
\end{theo}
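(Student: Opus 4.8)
The plan is to reduce the statement to an application of Proposition~\ref{zerocor} together with the structure of slightly degenerate braided fusion categories. The key point is that in a slightly degenerate category $\mathcal{C}$, the symmetric center is $\mathrm{sVec}=\mathrm{Rep}(\mathbb{Z}/2\mathbb{Z},z)$, generated by a single invertible object $\delta$ with $\delta^{\otimes 2}\cong\mathbbm{1}$ and $\dim(\delta)=1$ but with the nontrivial braiding sign. Since $\mathcal{C}$ has no nontrivial Tannakian subcategory, the only symmetric subcategory is $\mathrm{sVec}$ itself (a larger symmetric subcategory would either contain a Tannakian piece or force degeneracy). First I would recall that $\mathcal{C}_{\mathrm{pt}}$, the pointed subcategory, meets the symmetric center in exactly $\langle\delta\rangle\cong\mathbb{Z}/2\mathbb{Z}$ or trivially, and in either case $\mathrm{FPdim}(\mathcal{C}_\mathrm{pt})\in\mathbb{K}_0$ by Proposition~\ref{zerocor} — but this alone does not control all simple objects, so the real work is elsewhere.

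The substantive step is to show that the dimensional grading of $\mathcal{C}$ (Proposition~\ref{intrograd}) is trivial, i.e. that every simple $X$ has $\mathrm{FPdim}(X)\in\mathbb{K}_0$. By Proposition~\ref{intrograd} the grading group is $G:=\mathrm{Gal}(\mathbb{K}_1/\mathbb{K}_0)$, an elementary abelian $2$-group, and by Equation~\eqref{pi} it is a quotient of the universal grading group $U(\mathcal{C})$. For a modular — hence, more generally, for a nondegenerate — category, $U(\mathcal{C})$ is dual to $\mathcal{O}(\mathcal{C}_\mathrm{pt})$; for a slightly degenerate category one has an analogous description where $U(\mathcal{C})$ is controlled by the pointed part together with the symmetric center. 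The idea is: if the dimensional grading were nontrivial, there would be a nontrivial character $\chi\colon U(\mathcal{C})\to\mathbb{Z}/2\mathbb{Z}$ factoring through $\pi_{FP}$, and dualizing this produces a nontrivial invertible object $g\in\mathcal{C}_\mathrm{pt}$ that is "responsible" for the grading. I would then argue that such $g$ lies in the symmetric center: the dimensional grading is by $\mathrm{Gal}(\mathbb{K}_1/\mathbb{K}_0)$, and the obstruction to $\mathrm{FPdim}(X)\in\mathbb{K}_0$ for a simple $X$ is governed by the Müger center via the double braiding / monodromy pairing, so the grading-defining simple current must be transparent, i.e. central. But the only nontrivial transparent invertible object is $\delta$, and $\delta$ has $\mathrm{FPdim}(\delta)=1\in\mathbb{K}_0$, so $\delta$ lies in the trivial component of the dimensional grading. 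This forces the character $\chi$ to be trivial, a contradiction, hence $G$ is trivial and $\mathbb{K}_1=\mathbb{K}_0$, which is exactly the claim.

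An alternative, and perhaps cleaner, route avoids the grading-group bookkeeping: pass to the Müger centralizer picture directly. Write $\mathcal{C}$ as slightly degenerate with center $\mathrm{sVec}=\langle\delta\rangle$. Consider the $\mathbb{Z}/2\mathbb{Z}$-equivariantization or, dually, note that $\mathcal{C}$ contains $\mathcal{C}_\mathrm{ad}$, the adjoint subcategory, whose FPdim lies in $\mathbb{K}_0$ by Proposition~\ref{zerocor}, and that $U(\mathcal{C})$ acts with $\mathcal{C}_\mathrm{ad}$ as trivial component. Each nontrivial graded component is a $\mathcal{C}_\mathrm{ad}$-module, and for the dimensional grading the relevant fact is that a simple $X$ has $\mathrm{FPdim}(X)\notin\mathbb{K}_0$ only when its component under $U(\mathcal{C})\to\mathrm{Gal}(\mathbb{K}_1/\mathbb{K}_0)$ is nontrivial; since the target is elementary abelian $2$, one shows the kernel of $\pi_{FP}$ must contain the class of $\delta$ and, using that $\mathcal{C}/\langle\delta\rangle$ (de-equivariantization by the transparent $\delta$ is not available, but one can still use that everything outside $\mathrm{sVec}$ is "genuinely braided") has no further transparent invertibles, deduce $\pi_{FP}$ is trivial. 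The main obstacle in either approach is the second one: rigorously identifying the simple current that defines the dimensional grading with a transparent object — i.e. proving that the only way $\mathbb{K}_1$ can strictly exceed $\mathbb{K}_0$ for a slightly degenerate category with no Tannakian subcategory is via the $\mathrm{sVec}$ generator, which itself has dimension $1$. I expect this to require invoking the precise relationship between the universal grading group, the pointed subcategory, and the symmetric center in the (non)degenerate setting, as in the references to \cite{nilgelaki} and \cite{DNO}, rather than any new estimate.
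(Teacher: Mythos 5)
Your proposal has a genuine gap, and it is exactly at the step you label the ``main obstacle.'' The principle you propose — that the simple current defining the dimensional grading must be transparent, i.e.\ lie in the M\"uger center — is not a theorem and is in fact false. Consider $\mathcal{C}(\mathfrak{sl}_2,2)$: its dimensional grading is nontrivial ($\mathbb{K}_0=\mathbb{Q}$ but $\mathbb{K}_1=\mathbb{Q}(\sqrt{2})$, witnessed by the object of dimension $\sqrt{2}$), and the simple current $\psi$ with $\psi^{\otimes 2}\cong\mathbbm{1}$ defines the universal grading and hence the dimensional grading. Yet $\mathcal{C}(\mathfrak{sl}_2,2)$ is modular, so its M\"uger center is trivial and $\psi$ is certainly not transparent. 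The paper explicitly records this category as a counterexample showing the statement of the proposition fails in the nondegenerate setting. The ``obstruction'' to $\mathrm{FPdim}(X)\in\mathbb{K}_0$ is a purely arithmetic condition on dimensions and has no a priori relation to transparency; nothing in your argument uses the ``no nontrivial Tannakian subcategory'' hypothesis in a way that would force the grading-defining simple current into $\mathrm{sVec}$. Your ``alternative, cleaner route'' suffers from the same unjustified identification.

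The paper's proof proceeds along a different line and relies on two specific theorems you do not invoke. First, by Davydov--Nikshych--Ostrik \cite[Theorem 4.13(1)]{DNO}, a slightly degenerate braided fusion category decomposes as
\[
\mathcal{C}\simeq\mathcal{C}_{\mathrm{pt}}\boxtimes_{\mathrm{sVec}}\mathcal{C}_1\boxtimes_{\mathrm{sVec}}\cdots\boxtimes_{\mathrm{sVec}}\mathcal{C}_m
\]
into $s$-simple pieces, and by \cite[Theorem 4.13(2)]{DNO} every fusion subcategory of $\mathcal{C}$ is a subproduct. Applying this to the trivial component $\mathcal{C}_{\mathbb{K}_0}$ of the dimensional grading shows that any factor $\mathcal{C}_j\not\subset\mathcal{C}_{\mathbb{K}_0}$ satisfies $\mathcal{C}_j\cap\mathcal{C}_{\mathbb{K}_0}=\mathrm{sVec}$. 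But for simple $X\in\mathcal{C}_j$ one has $\mathrm{FPdim}(X)^2\in\mathbb{K}_0$ by Proposition~\ref{cor:second}, hence $X\otimes X^\ast\in\mathcal{C}_{\mathbb{K}_0}$ by Lemma~\ref{theorem}, forcing $X\otimes X^\ast\in\mathrm{sVec}$. Either $X$ is invertible (contradiction, since then $\mathrm{FPdim}(X)=1\in\mathbb{K}_0$), or $X\otimes X^\ast\cong\mathbbm{1}\oplus\delta$ with $X$ $\otimes$-generating the $s$-simple $\mathcal{C}_j$, which then has rank $3$. The second essential ingredient is that slightly degenerate premodular categories of rank $3$ do not exist, by the Main Theorem of \cite{premodular}. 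This is where the real content lives, and your approach never reaches it.
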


\begin{proof}
There exist $s$-simple fusion subcategories $\mathcal{C}_1,\ldots,\mathcal{C}_m\subset\mathcal{C}$ determined uniquely up to a permutation of indices so $\mathcal{C}\simeq\mathcal{C}_\mathrm{pt}\boxtimes_\mathrm{sVec}\mathcal{C}_1\boxtimes_\mathrm{sVec}\cdots\boxtimes_\mathrm{sVec}\mathcal{C}_m$ \cite[Theorem 4.13(1)]{DNO}.  Also consider the fusion subcategory $\mathcal{C}_{\mathbb{K}_0}\subset\mathcal{C}$ of all $X\in\mathcal{C}$ with $\mathrm{FPdim}(X)\in\mathbb{K}_0$ (the trivial component of the dimensional grading) which, by \cite[Theorem 4.13(2)]{DNO}, has
\begin{equation}
\mathcal{D}_{\mathbb{K}_0}\simeq\mathcal{D}_\mathrm{pt}\boxtimes_\mathrm{sVec}\mathcal{D}_{i_1}\boxtimes_\mathrm{sVec}\cdots\boxtimes_\mathrm{sVec}\mathcal{D}_{i_k}
\end{equation}
for a subset $\{i_1,\ldots,i_k\}\subset\{1,\ldots,m\}$. Assume $j\in\{1,\ldots,m\}\backslash\{i_1,\ldots,i_k\}$.  For such $j$ and any $X\in\mathcal{O}(\mathcal{C}_j)$, $\mathrm{FPdim}(X)^2\in\mathbb{K}_0$ by Proposition \ref{cor:second}.  But $\mathcal{C}_j\cap\mathcal{C}_{\mathbb{K}_0}=\mathrm{sVec}$.  Hence $X\in(\mathcal{C}_j)_\mathrm{pt}\simeq\mathrm{sVec}$ or $X$ $\otimes$-generates $\mathcal{C}_j$ with $X\otimes X^\ast\in(\mathcal{C}_j)_\mathrm{pt}\simeq\mathrm{sVec}$.  The latter would imply $\mathcal{C}_j$ is a slightly degenerate (pre-modular) category of rank 3 because $\mathcal{C}_j$ is $s$-simple, which does not exist \cite[Main Theorem]{premodular}.  Moreover no such $j$ exists, and $\mathcal{C}_{\mathbb{K}_0}=\mathcal{C}$.
\end{proof}

\begin{cor}\label{corint}
Let $\mathcal{C}$ be a slightly degenerate braided fusion category.  There exists a slightly degenerate braided fusion category $\mathcal{D}$ with $[\mathcal{C}]=[\mathcal{D}]\in s\mathcal{W}$ and $\mathrm{FPdim}(X)\in\mathbb{Q}(\mathrm{FPdim}(\mathcal{D}))$ for all $X\in\mathcal{O}(\mathcal{D})$.
\end{cor}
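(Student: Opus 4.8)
The plan is to produce a representative of the super Witt class $[\mathcal{C}]$ to which Proposition \ref{genthm} applies --- namely one with no nontrivial Tannakian subcategory --- and then simply quote that proposition. First I would recall that $s\mathcal{W}$ is constructed, exactly as the Witt group in \cite[Section 3]{DMNO}, from the operation $\mathcal{E}\mapsto\mathcal{E}_A^0$ on connected \'etale algebras: for any connected \'etale algebra $A$ in a slightly degenerate braided fusion category $\mathcal{E}$, the category $\mathcal{E}_A^0$ of local modules is again slightly degenerate and $[\mathcal{E}_A^0]=[\mathcal{E}]$ in $s\mathcal{W}$ \cite[Section 5]{DNO}. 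Suppose then that $\mathcal{C}$ contains a nontrivial Tannakian subcategory $\mathrm{Rep}(G)$ with $G\neq1$, and let $A$ be its regular algebra, which is connected \'etale by Example \ref{tannak}. Then $\mathcal{C}_1:=\mathcal{C}_A^0$ is slightly degenerate with $[\mathcal{C}_1]=[\mathcal{C}]$, and the slightly degenerate analogue of Lemma \ref{lem:witt2} gives $\mathrm{FPdim}(\mathcal{C}_1)=\mathrm{FPdim}(\mathcal{C})/|G|^2\leq\mathrm{FPdim}(\mathcal{C})/4$.

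Next I would iterate: as long as the current category $\mathcal{C}_i$ still has a nontrivial Tannakian subcategory, replace $\mathcal{C}_i$ by $\mathcal{C}_{i+1}:=(\mathcal{C}_i)_{A_i}^0$, where $A_i$ is the regular algebra of such a subcategory. Each step divides $\mathrm{FPdim}$ by an integer that is at least $4$, yet $\mathrm{FPdim}(\mathcal{C}_i)\geq1$ for every $i$; hence the iteration terminates after finitely many steps at a slightly degenerate braided fusion category $\mathcal{D}$ with $[\mathcal{D}]=[\mathcal{C}]$ in $s\mathcal{W}$ and with no nontrivial Tannakian subcategory. (Alternatively one could invoke the existence of a completely anisotropic representative of $[\mathcal{C}]$, whose proof is this same dimension induction.) Proposition \ref{genthm} applied to $\mathcal{D}$ then yields $\mathrm{FPdim}(X)\in\mathbb{K}_0=\mathbb{Q}(\mathrm{FPdim}(\mathcal{D}))$ for all $X\in\mathcal{O}(\mathcal{D})$, which is exactly the assertion.

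I expect the only delicate point to be the first step: one must verify that de-equivariantizing a slightly degenerate category by the regular algebra of a \emph{Tannakian} (as opposed to super-Tannakian) subcategory neither destroys nor enlarges the copy of $\mathrm{sVec}$ in the symmetric center, so that $\mathcal{C}_A^0$ stays slightly degenerate rather than becoming non-degenerate or more degenerate. Morally this holds because $\mathrm{sVec}$ admits no nontrivial Tannakian subcategory, so $A$ is disjoint from the super part of the symmetric center and the fermion survives intact to $\mathcal{C}_A^0$; but it should be quoted carefully from the construction of $s\mathcal{W}$ in \cite[Section 5]{DNO} together with the non-degeneracy statements of \cite[Section 3]{DMNO}. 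The remaining ingredients --- the dimension count forcing termination, and the invocation of Proposition \ref{genthm} --- are routine.
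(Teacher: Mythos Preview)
Your proposal is correct and matches the paper's approach: both reduce to a representative with no nontrivial Tannakian subcategory and then invoke Proposition~\ref{genthm}. The only difference is that the paper skips your iterative construction entirely and simply cites \cite[Theorem~5.5]{DNO} to obtain the (unique) completely $\mathrm{sVec}$-anisotropic representative $\mathcal{D}$ of $[\mathcal{C}]$ --- exactly the alternative you mention in parentheses --- so your ``delicate point'' about $\mathcal{C}_A^0$ remaining slightly degenerate is absorbed into that citation rather than argued directly.
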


\begin{proof}
Theorem 5.5 of \cite{DNO} states that there exists a unique (up to braided equivalence) completely $\mathrm{sVec}$-anisotropic slightly degnerate braided fusion category $\mathcal{D}$, super Witt equivalent to $\mathcal{C}$.  In other words, any connected \'etale algebra in $\mathcal{D}$ is contained in $\mathrm{sVec}$.  In particular, $\mathcal{D}$ has no nontrivial Tannakian fusion subcategories and Proposition \ref{genthm} finishes the proof.
\end{proof}

\par Proposition \ref{genthm} and the analogous Corollary \ref{corint} are false for nondegenerately braided fusion categories.  For example $\mathcal{C}(\mathfrak{sl}_2,2)$ is completely anisotropic and modular, but there exists a simple object $\lambda\in\mathcal{C}(A_1,2)$ with
\begin{equation}
\mathrm{FPdim}(\lambda)=\sqrt{2}\not\in\mathbb{Q}=\mathbb{Q}(\mathrm{FPdim}(\mathcal{C}(\mathfrak{sl}_2,2))).
\end{equation}
Symmetrically braided fusion subcategories (such as $\mathrm{sVec}$) are the only obstruction.  In general, a symmetric fusion category is \emph{super Tannakian} \cite{deligne1,deligne2}, which extends the notion of Tannakian, to include braided fusion categories which are  braided equivalent to $\mathrm{Rep}(G,z)$ where $G$ is a finite group and $z\in G$ is a central element of order 2 twisting the usual braiding.  For instance $\mathrm{sVec}$ is nothing more than $\mathrm{Rep}(\mathbb{Z}/2\mathbb{Z},z)$ where $z$ is the nontrivial central element.  In particular $z=1$ corresponds to Tannakian categories from Example \ref{tannak}.

\begin{theo}
Let $\mathcal{C}$ be a nondegenerately braided fusion category.  If $\mathcal{C}$ has no symmetric fusion subcategories, then $\mathrm{FPdim}(X)\in\mathbb{K}_0$ for all $X\in\mathcal{C}$.
\end{theo}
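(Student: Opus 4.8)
The plan is to split the pointed part off $\mathcal{C}$ and then reduce to the triviality of a universal grading. First I would view $\mathcal{C}_\mathrm{pt}$ as a braided fusion category in its own right and note that its M\"uger center coincides with $\mathcal{C}_\mathrm{pt}\cap(\mathcal{C}_\mathrm{pt})'$ (the centralizer taken inside $\mathcal{C}$), which is a symmetric fusion subcategory of $\mathcal{C}$; by hypothesis it is trivial, so $\mathcal{C}_\mathrm{pt}$ is itself nondegenerately braided. Then M\"uger's decomposition theorem \cite{tcat} supplies a braided equivalence $\mathcal{C}\simeq\mathcal{C}_\mathrm{pt}\boxtimes\mathcal{E}$, where $\mathcal{E}:=(\mathcal{C}_\mathrm{pt})'$ is again nondegenerately braided, and comparing invertible objects on the two sides (those of a Deligne product $\mathcal{A}\boxtimes\mathcal{B}$ being precisely the $g\boxtimes h$ with $g,h$ invertible) I would conclude $\mathcal{E}_\mathrm{pt}\simeq\mathrm{Vec}$, since all invertibles of $\mathcal{C}$ already lie in the $\mathcal{C}_\mathrm{pt}$-tensorand.

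Next I would exploit that $\mathcal{E}$ is nondegenerately braided with trivial pointed subcategory: its universal grading group $U(\mathcal{E})\cong\widehat{\mathcal{O}(\mathcal{E}_\mathrm{pt})}$ (Section \ref{sec:mod}) is then trivial, so by Proposition \ref{intrograd} together with the surjection in~(\ref{pi}) the dimensional grading of $\mathcal{E}$ collapses, i.e.\ $\mathrm{FPdim}(Y)\in\mathbb{Q}(\mathrm{FPdim}(\mathcal{E}))$ for every simple $Y\in\mathcal{E}$. Since $\mathrm{FPdim}$ is multiplicative with respect to $\boxtimes$ and $\mathrm{FPdim}(\mathcal{C}_\mathrm{pt})=|\mathcal{O}(\mathcal{C}_\mathrm{pt})|$ is a positive integer, $\mathbb{Q}(\mathrm{FPdim}(\mathcal{E}))=\mathbb{Q}(\mathrm{FPdim}(\mathcal{C}))=\mathbb{K}_0$. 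Finally, every simple object of $\mathcal{C}$ is of the form $g\boxtimes Y$ with $g$ invertible and $Y\in\mathcal{O}(\mathcal{E})$, so $\mathrm{FPdim}(g\boxtimes Y)=\mathrm{FPdim}(Y)\in\mathbb{K}_0$, completing the proof. The boundary case $\mathcal{C}_\mathrm{pt}\simeq\mathrm{Vec}$ needs no separate treatment: there $\mathcal{E}=\mathcal{C}$ and the argument simply becomes the assertion that a nondegenerately braided fusion category with trivial pointed part has $\mathbb{K}_0=\mathbb{K}_1$.

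I expect the only real content to lie in the opening step---recognizing that the absence of symmetric fusion subcategories forces $\mathcal{C}_\mathrm{pt}$ to be nondegenerate, which is exactly what licenses M\"uger's factorization---together with citing that decomposition and the identification $U(\mathcal{E})\cong\widehat{\mathcal{O}(\mathcal{E}_\mathrm{pt})}$ correctly. The remaining steps are routine bookkeeping; the one spot demanding a little care is the equality $\mathbb{Q}(\mathrm{FPdim}(\mathcal{E}))=\mathbb{K}_0$, which hinges on the global dimension of the pointed tensorand being a rational integer.
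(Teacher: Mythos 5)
Your proof is correct, and it takes a genuinely different route from the one in the paper. The paper's proof is a one-line reduction to the slightly degenerate case: it passes to $\mathcal{C}\boxtimes\mathrm{sVec}$, observes that this has no nontrivial Tannakian subcategories, and invokes Proposition~\ref{genthm} (which in turn rests on the Davydov--Nikshych--Ostrik $s$-simple decomposition of slightly degenerate categories and the nonexistence of rank-3 slightly degenerate premodular categories). You instead stay inside $\mathcal{C}$ and use M\"uger's factorization $\mathcal{C}\simeq\mathcal{C}_\mathrm{pt}\boxtimes(\mathcal{C}_\mathrm{pt})'$, then kill the universal grading of the factor $\mathcal{E}=(\mathcal{C}_\mathrm{pt})'$ via the pointed--adjoint duality. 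What your approach buys is self-containment for this particular statement: you avoid the heavier DNO machinery that the paper's route depends on, at the cost of not simultaneously treating the slightly degenerate case (Proposition~\ref{genthm}), which the paper's framework handles uniformly.

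One point to tighten: the identification $U(\mathcal{E})\cong\widehat{\mathcal{O}(\mathcal{E}_\mathrm{pt})}$ is stated in Section~\ref{sec:mod} of the paper for \emph{modular} tensor categories, which carry a spherical structure; your $\mathcal{E}$ is a priori only nondegenerately braided. You should cite the nondegenerate-braided version of the adjoint--pointed duality $\mathcal{E}_\mathrm{ad}=(\mathcal{E}_\mathrm{pt})'$ (Drinfeld--Gelaki--Nikshych--Ostrik, ``On braided fusion categories~I''), from which $\mathcal{E}_\mathrm{pt}\simeq\mathrm{Vec}$ immediately forces $\mathcal{E}_\mathrm{ad}=\mathcal{E}$ and hence $U(\mathcal{E})$ trivial; this sidesteps any appeal to sphericality. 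The rest of the argument, including the transparent step $\mathbb{Q}(\mathrm{FPdim}(\mathcal{E}))=\mathbb{K}_0$ via integrality of $\mathrm{FPdim}(\mathcal{C}_\mathrm{pt})$, is sound.
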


\begin{proof}
The slightly degenerate category $\mathcal{C}\boxtimes\mathrm{sVec}$ has no nontrivial Tannakian fusion subcategories as $\mathcal{C}_\mathrm{pt}$ is nondegenerate by assumption and the result follows from Proposition \ref{genthm}.
\end{proof}

\begin{note}
Let $\mathcal{E}$ be a symmetric fusion category.  Davydov, Nikshych, and Ostrik define the tensor product $\mathcal{C}\boxtimes_\mathcal{E}\mathcal{D}$ of nondegenerate braided fusion categories $\mathcal{C},\mathcal{D}$ over $\mathcal{E}$ \cite[Definition 4.1]{DNO}, and moreover the Witt group of nondegenerate braided fusion categories over $\mathcal{E}$, $\mathcal{W}(\mathcal{E})$ \cite[Definition 5.1]{DNO}.  These constructions preserve fields of Frobenius-Perron dimensions, and so the above results can likewise be generalized to these settings.
\end{note}

\end{section}


\begin{section}{Quantum groups at roots of unity}\label{sec:quantum}

\par A rudimentary introduction to the modular tensor categories $\mathcal{C}(\mathfrak{g},k)$ is given in Examples \ref{ex:quant1} and \ref{ex:quant2} where the reader can find further references.  Our end goal is an exact description of $\mathbb{K}_\lambda$ and $\mathbb{L}$ for arbitrary $\lambda\in\mathcal{O}(\mathcal{C}(\mathfrak{g},k))$.  Our main tools are those of Section \ref{sec:main}, and Sawin's classification of closed subsets of $\mathcal{C}(\mathfrak{g},k)$, i.e. \!a classification of fusion subrings of the Grothendieck ring of $\mathcal{C}(\mathfrak{g},k)$ \cite[Theorem 1]{Sawin06}.  Classifying fusion subrings is equivalent to classifying fusion subcategories in general \cite[Corollary 4.11.4]{tcat}.  

\par Each $\lambda\in\mathcal{O}(\mathcal{C}(\mathfrak{g},k)_\text{pt})$ has $\fpdim(\lambda)=1$ and these simple currents were classified by Fuchs \cite{fuchs1991}; with one exception ($E_8$ at level $k=2$) they correspond to extended Dynkin diagram symmetries. In particular (apart from $E_8$ at level $k=2$) the group of simple currents for $\mathcal{C}(\mathfrak{g},k)$ is isomorphic to the center $Z:=Z(G)$ of the simply connected Lie group $G$ associated to $\mathfrak{g}$.

\par If $k\neq2$, fusion subcategories of $\mathcal{C}(\mathfrak{g},k)$ are $\Delta_H\subset\mathcal{C}(\mathfrak{g},k)_\text{pt}$ (subgroups $H\subset Z$), or their relative commutants in $\mathcal{C}(\mathfrak{g},k)$ \cite[Definition 2.6]{mug1} which we denote by $\Delta_H'$.  If  $\Delta_H\subset\Delta_G$, then $\Delta_G'\subset\Delta_H'$.


\begin{subsection}{Number of dimension fields}\label{sec:int}

\par Here we argue that with one exception, for fixed $\mathfrak{g}$ and $k$, the fields $\mathbb{K}_\lambda$ for $\lambda\in\mathcal{O}(\mathcal{C}(\mathfrak{g},k))$ are at most three in number.  Let us begin with an easy warm up, before resuming the general case.

\begin{example}\label{ex:typeA}
There is only one proper fusion subcategory of $\mathcal{C}(A_1,k)$ which is not pointed when $k>2$: $\mathcal{C}(A_1,k)_\text{ad}$.  The defining (natural) representation $\Lambda_1$ $\otimes$-generates $\mathcal{C}(A_1,k)$ and $\Lambda_1\otimes\Lambda_1^\ast\in\mathcal{C}(A_1,k)_\mathrm{ad}$, so $\mathbb{K}_{\lambda}=\mathbb{Q},\mathbb{K}_0,\mathbb{K}_1$ for any $\lambda\in\mathcal{O}(\mathcal{C}(A_1,k))$ where $\mathbb{K}_1=\mathbb{K}_{\Lambda_1}$.  Thus there are four cases of when $\mathbb{Q}$, $\mathbb{K}_0$, $\mathbb{K}_1$ are equal or not.  Figure \ref{fig:appB} illustrates that all possibilities of $\mathbb{Q}\to\mathbb{K}_0\to\mathbb{K}_1$ are realized where $\to$ is inclusion of fields.
\begin{figure}[H]
\centering
\begin{tikzpicture}
\node at (-1,3) {$\mathcal{C}(A_1,1):$};
\node (a) at (0,3) {$\mathbb{Q}$};
\node (c) at (3,3) {$\mathbb{Q}$};
\node (d) at (6,3) {$\mathbb{Q}$};
\draw[->] (a) -- (c);
\draw[->] (c) -- (d);

\node at (-1,2) {$\mathcal{C}(A_1,2):$};
\node (a) at (0,2) {$\mathbb{Q}$};
\node (c) at (3,2) {$\mathbb{Q}$};
\node (d) at (6,2) {$\mathbb{Q}(\sqrt{2})$};
\draw[->] (a) -- (c);
\draw[->] (c) -- (d);

\node at (-1,1) {$\mathcal{C}(A_1,3):$};
\node (a) at (0,1) {$\mathbb{Q}$};
\node (c) at (3,1) {$\mathbb{Q}(\sqrt{5})$};
\node (d) at (6,1) {$\mathbb{Q}(\sqrt{5})$};
\draw[->] (a) -- (c);
\draw[->] (c) -- (d);

\node at (-1,0) {$\mathcal{C}(A_1,6):$};
\node (a) at (0,0) {$\mathbb{Q}$};
\node (c) at (3,0) {$\mathbb{Q}(\sqrt{2})$};
\node (d) at (6,0) {$\mathbb{Q}(\sqrt{2+\sqrt{2}})$};
\draw[->] (a) -- (c);
\draw[->] (c) -- (d);
\end{tikzpicture}
    \caption{$\fpdim$ fields of $\mathcal{C}(A_1,k)$ for $k=1,2,3,6$}%
    \label{fig:appB}%
\end{figure}
\end{example}

\begin{lem}\label{modern}
Choose any category $\mathcal{C}(\mathfrak{g},k)$.  If $k\neq2$ and $\mathcal{D}\subset\mathcal{C}(\mathfrak{g},k)$ is a fusion subcategory which is not pointed, then $\mathcal{C}(\mathfrak{g},k)_\mathrm{ad}\subset\mathcal{D}$.
\end{lem}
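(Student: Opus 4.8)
The plan is to read the statement off Sawin's classification of fusion subcategories of $\mathcal{C}(\mathfrak{g},k)$, recalled just above, together with the standard fact that in a modular tensor category the adjoint subcategory is the M\"uger centralizer of the pointed subcategory.

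First I would invoke that classification. Since $k\neq2$, every fusion subcategory of $\mathcal{C}(\mathfrak{g},k)$ is either $\Delta_H$ or $\Delta_H'$ for some subgroup $H\subset Z$. The subcategories $\Delta_H$ are pointed by construction, so a fusion subcategory $\mathcal{D}$ that is \emph{not} pointed must be of the form $\mathcal{D}=\Delta_H'$ for some $H\subset Z$.

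Next I would pin down the two canonical subcategories appearing in the statement. Since $k\neq2$, the simple currents of $\mathcal{C}(\mathfrak{g},k)$ are exactly the objects of $\Delta_Z$, so $\mathcal{C}(\mathfrak{g},k)_\mathrm{pt}=\Delta_Z$. For a modular tensor category the adjoint subcategory --- equivalently, the trivial component of the universal grading --- coincides with the M\"uger centralizer of the pointed subcategory, so $\mathcal{C}(\mathfrak{g},k)_\mathrm{ad}=\bigl(\mathcal{C}(\mathfrak{g},k)_\mathrm{pt}\bigr)'=\Delta_Z'$.

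Finally, $H\subset Z$ gives the inclusion $\Delta_H\subset\Delta_Z$, and passing to relative commutants in $\mathcal{C}(\mathfrak{g},k)$ reverses inclusions (as recorded above), whence $\mathcal{C}(\mathfrak{g},k)_\mathrm{ad}=\Delta_Z'\subset\Delta_H'=\mathcal{D}$, as desired. Degenerate cases are subsumed: if $Z$ is trivial then $\mathcal{C}(\mathfrak{g},k)_\mathrm{pt}=\mathrm{Vec}$, $\mathcal{C}(\mathfrak{g},k)_\mathrm{ad}=\mathcal{C}(\mathfrak{g},k)$, and the only non-pointed fusion subcategory is $\mathcal{C}(\mathfrak{g},k)$ itself, so the inclusion still holds. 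I do not anticipate a genuine obstacle here: the entire content sits in Sawin's theorem and in the identification $\mathcal{C}_\mathrm{ad}=(\mathcal{C}_\mathrm{pt})'$ for modular categories; the only point deserving care is that both the clean form of Sawin's list and the equality ``simple currents $=Z$'' rely on the hypothesis $k\neq2$, which is exactly why that hypothesis is imposed.
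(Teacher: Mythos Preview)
Your proof is correct and follows essentially the same route as the paper's: both invoke Sawin's classification (the ``discussion at the beginning of this section'') to write any non-pointed $\mathcal{D}$ as $\Delta_H'$, then use $\Delta_H\subset\Delta_Z$ and inclusion-reversal for relative commutants, together with the identification $\mathcal{C}(\mathfrak{g},k)_\mathrm{ad}=\Delta_Z'$ (the paper cites \cite[Corollary 6.9]{nilgelaki} for this, which is exactly your $(\mathcal{C}_\mathrm{pt})'=\mathcal{C}_\mathrm{ad}$ in the modular case). Your version is simply more explicit about each step.
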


\begin{proof}
Since $\Delta_G\subset\Delta_Z$ for all subgroups $G\subset Z$, then $\Delta_G'\supset\Delta_Z'=\mathcal{C}(\mathfrak{g},k)_\mathrm{ad}$ \cite[Corollary 6.9]{nilgelaki} by the discussion at the beginning of this section.
\end{proof}

\begin{theo}\label{theoremlemma}
Choose any category $\mathcal{C}(\mathfrak{g},k)$ except $\mathcal{C}(E_7,2)$.  For all $\lambda\in\mathcal{O}(\mathcal{C}(\mathfrak{g},k))$, we have $\mathbb{K}_\lambda=\mathbb{Q},\mathbb{K}_0,\mathbb{K}_1$.
\end{theo}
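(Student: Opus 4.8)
The plan is to partition $\mathcal{O}(\mathcal{C}(\mathfrak{g},k))$ into the simple currents, which satisfy $\fpdim(\lambda)=1$ and hence $\mathbb{K}_\lambda=\mathbb{Q}$, and the remaining (non-pointed) simple objects, for which I intend to prove $\mathbb{K}_0\subseteq\mathbb{K}_\lambda$. Since $\mathbb{K}_\lambda\subseteq\mathbb{K}_1$ holds for every object, it then suffices to show in addition that $[\mathbb{K}_1:\mathbb{K}_0]\leq 2$: the chain $\mathbb{K}_0\subseteq\mathbb{K}_\lambda\subseteq\mathbb{K}_1$ would force $\mathbb{K}_\lambda\in\{\mathbb{K}_0,\mathbb{K}_1\}$, which together with the simple-current case gives exactly the three claimed possibilities.

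To bound the top extension, I would use the surjection $\pi_{FP}\colon U(R)\twoheadrightarrow\mathrm{Gal}(\mathbb{K}_1/\mathbb{K}_0)$ from \eqref{pi}. For $\mathcal{C}(\mathfrak{g},k)$ the universal grading group $U(R)$ is isomorphic to the center $Z$ of the simply connected group attached to $\mathfrak{g}$ --- cyclic for every type apart from $D_n$ with $n$ even --- or to $\mathbb{Z}/2\mathbb{Z}$ in the lone anomalous case $\mathcal{C}(E_8,2)$. When $U(R)$ is cyclic, Proposition~\ref{cyclic} immediately gives $[\mathbb{K}_1:\mathbb{K}_0]\leq 2$. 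For type $D_n$ with $n$ even, where $Z\cong(\mathbb{Z}/2\mathbb{Z})^2$, I would note that the two half-spin fundamental weights are interchanged by a diagram automorphism, hence have equal Frobenius--Perron dimension and so lie in a common component of the dimensional grading of Proposition~\ref{intrograd}; as their universal-grading classes $s,c$ satisfy $sc=v$ (the vector class), this puts $v\in\ker\pi_{FP}$, so $\mathrm{Gal}(\mathbb{K}_1/\mathbb{K}_0)$ has order at most $2$ again.

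Now fix a non-pointed simple $\lambda$ and let $\mathcal{D}:=\langle\lambda\rangle$ be the fusion subcategory it $\otimes$-generates. It contains the non-invertible object $\lambda$, so it is not pointed, and therefore (for $k\neq 2$) Lemma~\ref{modern} gives $\mathcal{C}(\mathfrak{g},k)_{\mathrm{ad}}\subseteq\mathcal{D}$, the trivial component of the universal grading. Every simple object of $\mathcal{D}$ is a summand of some $(\lambda+\lambda^\ast)^{\otimes n}$, so $\mathcal{D}$ is multiplicatively generated by $x:=\lambda+\lambda^\ast$, and $\mathbb{Q}(\fpdim(x))=\mathbb{Q}(2\fpdim(\lambda))=\mathbb{K}_\lambda$. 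The multiplicative-generation proposition of Section~\ref{sec:main} then gives $\fpdim(\mu)\in\mathbb{K}_\lambda$ for every simple $\mu\in\mathcal{D}$; in particular $\fpdim\bigl(\mathcal{C}(\mathfrak{g},k)_{\mathrm{ad}}\bigr)=\sum_{\mu}\fpdim(\mu)^2\in\mathbb{K}_\lambda$, and since each component of a faithful grading shares the Frobenius--Perron dimension of the trivial component, $\fpdim(\mathcal{C}(\mathfrak{g},k))=|U(R)|\cdot\fpdim\bigl(\mathcal{C}(\mathfrak{g},k)_{\mathrm{ad}}\bigr)\in\mathbb{K}_\lambda$. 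Hence $\mathbb{K}_0\subseteq\mathbb{K}_\lambda$, as wanted, and the main line of argument is complete for $k\neq 2$.

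What remains is $k=2$, where Lemma~\ref{modern} no longer applies, together with the sporadic exceptional closed subsets in Sawin's classification \cite[Theorem~1]{Sawin06}; these I would handle case by case. For each such category one shows either that every non-pointed fusion subcategory still contains the adjoint --- so the previous paragraph goes through unchanged --- or that the category is of small enough rank (for instance $\mathcal{C}(E_8,2)$ has rank $3$) that the (at most three) fields $\mathbb{K}_\lambda$ can be read off directly. The only place neither works is $\mathcal{C}(E_7,2)$: it carries an exceptional non-pointed fusion subcategory not containing the adjoint, producing a simple object whose Frobenius--Perron dimension generates a proper subfield of $\mathbb{K}_0$, which is precisely why it is excluded. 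I expect this level-$2$ and exceptional-subcategory bookkeeping to be the main obstacle --- the structural core of the proof is uniform, but verifying that $\mathcal{C}(E_7,2)$ is the unique failure requires a careful pass through the full list of exceptional fusion subcategories of the categories $\mathcal{C}(\mathfrak{g},k)$.
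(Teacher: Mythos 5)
Your proof is correct and follows essentially the same route as the paper: bound $[\mathbb{K}_1:\mathbb{K}_0]\leq 2$ using the cyclicity of $U(R)$ via Proposition \ref{cyclic} (supplemented by the equal-$\fpdim$ spinor argument for $D_{2n}$), then use Lemma \ref{modern} to force $\mathbb{K}_0\subseteq\mathbb{K}_\lambda$ for non-pointed $\lambda$, deferring the sporadic $k=2$ cases. The one place you add genuine content is the explicit derivation of $\mathbb{K}_0\subseteq\mathbb{K}_\lambda$ via the multiplicatively-generated subcategory $\langle\lambda\rangle$ and the identity $\fpdim(\mathcal{C})=|U(R)|\cdot\fpdim(\mathcal{C}_\mathrm{ad})$; the paper simply cites Lemma \ref{modern} without spelling this step out.
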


\begin{proof}
The universal grading group of $\mathcal{C}(\mathfrak{g},k)$ is either cyclic, or Klein-4 (for Type $D_{2n}$) as these are isomorphic to the abelian groups $\mathcal{O}(\mathcal{C}(\mathfrak{g},k))_\mathrm{pt}$ \cite[Theorem 6.3]{nilgelaki}, so $\mathbb{K}_\lambda\subset\mathbb{K}_1$ for all $\lambda\in\mathcal{O}(\mathcal{C}(\mathfrak{g},k))$ by Proposition \ref{cyclic} except (potentially) in the case of Type $D_{2n}$.  But for Type $D_{2n}$, the spinor representations $\Lambda_{2n}$ and $\Lambda_{2n-1}$ have the same $\fpdim$ but lie in distinct nontrivial components of the universal grading, and thus the dimensional grading is trivial or $\mathbb{Z}/2\mathbb{Z}$ as well.

\par For all $\mathcal{C}(\mathfrak{g},k)$, $\mathbb{K}_0\subset\mathbb{K}_\lambda$ by Lemma \ref{modern} and moreover $\mathbb{K}_\lambda=\mathbb{K}_0,\mathbb{K}_1$ as $[\mathbb{K}_1:\mathbb{K}_0]=2$.  The exceptional level $k=2$ cases (aside from $\mathcal{C}(E_7,2)$) are weakly integral ($\mathcal{C}(B_n,2)$, $\mathcal{C}(D_n,2)$) or are rank 3 ($\mathcal{C}(E_8,2)$) so the statement is trivial.
\end{proof}

\begin{example}[the exception]\label{E}
Consider $\mathcal{C}(E_7,2)$ of rank 6.  We have described the Grothendieck ring $R$ explicitly in Example \ref{exe72}.  Moreso
\begin{equation}
\mathcal{C}(E_7,2)\simeq\mathcal{C}(A_1,3)_\text{ad}\boxtimes\mathcal{C}(C_2,1)
\end{equation}
is a braided equivalence.  Hence $\mathbb{K}_\lambda$ for $\lambda\in\mathcal{O}(\mathcal{C}(E_7,2))$ is $\mathbb{Q}$, $\mathbb{Q}(\sqrt{2})$, $\mathbb{Q}(\sqrt{5})$, or $\mathbb{Q}(\sqrt{2},\sqrt{5})$.
\end{example}

\begin{note}
The category $\mathcal{C}(E_7,2)$ still satisfies the hypotheses of Proposition \ref{cyclic}, i.e. \!there exists a unique quadratic extension $\mathbb{K}_1/\mathbb{K}_0$ with $\mathrm{FPdim}(x)\in\mathbb{K}_1$ for all $x\in R$.  So $\mathcal{C}(E_7,2)$ is not exceptional in this way.
\end{note}

\end{subsection}


\begin{subsection}{Explicit descriptions of dimension fields}\label{explicitdim}

\par Proposition \ref{theoremlemma} allows each number field $\mathbb{K}_\lambda$ for $\lambda\in\mathcal{O}(\mathcal{C}(\mathfrak{g},k))$ to be described explicitly (Figure \ref{fig:B}) as this only depends on the component of $\lambda$ in the dimensional grading (see Note \ref{note1}).  To compute these two fields we are free to choose test weights from the trivial and nontrivial components of the dimensional grading.  Explicitly, if $\lambda=\sum_j\ell_j\Lambda_j$ is an integral sum of fundamental weights, the dimensional grading of $\lambda$ is the parity of $\sum_jj\ell_j$ for Types $A_n,C_n$, $\ell_n$ for Type $B_n$, $\ell_{n-1}+\ell_n$ for Type $D_n$, and $\ell_4+\ell_6+\ell_7$ for Type $E_7$ (the other types have trivial universal grading group).

\par If $n,m\in\mathbb{Z}_{\geq1}$, denote $[n]_m:=\sin(n\pi/m)/\sin(\pi/m)$.  Note that $\cos(nx)=T_n(\cos(x))$ where $T_n$ (the $n^\text{\tiny th}$ Chebyshev polynomial) is even (respectively, odd) if and only if $n$ is even (respectively, odd). This means that any nonzero finite product $\prod_i\cos(a_i\pi/m)$ where $a_i,m$ are integers, lies in $\mathbb{Q}_m$ if $\sum_ia_i$ is even or $m$ is odd, otherwise it lies in $\mathbb{Q}_{2m}$ and not $\mathbb{Q}_m$. 

\begin{lem}\label{lem:a1}
If $n,m\in\mathbb{Z}$ such that $0<n<m$, then
\begin{equation*}
\mathbb{Q}([n]_m)=\left\{\begin{array}{lr} \mathbb{Q} & \textnormal{if }n=1\textnormal{ or }n=m-1 \\
\mathbb{Q}_{2m} & \textnormal{if }n\textnormal{ is even, and }1<n<m-1 \\
\mathbb{Q}_{m} & \textnormal{otherwise} \end{array}\right..
\end{equation*}
\end{lem}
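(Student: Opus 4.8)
The plan is to locate $\mathbb{Q}([n]_m)$ inside the tower $\mathbb{Q}\subseteq\mathbb{Q}_m\subseteq\mathbb{Q}_{2m}$ (note $\mathbb{Q}_{2m}=\mathbb{Q}(\cos(\pi/m))$), by proving an upper bound $\mathbb{Q}([n]_m)\subseteq\mathbb{Q}_{2m}$, a lower bound $\mathbb{Q}_m\subseteq\mathbb{Q}([n]_m)$ valid when $1<n<m-1$, and a criterion telling when $[n]_m$ actually lies in $\mathbb{Q}_m$. The cases $n=1$ and $n=m-1$ are immediate, since $[1]_m=[m-1]_m=1$. For the upper bound I would write $[n]_m=(\xi_{2m}^{n}-\xi_{2m}^{-n})/(\xi_{2m}-\xi_{2m}^{-1})$, a ratio in which the factors of $i$ cancel; this is a real element of $\mathbb{Q}(\xi_{2m})$, hence lies in its maximal real subfield $\mathbb{Q}(\xi_{2m}+\xi_{2m}^{-1})=\mathbb{Q}_{2m}$.

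Next I would settle when $[n]_m\in\mathbb{Q}_m$. If $m$ is odd then $\mathbb{Q}(\xi_{2m})=\mathbb{Q}(\xi_m)$, so $\mathbb{Q}_{2m}=\mathbb{Q}_m$ and there is nothing to check. If $n$ is odd, the expansion $[n]_m=1+2\sum_{j=1}^{(n-1)/2}\cos(2j\pi/m)$ presents $[n]_m$ as a polynomial in $\cos(2\pi/m)$, so $[n]_m\in\mathbb{Q}_m$. If $n$ and $m$ are both even, consider the automorphism $\sigma$ of $\mathbb{Q}(\xi_{2m})$ with $\sigma(\xi_{2m})=-\xi_{2m}$ (well defined, since $-\xi_{2m}$ is again a primitive $2m$-th root when $m$ is even): it fixes $\xi_{2m}^{2}=\xi_m$, hence fixes $\mathbb{Q}_m$, while $\sigma(\cos(k\pi/m))=(-1)^k\cos(k\pi/m)$; since $[n]_m=2\sum_{j=1}^{n/2}\cos((2j-1)\pi/m)$ for even $n$, this gives $\sigma([n]_m)=-[n]_m\neq[n]_m$, so $[n]_m\notin\mathbb{Q}_m$.

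The substantive step is the lower bound $\mathbb{Q}_m\subseteq\mathbb{Q}([n]_m)$ for $1<n<m-1$, which I would deduce from $\mathbb{Q}([n]_m^{2})=\mathbb{Q}_m$. From $[n]_m^{2}=(1-T_n(\cos(2\pi/m)))/(1-\cos(2\pi/m))$ it is clear that $[n]_m^{2}\in\mathbb{Q}_m$, so I must show no nontrivial element of $\mathrm{Gal}(\mathbb{Q}_m/\mathbb{Q})\cong(\mathbb{Z}/m\mathbb{Z})^\times/\{\pm1\}$ fixes $[n]_m^{2}$; the element indexed by $b$ sends $[n]_m^{2}$ to $(\sin(nb\pi/m)/\sin(b\pi/m))^{2}$, so it suffices to prove $\lvert\sin(nb\pi/m)/\sin(b\pi/m)\rvert<[n]_m$ whenever $b\not\equiv\pm1\pmod m$. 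Using $[n]_m=[m-n]_m$ and the invariance of the left side under $b\mapsto m-b$, one reduces to $2\le n\le m/2$ and $2\le b<m/2$; setting $\beta=b\pi/m\in(0,\pi/2)$, one then invokes the fact (readily checked by differentiating the Fourier expansions above) that $\phi\mapsto\sin(n\phi)/\sin\phi$ is strictly decreasing on $(0,\pi/n]$. When $nb\le m$, so $\pi/m<\beta\le\pi/n$, this monotonicity gives $\sin(n\beta)/\sin\beta<\sin(n\pi/m)/\sin(\pi/m)=[n]_m$ directly (and the left side is nonnegative). When $nb>m$, so $\pi/n<\beta<\pi/2$, one bounds $\lvert\sin(n\beta)/\sin\beta\rvert<1/\sin(\pi/n)\le1/\sin(\pi/(2n))\le[n]_m$, the last step because $\pi/m\le\pi/(2n)$ and the same monotonicity gives $[n]_m\ge\sin(\pi/2)/\sin(\pi/(2n))$.

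Finally I assemble the three pieces. For $1<n<m-1$ we get $\mathbb{Q}_m\subseteq\mathbb{Q}([n]_m)\subseteq\mathbb{Q}_{2m}$; if $n$ or $m$ is odd then $[n]_m\in\mathbb{Q}_m$ forces $\mathbb{Q}([n]_m)=\mathbb{Q}_m$ (when $m$ is odd this equals $\mathbb{Q}_{2m}$, consistent with the even-$n$ case of the statement), while if $n$ and $m$ are both even then $[n]_m\notin\mathbb{Q}_m$ together with $[\mathbb{Q}_{2m}:\mathbb{Q}_m]=\varphi(2m)/\varphi(m)=2$ forces $\mathbb{Q}([n]_m)=\mathbb{Q}_{2m}$. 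I expect the main obstacle to be the strict inequality in the lower-bound step; the rest is routine manipulation of cyclotomic fields and of the trigonometric and Chebyshev identities already present in the surrounding text.
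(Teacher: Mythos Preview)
Your argument is correct, but it follows a genuinely different route from the paper's own proof.

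The paper interprets $[n]_m$ as $\mathrm{FPdim}((n-1)\Lambda_1)$ in $\mathcal{C}(A_1,m-2)$ and then invokes Proposition~\ref{theoremlemma}, which (via the dimensional grading and Sawin's classification of fusion subcategories) forces $\mathbb{K}_{(n-1)\Lambda_1}\in\{\mathbb{Q},\mathbb{K}_0,\mathbb{K}_1\}$.  It then only has to identify $\mathbb{K}_1=\mathbb{Q}([2]_m)=\mathbb{Q}_{2m}$ and $\mathbb{K}_0=\mathbb{Q}([3]_m)=\mathbb{Q}_m$, which are one-line computations.  Your proof, by contrast, is entirely self-contained: the upper bound and the parity analysis are straightforward cyclotomic manipulations, and the lower bound is obtained by the direct trigonometric inequality $\lvert\sin(nb\pi/m)/\sin(b\pi/m)\rvert<[n]_m$ for $b\not\equiv\pm1\pmod m$.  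That inequality is precisely the Frobenius--Perron maximality of $[n]_m$ among its Galois conjugates---the very principle underlying Lemma~\ref{theorem}---proved here by hand rather than imported from the fusion-ring framework.  The paper's approach is much shorter and illustrates how the lemma sits inside the theory; yours is longer but elementary, avoids Proposition~\ref{theoremlemma} and any categorical input, and would stand on its own outside the paper's context.
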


\begin{proof}
When $m=2$ the statement is clear so let $m\geq3$.  Then $[n]_m$ is the $\fpdim$ of $(n-1)\Lambda_1\in\mathcal{O}(\mathcal{C}(A_1,m-2))$.  Proposition \ref{theoremlemma} implies that $\mathbb{K}_{(n-1)\Lambda_1}$ is either $\mathbb{K}_{\Lambda_1}=\mathbb{Q}([2]_m)$ if $n$ is even and $(n-1)\Lambda_1$ is not a simple current (if and only if $n=1$ or $n=m-1$), or $\mathbb{K}_{2\Lambda_1}=\mathbb{Q}([3]_m)$ if $n$ is odd and $(n-1)\Lambda_1$ is not a simple current. But $\mathbb{Q}([2]_m)=\mathbb{Q}(2\cos(\pi/m))=\mathbb{Q}_{2m}$ and $\mathbb{Q}([3]_m)=\mathbb{Q}(1+2\cos(2\pi/m))=\mathbb{Q}_m$.
\end{proof}

\par\textbf{Type $A_n$, $n\in\mathbb{Z}_{\geq1}$: } Our test weights will be the adjoint representation $\Lambda_1+\Lambda_n$ in the trivial component and the defining (natural) representation $\Lambda_1$ when $k>1$.  The category $\mathcal{C}(A_n,1)$ is pointed, otherwise using (\ref{weyl}), $\fpdim(\Lambda_1)=[n+1]_\kappa$ and $\fpdim(\Lambda_1+\Lambda_2)=[n+1]_\kappa^2-1$.  Therefore $\mathbb{K}_1=\mathbb{Q}_\kappa$ if $n$ is even and $\mathbb{K}_1=\mathbb{Q}_{2\kappa}$ if $n$ is odd.  But if $n$ is odd, $\mathbb{Q}_\kappa=\mathbb{Q}_{2\kappa}$ when $k$ is odd.   For $n=1$ we then have $\mathbb{K}_0=\mathbb{Q}_\kappa$ as
\begin{equation}
\mathrm{FPdim}(\Lambda_1+\Lambda_n)=4\cos(\pi/\kappa)^2-1=2\cos(2\pi/\kappa)+1.
\end{equation}
In general, $\fpdim(\Lambda_1+\Lambda_n)$ is one less than the $\fpdim$ of an object in $\mathcal{C}(\mathfrak{sl}_2,\kappa-2)_\mathrm{ad}$ hence $\mathbb{K}_0=\mathbb{Q}_\kappa$.  In sum, all fields are $\mathbb{Q}_{\kappa}$ except $\mathbb{K}_1=\mathbb{Q}_{2\kappa}$ when $n$ is odd and $k$ is even.


\par\textbf{Type $B_n$, $n\in\mathbb{Z}_{\geq3}$: } The categories $\mathcal{C}(B_n,1)$ are Ising categories for which $\mathbb{K}_0=\mathbb{Q}$ and $\mathbb{K}_1=\mathbb{Q}(\sqrt{2})$ and the categories $\mathcal{C}(B_n,2)$ are metaplectic where $\mathbb{K}_0=\mathbb{Q}$ and $\mathbb{K}_1=\mathbb{Q}(\sqrt{2n+1})$.  Assume $k>2$.  Our test weights will be the defining (natural) representation $\Lambda_1$ in the trivial component and the spinor representation $\Lambda_n$.  We compute $\fpdim(\Lambda_1)=[2n]_{\kappa}+1$ hence $K_0=\mathbb{Q}_{2\kappa}$ by Lemma \ref{lem:a1}, and
\begin{equation}
\fpdim(\Lambda_n)=2^n\prod_{i=1}^n\cos((2i-1)\pi/(2\kappa)).
\end{equation}
By the discussion above, $\mathbb{K}_1=\mathbb{Q}_{2\kappa}$ if $n$ is even and $k>2$ and $\mathbb{K}_1=\mathbb{Q}_{4\kappa}$ if $n$ is odd and $k>2$.


\par\textbf{Type $C_n$, $n\in\mathbb{Z}_{\geq2}$: }  If $k=1$ the dimensions coincide with those of $\mathcal{C}(A_1,n)$ so assume $k>1$.  Our test weights will be the defining (natural) representation $\Lambda_1$ in the nontrivial component and the weights $2\Lambda_1$ and $\Lambda_2$ in the trivial component for ease of argument.  We compute $\fpdim(\Lambda_1)=[2n+1]_{2\kappa}-1$ hence $\mathbb{K}_1=\mathbb{Q}_{2\kappa}$ by Lemma \ref{lem:a1}, and
\begin{equation}\fpdim(2\Lambda_1)=([n+1]_\kappa-1)[2n+1]_{2\kappa}.
\end{equation}
We know $[2n+1]_{2\kappa}$ generates $\mathbb{Q}_{2\kappa}$ and when $n$ is even $[n+1]_\kappa$ lies in $\mathbb{Q}_\kappa$. So when $n$ is even, $\mathbb{K}_0=\mathbb{Q}_{2\kappa}$.  Lastly, if $n$ is odd,
\begin{equation}
\fpdim(\Lambda_2)=([n]_\kappa-1)[2n+1]_{2\kappa}
\end{equation}
and $[n]_\kappa\in\mathbb{Q}_\kappa$ so $\fpdim(\Lambda_2)$ generates $\mathbb{Q}_{2\kappa}$. Thus in all cases $\mathbb{K}_0=\mathbb{Q}_{2\kappa}$.  
%


\par\textbf{Type $D_n$, $n\in\mathbb{Z}_{\geq4}$: } The categories $\mathcal{C}(D_n,1)$ are pointed so $\mathbb{K}_0=\mathbb{K}_1=\mathbb{Q}$ and the categories $\mathcal{C}(D_n,2)$ are metaplectic with $\mathbb{K}_0=\mathbb{Q}$ and $\mathbb{K}_1=\mathbb{Q}(\sqrt{n})$.  Assume $k>2$.  Our test weights will be the defining (natural) representation $\Lambda_1$ in the trivial component and the spinor representation $\Lambda_n$.  We find $\fpdim(\Lambda_1)=[2n-1]_{\kappa}+1$ hence $K_0=\mathbb{Q}_{\kappa}$ by Lemma \ref{lem:a1}, and
\begin{equation}
\mathrm{FPdim}(\Lambda_n)=2^{n-1}\prod_{j=1}^{n-1}\cos(j\pi/\kappa).
\end{equation}
By the discussion at the beginning of the section, this implies (with $k>2$) $\mathbb{K}_1=\mathbb{Q}_\kappa$ when $n\equiv0,1\pmod{4}$ and $\mathbb{K}_1=\mathbb{Q}_{2\kappa}$ otherwise.  But $\mathbb{Q}_\kappa=\mathbb{Q}_{2\kappa}$ when $\kappa$ is odd, thus $\mathbb{K}_0=\mathbb{K}_1$ in all cases except when $n\equiv2,3\pmod{4}$ and $\kappa$ is even.

%

\par\textbf{Type $G_2$:}  The dimensional grading is trivial so we need only one test weight.  Let $\lambda$ be the 7-dimensional irreducible representation of $\mathfrak{g}_2$, with
\begin{equation}
\fpdim(\lambda)=1+2\cos(2\pi/3\kappa)+2\cos(8\pi/3\kappa)+2\cos(10\pi/3\kappa).
\end{equation}
Hence $\mathbb{K}_0=\mathbb{K}_1=\mathbb{Q}(\cos(2\pi/3\kappa)+\cos(8\pi/3\kappa)+\cos(10\pi/3\kappa))$.  This manifestly lies in $\mathbb{Q}_{3\kappa}$.  It lies in (and hence generates) a proper subfield, if and only if there exists an integer $1<\ell<3\kappa-1$ coprime to $3\kappa$ such that
\begin{equation}
\cos\left(\frac{2\pi\ell}{3\kappa}\right)+\cos\left(\frac{8\pi\ell}{3\kappa}\right)+\cos\left(\frac{10\pi\ell}{3\kappa}\right)=\cos\left(\frac{2\pi}{3\kappa}\right)+\cos\left(\frac{2\pi}{3\kappa}\right)+\cos\left(\frac{2\pi}{3\kappa}\right).
\end{equation}
But for any $\ell$, $\cos(2\pi\ell/3\kappa)<\cos(2\pi/3\kappa)$, so either $4\ell\equiv\pm1,\pm2,\pm3\pmod{3\kappa}$ or $5\ell\equiv\pm 1,\pm 2,\pm 3,\pm4\pmod{3\kappa}$.  This same Galois automorphism $\sigma_\ell\in\mathrm{Gal}(\mathbb{Q}_{3\kappa}/\mathbb{Q})$ would have to fix $s_{\mathbbm{1},\mathbbm{1}}^{-2}=\mathrm{FPdim}(\mathcal{C}(\mathfrak{g}_2,k))\in\mathbb{K}_0=\mathbb{K}_1$, so $s_{\hat{\sigma}_\ell(\mathbbm{1}),\mathbbm{1}}=s_{\mathbbm{1},\mathbbm{1}}$ and hence $\hat{\sigma}_\ell(\mathbbm{1})$ is a simple current. But the only simple current of $\mathcal{C}(\mathfrak{g}_2,k)$ is $\mathbbm{1}$, so we must have $\hat{\sigma_\ell}(\mathbbm{1})=\mathbbm{1}$. This requires $t_{\mathbbm{1},\mathbbm{1}}^{\ell^2}=t_{\mathbbm{1},\mathbbm{1}}$, i.e. $\ell^2\langle\rho,\rho\rangle\equiv\langle\rho,\rho\rangle\pmod{2\kappa}$, hence $7\ell^2\equiv 7\pmod{3\kappa}$. Given the prime decomposition $3\kappa=\prod_pp^{n_p}$, this becomes $\ell\equiv\pm1\pmod{p^{n_p}}$ for all $p$ except $p=2,7$ where we require $\ell\equiv\pm 1\pmod{p^{n_p-1}}$. The signs can depend on $p$.

\par Putting these together, we achieve a small list of prospective levels (14 to be exact) to examine.  For example, no prime $p>7$ can divide $\kappa$, since it would have to satisfy $4\equiv \pm 1,\pm 2,\pm 3$ or $5\equiv \pm 1,\pm 2,\pm 3,\pm 4$  (mod $p$). Likewise, if 9 or 5 divide $3\kappa$, then $\kappa$ must be odd.  Thus we have exceptions $k=1$ where $\mathbb{K}_0=\mathbb{K}_1=\mathbb{Q}_{\kappa}$ and level $k=3$ where $\mathbb{K}_0=\mathbb{K}_1=\mathbb{Q}(\sqrt{21})$.


\par\textbf{Type $F_4$:} The dimensional grading is trivial so we need only one test weight.  Let $\lambda$ be the 26-dimensional irreducible representation of $F_4$ with
\begin{equation*}
\fpdim(\lambda)=[12]_\kappa+[7]_\kappa+2\cos(10\pi/\kappa)+2\cos(5\pi/\kappa)+2\cos(\pi/\kappa)+1
\end{equation*}
This manifestly lies in $\mathbb{Q}_{2\kappa}$. We will use the same argument as in the $G_2$ case to prove that $\fpdim(\lambda)$ generates $\mathbb{Q}_{2\kappa}$.  As long as $\kappa>12$, any $1<\ell<2\kappa-1$ coprime to $2\kappa$ has $\sigma_\ell([12]_\kappa)<[12]_\kappa$ and $\sigma_\ell([7]_\kappa)\leq [7]_\kappa$, since for $\kappa>12$ these are $\mathrm{FPdims}$ of simple objects in $\mathcal{C}(A_1,\kappa-2)$, so we require $5\ell\equiv\pm1,\pm2,\pm3,\pm4$ (mod $\kappa$). But we would require $t_{\mathbbm{1},\mathbbm{1}}^{\ell^2}=t_{\mathbbm{1},\mathbbm{1}}$ for the same reason as for Type $G_2$, which reduces to $\ell^2\equiv 39\pmod{2\kappa}$. Again these two constraints eliminate all but a small number of levels which have to be dealt with explicitly.  These exceptions are at level $k=1$ with $\mathbb{K}_0=\mathbb{K}_1=\mathbb{Q}(\sqrt{5})$, at level $k=3$ with $\mathbb{K}_0=\mathbb{K}_1=\mathbb{Q}(\sqrt{6})$ and at level $k=4$ with $\mathbb{K}_0=\mathbb{K}_1=\mathbb{Q}(\cos(2\pi/13)-\cos(3\pi/13))$.


\par\textbf{Type $E_6$:} The dimensional grading is trivial so we need only one test weight.  Let $\lambda$ be a 27-dimensional irreducible representation of $E_6$ with
\begin{equation}
\fpdim(\lambda)=[17]_{\kappa}+[9]_{\kappa}+1.
\end{equation}
Hence $\mathbb{K}_0=\mathbb{K}_1=\mathbb{Q}( [17]_{\kappa}+[9]_{\kappa})\in\mathbb{Q}_\kappa$.  For $\kappa>17$, $[17]_\kappa+[9]_\kappa$ is the $\fpdim$ of an object in $\mathcal{C}(A_1,\kappa-2)_\mathrm{ad}$ so $\mathbb{K}_0=\mathbb{K}_1=\mathbb{Q}_\kappa$.  It suffices to consider $12<\kappa\le 17$ individually.  This produces the exceptions $k=1$ with $\mathbb{K}_0=\mathbb{K}_1=\mathbb{Q}$ and $k=3$ with $\mathbb{K}_0=\mathbb{K}_1=\mathbb{Q}(\sqrt{5})$.


\par\textbf{Type $E_7$:} Our test weights will be the 56-dimensional irreducible representation $\lambda$ in the nontrivial component of the dimensional grading with $\fpdim(\lambda)=[28]_{\kappa}+[18]_{\kappa}+[10]_{\kappa}$ and the adjoint representation $\theta$, with
\begin{equation}
\fpdim(\theta)=[35]_{\kappa}+[27]_{\kappa}+[23]_{\kappa}+[19]_{\kappa}+[15]_{\kappa}+[11]_{\kappa}+[3]_{\kappa}.
\end{equation}  These manifestly live in $\mathbb{Q}_{2\kappa}$ and $\mathbb{Q}_{\kappa}$ respectively, and by the above arguments we know they will generate those   fields for $\kappa>28$ and $\kappa>35$ respectively. The values $18<\kappa\le 35$ can be dealt with separately giving exceptional levels $k=1$ with $\mathbb{K}_0=\mathbb{K}_1=\mathbb{Q}$, $k=2$ with $\mathbb{K}_0=\mathbb{K}_1=\mathbb{Q}(\sqrt{2},\sqrt{5})$ and  $\mathbb{K}_0=\mathbb{K}_1=\mathbb{Q}(\sqrt{5})$, and $k=3$ with $\mathbb{K}_0=\mathbb{K}_1=\mathbb{Q}(\sqrt{21})$.


\par\textbf{Type $E_8$:}  The dimensional grading is trivial so we need only one test weight.  Let $\lambda$ be the adjoint representation of $E_8$ with
\begin{equation}
\fpdim(\lambda)=[59]_{\kappa}+[47]_{\kappa}+[39]_{\kappa}+[35]_{\kappa}+[27]_{\kappa}+[23]_{\kappa}+[15]_{\kappa}+[3]_{\kappa}.
\end{equation}
This manifestly live in $\mathbb{Q}_{\kappa}$ .  By the reasoning above, it suffices to consider $30<\kappa\le 59$ giving exceptions at levels $k=1$ where $\mathbb{K}_0=\mathbb{K}_1=\mathbb{Q}$, $k=2$ where $\mathbb{K}_0=\mathbb{K}_1=\mathbb{Q}(\sqrt{2})$, $k=3$ where $\mathbb{K}_0=\mathbb{K}_1=\mathbb{Q}_{11}$, and $k=5$ where $\mathbb{K}_0=\mathbb{K}_1=\mathbb{Q}(\cos(2\pi/35)+\cos(12\pi/35))$ (a degree 6 extension of $\mathbb{Q}$).  Otherwise $\mathbb{K}_0=\mathbb{K}_1=\mathbb{Q}_\kappa$.

\end{subsection}


\begin{subsection}{Categories with dimension fields of small degree}\label{secsecint}

\par Here we justify the categories $\mathcal{C}(\mathfrak{g},k)$ for which $[\mathbb{K}_0:\mathbb{Q}]$ is small in Figure \ref{fig:A}.  The curious reader can extend these results to field extensions of any degree with the observation that for all $N\geq3$, $\mathbb{Q}_N$ is a degree $\phi(N)/2$ extension of $\mathbb{Q}$ where $\phi(N)$ is the Euler totient function.  In particular $[\mathbb{Q}_n:\mathbb{Q}]\leq9$ implies $n\leq54$ and $\varphi(n)/2\leq9$.  The low-level exceptions for the classical Lie algebras are well-studied and for the finitely-many exceptional levels for the exceptional Lie algebras the $\fpdim$ fields can be checked by hand.  So our argument can be restricted to the nonexceptional levels in Figure \ref{fig:B}.

\par For Type $A_n$, $\mathbb{K}_0=\mathbb{Q}_\kappa$ thus all $\kappa$ are subject to $\kappa=k+n+1\leq54$, thus $n\leq52$ and $k\leq53-n$, yielding finitely-many non-exceptional cases for analysis.  For Type $B_n$, $\mathbb{K}_0=\mathbb{Q}_{2\kappa}$, thus we require $\kappa=k+2n-1\leq27$.  Moreover $2n\leq26$ and $k\leq28-2n$, with a finite set of acceptable values.  For Type $C_n$, $\mathbb{K}_0=\mathbb{Q}_{2\kappa}$, thus $\kappa=k+n+1\leq27$, and hence $n\leq25$ and $k\leq28-n$.  For Type $D_n$, $\mathbb{K}_0=\mathbb{Q}_{\kappa}$, thus we require $k+2n-2\leq54$ .  For Type $G_2$, $\mathbb{K}_0=\mathbb{Q}_{3\kappa}$, so we require $\kappa/3=k+4\leq18$ thus $k\leq14$.  For Type $F_4$, $\mathbb{K}_0=\mathbb{Q}_{2\kappa}$, so we require $\kappa/2=k+9\leq27$ thus $k\leq18$.  For Type $E_6$, $\mathbb{K}_0=\mathbb{Q}_{\kappa}$ so we require $\kappa=k+12\leq54$ thus $k\leq42$.  For Type $E_7$, $\mathbb{K}_0=\mathbb{Q}_\kappa$ so we require $\kappa=k+18\leq54$ thus $k\leq36$.  And lastly for Type $E_8$, $\mathbb{K}_0=\mathbb{Q}_\kappa$ so we require $\kappa=k+30\leq54$ thus $k\leq24$.

\end{subsection}


\begin{subsection}{The field of all Verlinde eigenvalues}

Given any modular tensor category $\mathcal{C}$, the number field $\mathbb{L}$ generated by all Verlinde eigenvalues $s_{X,Y}/s_{\mathbbm{1},Y}$ (or equivalently by all fields $\mathbb{L}_X$ in Equation (\ref{eqx})) is of interest because it constrains the $T$-matrix.  In particular define
\begin{equation}
\mathbb{L}:=\mathbb{Q}\left(\frac{s_{X,Y}}{s_{\mathbbm{1},Y}}:X,Y\in\mathcal{O}(\mathcal{C})\right).
\end{equation}
We have already seen in Corollary \ref{bettercor} how $\mathbb{L}$, through its subfields $\mathbb{L}_X$, gives an upper bound on the orders of $t_X$. But it also gives lower bounds. Note that $\mathbb{K}_1\subset\mathbb{L}$; the equation
\begin{equation}
s_{X,Y}=\overline{t_\mathbbm{1}}t_Xt_Y\sum_{Z\in\mathcal{O}(\mathcal{C})}c_{XY}^Z\overline{t_Z}s_{Z,\mathbbm{1}}
\end{equation}
(see e.g. \cite[Equation (2.35)]{fuchs1994fusion}) says that $\mathbb{L}\subset \mathbb{K}_\mathbbm{1}(t_Z\overline{t_\mathbbm{1}}:Z \in\mathcal{O}(\mathcal{C}))$. Moreover, by Galois considerations, the rank $r$ of $\mathcal{C}$ must be large enough that the group $\mathrm{Gal}(\mathbb{L}/\mathbb{Q})$ is isomorphic to a subgroup of the symmetric group on $r$ elements.

\begin{theo}\label{49}
For all $\mathcal{C}(\mathfrak{g},k)$, $\mathbb{L}=\mathbb{K}_1$ except for the low level exceptions in Figure \ref{fig:B}, as well as Type $A_n$ for $n>1$, Type $D_n$ for $n$ odd, and Type $E_6$ for $k>1$.
\end{theo}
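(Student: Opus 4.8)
The plan is to reduce the whole statement to a single dichotomy. Since $\mathcal{C}(\mathfrak{g},k)$ is pseudounitary we have $\mathbb{K}_1=\mathbb{L}_{\mathbbm 1}\subseteq\mathbb{L}$, and $\mathbb{K}_1$ is totally real by Proposition \ref{cor:new}; on the other hand $\mathbb{L}$ is closed under complex conjugation, because $\overline{s_{X,Y}/s_{\mathbbm 1,Y}}=s_{X^{\ast},Y}/s_{\mathbbm 1,Y}$ is again a Verlinde eigenvalue. Hence if $\mathbb{L}\not\subseteq\mathbb{R}$ then $\mathbb{L}\supsetneq\mathbb{K}_1$. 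First I would record when $\mathbb{L}$ is real: using $\overline{s_{X,Y}}=s_{X^{\ast},Y}$, positivity of the entries $s_{\mathbbm 1,Y}$, and the fact that distinct rows of the invertible matrix $S$ stay distinct, one gets $\mathbb{L}\subseteq\mathbb{R}$ if and only if $S$ is a real matrix if and only if every simple object of $\mathcal{C}(\mathfrak{g},k)$ is self-dual. In particular $\mathbb{L}=\mathbb{K}_1$ forces all simple objects to be self-dual.

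The negative half is then read off from the structure of $\mathcal{C}(\mathfrak{g},k)$. Duality there is contragredience, so non-self-dual simple objects occur exactly for the types $A_n$ ($n\geq 2$), $D_n$ ($n$ odd), and $E_6$; moreover for each of these a minuscule fundamental weight lies in the Weyl alcove of $\mathcal{C}(\mathfrak{g},k)$ for every $k\geq 1$ and is not self-dual. Thus $\mathbb{L}$ is non-real, hence $\mathbb{L}\neq\mathbb{K}_1$, for all such $\mathfrak{g}$ and $k$, which accounts for the clauses ``type $A_n$ for $n>1$'', ``type $D_n$ for $n$ odd'', and ``type $E_6$ for $k>1$'' (the level-one members are pointed and are subsumed under the Figure \ref{fig:B} exceptions). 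The remaining Figure \ref{fig:B} exceptions — the Ising, metaplectic, and other small-level categories — must be checked individually against their known modular data; among them both behaviours genuinely occur (e.g.\ $\mathbb{L}=\mathbb{K}_1$ for Ising $\mathcal{C}(B_n,1)$, but $\mathbb{L}\supsetneq\mathbb{K}_1$ for metaplectic $\mathcal{C}(B_n,2)$), which is why they are collected rather than decided in the statement.

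For the positive half I would fix a self-dual type — $A_1$, $B_n$, $C_n$, $D_n$ with $n$ even, $E_7$, $E_8$, $F_4$, $G_2$ — and $k$ outside the finite exceptional set, so that $\mathbb{L}$ is real, and argue via the Galois action of Section \ref{sec:mod}. A field automorphism $\sigma$ fixes $\mathbb{L}$ pointwise exactly when $\hat{\sigma}=\mathrm{id}$ on $\mathcal{O}(\mathcal{C}(\mathfrak{g},k))$, and fixes $\mathbb{K}_1=\mathbb{L}_{\mathbbm 1}$ pointwise exactly when $\hat{\sigma}(\mathbbm 1)=\mathbbm 1$, since the normalized columns $(s_{Y,X}/s_{\mathbbm 1,X})_Y$ of $S$ are pairwise distinct; hence $\mathbb{L}=\mathbb{K}_1$ is equivalent to the implication $\hat{\sigma}(\mathbbm 1)=\mathbbm 1\Rightarrow\hat{\sigma}=\mathrm{id}$. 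Given $\hat{\sigma}(\mathbbm 1)=\mathbbm 1$, the relation $\sigma^{2}(t_{\mathbbm 1})=t_{\hat{\sigma}(\mathbbm 1)}=t_{\mathbbm 1}$ forces a congruence on the Galois parameter $\ell$ modulo (essentially) the conductor, and I would then check, type by type using the explicit $\mathbb{K}_1=\mathbb{Q}_{c\kappa}$ from Section \ref{explicitdim} and Figure \ref{fig:B}, that this congruence makes $\hat{\sigma}$ act as $\pm 1$ on every $\mu+\rho$ in the Weyl alcove, i.e.\ $\hat{\sigma}=\mathrm{id}$. This is the same congruence bookkeeping already carried out for $G_2$ and $F_4$ in Section \ref{explicitdim}, now run uniformly; the finitely many levels at which it breaks down are exactly the Figure \ref{fig:B} exceptions, to be settled by hand. (For $A_1$ it is immediate from the closed form $s_{ab}/s_{\mathbbm 1 b}=\sin((a+1)(b+1)\pi/\kappa)/\sin((b+1)\pi/\kappa)$, which visibly lies in $\mathbb{Q}(\cos(\pi/\kappa))=\mathbb{Q}_{2\kappa}=\mathbb{K}_1$.)

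The hard part will be this last step for the self-dual types whose center $Z$ is nontrivial, chiefly $B_n$, $C_n$, $D_n$ ($n$ even), and $E_7$: there the nontriviality of $Z$ obstructs the naive passage from ``$\hat{\sigma}$ fixes $\mathbbm 1$'' to ``$\hat{\sigma}$ fixes every alcove weight'', and is likewise the source of the half-integral exponents in $\tilde{s}_{\lambda\mu}$ that could a priori place a Verlinde eigenvalue in a cyclotomic field strictly larger than $\mathbb{Q}(\xi_{c\kappa})$. One has to show that the $T$-matrix congruence on $\ell$ together with the Weyl-group symmetry of $S$ nevertheless collapses $\hat{\sigma}$ to the identity at all but the low levels; this is the type-sensitive core of the argument, and once it and the explicit low-level checks are in place the stated list of exceptions follows.
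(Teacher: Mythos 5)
Your reformulation is sound and the negative half is handled cleanly, but the positive half has a real gap, and the route you sketch to close it is materially different from — and weaker than — what the paper uses.

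On the parts that work: the observation that $\overline{s_{X,Y}/s_{\mathbbm 1,Y}}=s_{X^{\ast},Y}/s_{\mathbbm 1,Y}$, together with the distinctness of rows of $S$ (Lemma~\ref{claim0}), correctly characterizes $\mathbb{L}\subseteq\mathbb{R}$ as ``all simples self-dual,'' and since $\mathbb{K}_1$ is totally real this immediately shows $\mathbb{L}\neq\mathbb{K}_1$ for types $A_n$ ($n>1$), $D_n$ ($n$ odd), and $E_6$. The paper does essentially the same thing, just phrased through Lemma~\ref{claim0} applied to specific non-self-dual $\lambda$ (and then goes further, pinning $\mathbb{L}$ down exactly to $\mathbb{Q}(\xi_{(n+1)\kappa})$, $\mathbb{Q}_{4\kappa}$, $\mathbb{Q}_{3\kappa}$ via Lemma~\ref{claim1}-style bookkeeping). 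Your equivalence ``$\mathbb{L}=\mathbb{K}_1$ iff $\hat{\sigma}(\mathbbm 1)=\mathbbm 1\Rightarrow\hat{\sigma}=\mathrm{id}$'' is also correct, by the row/column-distinctness of $S$.

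The gap is in how you propose to verify that implication. The only new data you extract from $\hat{\sigma}(\mathbbm 1)=\mathbbm 1$ is $\sigma^2(t_{\mathbbm 1})=t_{\mathbbm 1}$, i.e.\ a congruence of the form $\ell^2\equiv 1$ modulo the order of $t_{\mathbbm 1}$. That congruence has roughly $2^{\omega}$ solutions (one choice of sign per prime power), not just $\ell\equiv\pm1$; by itself it does not constrain $\hat{\sigma}_\ell$ on the rest of the alcove, and the signs can disagree prime-to-prime in exactly the way that could permute spinors or other outer-component weights. What the paper actually does for the self-dual types is different and avoids this entirely: from Equation~(\ref{kac}) one computes that every $\langle\lambda,\mu\rangle$ has denominator dividing a small constant $c$, so $\mathbb{L}\subseteq\mathbb{Q}(\xi_{c\kappa})$; reality (from self-duality) cuts this to $\mathbb{L}\subseteq\mathbb{Q}_{c\kappa}$, and Section~\ref{explicitdim} already shows $\mathbb{K}_1=\mathbb{Q}_{c\kappa}$ at non-exceptional levels, so $\mathbb{L}=\mathbb{K}_1$ with no Galois-orbit analysis at all (only type $B_n$ with $n$ even needs a supplementary closed-form computation of a spinor eigenvalue). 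In other words, the step you label ``the type-sensitive core'' is not merely unfinished: as posed, the $T$-matrix congruence plus $\hat{\sigma}(\mathbbm 1)=\mathbbm 1$ is not visibly strong enough, and you would in effect have to re-derive the inner-product denominator bound — at which point you might as well use it directly, as the paper does. I would replace the Galois-orbit plan with the upper bound $\mathbb{L}\subseteq\mathbb{Q}(\xi_{c\kappa})$ coming from $\langle\lambda,\mu\rangle\in\frac{1}{c}\mathbb{Z}$ for the relevant $c$ in each type, and then quote the $\mathbb{K}_1$ values from Section~\ref{explicitdim}.
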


The field $\mathbb{L}$ for $\mathcal{C}(A_n,k)$ when $n>1$ and $k>2$, equals $\mathbb{Q}(\xi_{(n+1)\kappa})$. For $\mathcal{C}(D_n,k)$ when $n$ is odd and $k>2$,  $\mathbb{L}=\mathbb{Q}_{4\kappa}$. For $\mathcal{C}(E_6,k)$ when $k>1$,  $\mathbb{L}=\mathbb{Q}_{3\kappa}$. The computation of $\mathbb{L}$ for type $A_n$ was first done in \cite[Corollary 3]{Gannon1999}; the argument we give below is new. The remainder of this subsection is devoted to the proof of Proposition \ref{49}.

\par For all $\lambda,\mu\in\mathcal{O}(\mathcal{C}(\mathfrak{g},k))$ where $\mathfrak{g}$ and $k$ are arbitrary, $s_{\lambda,\mu}/s_{\mathbbm{1},\mu}$ is the $\mathfrak{g}$-character for $\lambda$, evaluated at $-2\pi i(\mu+\rho)/\kappa$. One consequence of this is that $s_{\lambda,\mu}/s_{\mathbbm{1},\mu}$ can be expressed as a polynomial in $s_{\Lambda_i,\mu}/s_{\mathbbm{1},\mu}$ where $\Lambda_i$ are the fundamental weights. Another consequence uses the Weyl character formula to obtain
\begin{equation}
\dfrac{s_{\lambda,\mu}}{s_{\mathbbm{1},\mu}}=\frac{\sum_{w\in W}\det(w)\exp(-\langle \lambda+\rho,w(\mu+\rho)\rangle/\kappa)}{\sum_{w\in W}\det(w)\exp(-\langle \lambda+\rho,w(\rho)\rangle/\kappa)}\label{kac}
\end{equation}  
where $W$ is the (finite) Weyl group of $\mathfrak{g}$. The Weyl denominator identity recovers Equation (\ref{weyl}).

\begin{lem}\label{claim0}
Let $\mathcal{C}$ be a modular tensor category.  If $X\neq Y\in\mathcal{O}(\mathcal{C})$, then there exists $Z\in\mathcal{O}(\mathcal{C})$ such that $s_{X,Z}\neq s_{Y,Z}$.
\end{lem}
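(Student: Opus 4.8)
The plan is to deduce this directly from the nondegeneracy of the $S$-matrix that is built into the definition of a modular tensor category. Suppose, for contradiction, that $s_{X,Z}=s_{Y,Z}$ for every $Z\in\mathcal{O}(\mathcal{C})$. Then the two rows of the matrix $S=(s_{X,Z})_{X,Z\in\mathcal{O}(\mathcal{C})}$ indexed by $X$ and by $Y$ coincide. Since $X\neq Y$ these are genuinely distinct rows of $S$, so $S$ has a repeated row and is therefore singular.

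This contradicts the fact that $S$ is invertible for any modular tensor category. By construction (Section \ref{sec:mod}), $S=(1/\sqrt{\dim(\mathcal{C})})\,\tilde{S}$ is a nonzero scalar multiple of the matrix $\tilde{S}$ of traces of double braidings, and the nondegeneracy of the braiding is precisely the assertion that $\tilde{S}$, hence $S$, is invertible; alternatively this is immediate from the fact that $S$ represents one of the standard generators of $SL_2(\mathbb{Z})$. Hence no such coincidence of rows can occur, and a $Z$ with $s_{X,Z}\neq s_{Y,Z}$ must exist.

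There is essentially no obstacle here; the only mild point is the normalization bookkeeping, namely that ``$s_{X,Z}=s_{Y,Z}$ for all $Z$'' is equivalent to the same statement for $\tilde{S}$ and (dividing by $s_{\mathbbm{1},Z}\neq 0$) to ``$s_{X,Z}/s_{\mathbbm{1},Z}=s_{Y,Z}/s_{\mathbbm{1},Z}$ for all $Z$''. One can therefore equivalently phrase the argument as: the assignment sending a simple object $X$ to its Verlinde character $Z\mapsto s_{X,Z}/s_{\mathbbm{1},Z}$ is injective precisely because $S$ is invertible. This is the form in which Lemma \ref{claim0} will be applied in the proof of Proposition \ref{49}.
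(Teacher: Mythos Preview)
Your argument is correct and is essentially the paper's own proof: both deduce the lemma from the nondegeneracy of $S$. The only cosmetic difference is that the paper invokes unitarity of $S$ explicitly, computing $1=\sum_Z\overline{s_{X,Z}}s_{X,Z}=\sum_Z\overline{s_{X,Z}}s_{Y,Z}=\delta_{X,Y}$, whereas you phrase it as ``equal rows $\Rightarrow$ $S$ singular''; these are the same observation.
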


\begin{proof}
We will prove the contrapositive of this statement, so assume $s_{X,Z}= s_{Y,Z}$ for all $Z\in\mathcal{O}(\mathcal{C})$. Then by the unitarity of $S$ \cite[Proposition 2.12]{ENO},
\begin{equation}
1= \sum_{Z\in\mathcal{O}(\mathcal{C})}\overline{s_{X,Z}}s_{X,Z}=\sum_{Z\in\mathcal{O}(\mathcal{C})} \overline{s_{X,Z}}s_{Y,Z}=\delta_{X,Y},
\end{equation}
and moreover $X=Y$.
\end{proof}

\begin{paragraph}{Type $A_n$.} When $k=1$, $\mathbb{K}_0=\mathbb{K}_1=\mathbb{Q}$, so consider $k>1$.  For $\lambda=\sum_{j=1}^nl_j\Lambda_j$, write $t(\lambda):=\sum_jjl_j$.  We have proven (Section \ref{explicitdim}) that $\mathbb{Q}_\kappa\subset\mathbb{K}_1\subset\mathbb{L}$.  From (\ref{kac}), $\mathbb{L}\subset\mathbb{Q}(\xi_{(n+1)\kappa})$. We will prove equality when $n>1$ and $k>2$.

\begin{lem}\label{claim1}
For all $\lambda,\mu\in\mathcal{O}(\mathcal{C}(\mathfrak{sl}_n,k))$,
\begin{equation}
\dfrac{s_{\lambda,\mu}}{s_{\mathbbm{1},\mu}}\in\exp\left(2\pi i\dfrac{t(\lambda)t(\mu+\rho)}{(n+1)\kappa}\right)\mathbb{Q}(\xi_\kappa).
\end{equation}
\end{lem}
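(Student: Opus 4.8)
The plan is to insert the Weyl character formula into $s_{\lambda,\mu}/s_{\mathbbm{1},\mu}$ and peel off the single root of unity that does not already lie in $\mathbb{Q}(\xi_\kappa)$. As recorded just above (\ref{kac}), for $\mathfrak{g}$ of type $A_n$ the quantity $s_{\lambda,\mu}/s_{\mathbbm{1},\mu}=\chi_\lambda(H)$ is the character of $\mathfrak{g}$ evaluated at $H:=-2\pi i(\mu+\rho)/\kappa$, so the Weyl character formula gives
\[
\frac{s_{\lambda,\mu}}{s_{\mathbbm{1},\mu}}=\frac{\sum_{w\in W}\det(w)\exp(\langle w(\lambda+\rho),H\rangle)}{\sum_{w\in W}\det(w)\exp(\langle w(\rho),H\rangle)},
\]
where the denominator is a fixed nonzero scalar multiple of $s_{\mathbbm{1},\mu}$, hence nonzero.

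The arithmetic input is the structure of the weight lattice $P$ modulo the root lattice $Q$, where $P/Q\cong\mathbb{Z}/(n+1)\mathbb{Z}$. I would first record three facts: $\langle P,Q\rangle\subseteq\mathbb{Z}$ (equivalently $\langle\Lambda_i,\alpha_j\rangle=\delta_{ij}$); the $(n+1)$-ality map $t\big(\sum_j\ell_j\Lambda_j\big):=\sum_j j\ell_j$ is a surjective homomorphism $P\to\mathbb{Z}/(n+1)\mathbb{Z}$ with kernel $Q$, so that $t(\mu+\rho)=t(\mu)+t(\rho)$ and $t(w\nu)=t(\nu)$ for all $w\in W$ (since $w\nu-\nu\in Q$); and the congruence $\langle\nu,\nu'\rangle\equiv -\,t(\nu)t(\nu')/(n+1)\pmod{\mathbb{Z}}$ for all $\nu,\nu'\in P$. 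The last is proved by noting that both sides are well-defined and biadditive modulo $\mathbb{Z}$, so it suffices to check it on the fundamental weights, where $\langle\Lambda_i,\Lambda_j\rangle=\min(i,j)-ij/(n+1)$ makes it immediate.

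Now for the extraction. For each $w\in W$ we have $\langle w(\lambda+\rho),H\rangle-\langle\lambda+\rho,H\rangle=-2\pi i\,\langle w(\lambda+\rho)-(\lambda+\rho),\,\mu+\rho\rangle/\kappa$, and $\langle w(\lambda+\rho)-(\lambda+\rho),\mu+\rho\rangle\in\mathbb{Z}$ because $w(\lambda+\rho)-(\lambda+\rho)\in Q$ and $\mu+\rho\in P$. So, pulling $\exp(\langle\lambda+\rho,H\rangle)=\exp(-2\pi i\langle\lambda+\rho,\mu+\rho\rangle/\kappa)$ out of the numerator, each remaining term is a power of $\xi_\kappa$, so the residual sum lies in $\mathbb{Z}[\xi_\kappa]\subseteq\mathbb{Q}(\xi_\kappa)$; the same argument pulls $\exp(-2\pi i\langle\rho,\mu+\rho\rangle/\kappa)$ out of the denominator with residual sum in $\mathbb{Q}(\xi_\kappa)^\times$. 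Dividing,
\[
\frac{s_{\lambda,\mu}}{s_{\mathbbm{1},\mu}}=\exp\!\Big(-2\pi i\,\frac{\langle\lambda+\rho,\mu+\rho\rangle-\langle\rho,\mu+\rho\rangle}{\kappa}\Big)\cdot z=\exp\!\Big(-2\pi i\,\frac{\langle\lambda,\mu+\rho\rangle}{\kappa}\Big)\cdot z
\]
with $z\in\mathbb{Q}(\xi_\kappa)$; writing $\langle\lambda,\mu+\rho\rangle=-\,t(\lambda)t(\mu+\rho)/(n+1)+m$ with $m\in\mathbb{Z}$ turns the surviving phase into $\exp\!\big(2\pi i\,t(\lambda)t(\mu+\rho)/((n+1)\kappa)\big)$ times $\exp(-2\pi i m/\kappa)\in\mathbb{Q}(\xi_\kappa)$, which is the assertion.

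The only genuinely delicate point is keeping track of which of $\lambda,\mu$ carries the Weyl vector in the surviving phase: the superficially symmetric alternative $t(\lambda+\rho)t(\mu)$ would in general give a different coset, since the two phases differ by $\exp\!\big(2\pi i\,t(\rho)(t(\mu)-t(\lambda))/((n+1)\kappa)\big)$, and for odd $n$ one has $t(\rho)\equiv(n+1)/2\pmod{n+1}$, so this factor need not lie in $\mathbb{Q}(\xi_\kappa)$. Thus one must use the precise normalization of the character formula above rather than guess. The remaining ingredients — the three lattice facts and the integrality of the residual sums — are routine.
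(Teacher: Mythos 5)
Your proof is correct and is built on the same central arithmetic input as the paper's --- the congruence $\langle\nu,\nu'\rangle\equiv -t(\nu)t(\nu')/(n+1)\pmod{\mathbb{Z}}$ for $\nu,\nu'\in P$, together with $\langle P,Q\rangle\subseteq\mathbb{Z}$ --- but you expand the character differently. You invoke the Weyl character formula in the alternating-sum form, factor $\exp(\langle\lambda+\rho,H\rangle)$ out of the numerator and $\exp(\langle\rho,H\rangle)$ out of the denominator (using that $w(\lambda+\rho)-(\lambda+\rho)\in Q$, i.e. that $W$ acts trivially on $P/Q$), and divide. The paper instead uses the plain weight-multiplicity expansion $\chi_\lambda(H)=\sum_{\alpha\in Q}m_\lambda(\lambda-\alpha)\,e^{\langle\lambda-\alpha,H\rangle}$: every weight of $V_\lambda$ is $\lambda-\alpha$ with $\alpha\in Q$, so $\langle\alpha,\mu+\rho\rangle\in\mathbb{Z}$ lets one read off the surviving phase in one step, with no denominator to handle. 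Both arguments are sound; the paper's is a line shorter because there is no division to perform, while yours has the modest advantage of making it transparent (via your closing remark about $t(\rho)$) why the phase is $t(\lambda)t(\mu+\rho)$ and not some other arrangement of Weyl vectors, something the paper leaves implicit.

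One cosmetic point: in your final paragraph the two phases differ by $\exp\!\bigl(2\pi i\,t(\rho)(t(\lambda)-t(\mu))/((n+1)\kappa)\bigr)$, i.e. the sign inside is $t(\lambda)-t(\mu)$ rather than $t(\mu)-t(\lambda)$. This does not affect the (correct) conclusion that the factor need not lie in $\mathbb{Q}(\xi_\kappa)$.
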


\begin{proof}
When $1\leq i\leq j\leq n$, $\langle\Lambda_i,\Lambda_j\rangle=\frac{i(n+1-j)}{n+1}$ so $\langle\Lambda_i,\Lambda_j\rangle\equiv-\frac{ij}{n+1}\pmod{1}$ for all $1\leq i,j\leq n$. Thus for arbitrary weights $\lambda=\sum_{i=1}^na_i\Lambda_i$ and $\mu=\sum_{j=1}^nb_j\Lambda_j$,
\begin{equation}
\langle\lambda,\mu\rangle=\frac{-1}{n+1}\sum_{1\leq i,j\leq n}a_ib_jij=\frac{-1}{n+1}t(\lambda)\,t(\mu).
\end{equation}
Now, as indicated above, $s_{\lambda,\mu}/s_{\mathbbm{1},\mu}$ is the character of $\lambda$ evaluated at $-2\pi i(\mu+\rho)/\kappa$ so it is a sum of terms of the form $\exp(-2\pi i\langle\lambda-\alpha,\mu+\rho\rangle/\kappa)$ for vectors $\alpha$ in the root lattice. But $\langle\alpha,\mu+\rho\rangle\in\mathbb{Z}$ so $\langle\lambda-\alpha,\mu+\rho\rangle\equiv\frac{-1}{n+1}t(\lambda)\,t(\mu+\rho)\pmod{1}$ which proves our claim.
\end{proof}

We want to show $\mathbb{L}$ contains $\mathbb{Q}(\xi_\kappa)$. We know $\mathbb{L}$ contains $\mathbb{K}_1$ and hence $\mathbb{Q}_\kappa$.  If we take $\lambda=2\Lambda_1+\Lambda_{n-1}$ (possible since $k\geq3$), then $\lambda$ is not self-dual since $n>1$. Hence by Lemma \ref{claim0}, there exists $\mu$ such that $s_{\lambda,\mu}\neq s_{\lambda^\ast,\mu}=\overline{s_{\lambda,\mu}}$. This means $s_{\lambda,\mu}/s_{\mathbbm{1},\mu}$ is nonreal, and by Lemma \ref{claim1} lies in $\mathbb{Q}(\xi_\kappa)$, so we find that $\mathbb{L}$ contains $\mathbb{Q}(\xi_\kappa)$.

Moreover, $\delta\otimes\Lambda_1\not\cong \Lambda_1$ for any simple current $\delta$, so by Lemma \ref{claim0} there exists $\mu$ with $t(\mu)$ coprime to $n+1$, and $s_{\Lambda_1,\mu}\neq 0$. By Lemma \ref{claim1}, this means $\xi_{(n+1)\kappa}^{t(\mu+\rho)}\in\mathbb{L}$. But again by Lemma \ref{claim1} using $s_{\Lambda_1,\mathbbm{1}}/s_{\mathbbm{1},\mathbbm{1}}$, $\xi_{(n+1)\kappa}^{t(\rho)}\in\mathbb{L}$. Hence $\xi_{(n+1)\kappa}^{t(\mu)}\in\mathbb{L}$.

\end{paragraph}

\begin{paragraph}{Type $B_n$.} We will show that $\mathbb{L}=\mathbb{K}_1$. By (\ref{kac}), $\mathbb{L}$ is contained in $\mathbb{Q}(\xi_{4\kappa})$. It is also real, since duality is trivial, so in fact $\mathbb{L}$ is contained in $\mathbb{Q}_{4\kappa}$. But $\mathbb{L}$ must contain $\mathbb{K}_1$. Thus $\mathbb{L}=\mathbb{Q}_{4\kappa}$ except possibly when $n$ is even, when it contains $\mathbb{Q}_{2\kappa}$. If $n$ is even,
\begin{equation}
\dfrac{s_{\Lambda_n,\mu}}{s_{\mathbbm{1},\mu}}=2^n\prod_{j=1}^n\cos\left(\dfrac{2\pi (\mu[j]+n-j+1/2)}{2\kappa}\right)
\end{equation}
where $\mu[j]=\sum_{i=j}^{n-1}a_i+a_n/2$ assuming $\mu=\sum_{i=1}^na_i\Lambda_i$. We see that either all $(\mu[j]+n-j+1/2)/(2\kappa)$ lie in $\frac{1}{4\kappa}+\frac{1}{2\kappa}\mathbb{Z}$ (hence $s_{\Lambda_n,\mu}/s_{\mathbbm{1},\mu}\in\mathbb{Q}_{2\kappa}$ since $n$ is even), or all lie in  $\frac{1}{2\kappa}\mathbb{Z}$ (hence $s_{\Lambda_n,\mu}/s_{\mathbbm{1},\mu}\in\mathbb{Q}_{2\kappa}$). Now, it is manifest from (\ref{kac}) that $s_{\lambda,\mu}/s_{\mathbbm{1},\mu}\in\mathbb{Q}_{2\kappa}$ when $\lambda$ is a nonspinor, e.g. for $\lambda=\Lambda_i$ for all $i<n$.

\end{paragraph}

 \begin{paragraph}{Type $C_n$.}  We will show for $k>1$ that $\mathbb{L}=\mathbb{K}_1$. By (\ref{kac}), $\mathbb{L}$ is contained in $\mathbb{Q}(\xi_{2\kappa})$. It is also real, since charge conjugation is trivial, so in fact $\mathbb{L}$ is contained in $\mathbb{Q}_{2\kappa}$.  But we also know $\mathbb{L}$ must contain $\mathbb{K}_1$. Thus $\mathbb{L}=\mathbb{Q}_{2\kappa}$.
 
\end{paragraph}

\begin{paragraph}{Type $D_n$.}  Assume $k>2$.  When $n$ is odd, $\mathbb{L}$ must contain $\mathbb{Q}_\kappa$, and be contained in $\mathbb{Q}(\xi_{4\kappa})$. We see that
\begin{equation}
\dfrac{s_{\Lambda_n,\mu}}{s_{\mathbbm{1},\mu}}+\dfrac{s_{\Lambda_{n-1},\mu}}{s_{\mathbbm{1},\mu}}= 2^n\prod_{j=1}^n\cos\left(2\pi \frac{\mu[j]+n-j}{2\kappa}\right)
\end{equation}
where $\mu[j]=\sum_{i=j}^{n-1}b_j+(b_n-b_{n-1})/2$ assuming $\mu=\sum_{i=j}^nb_j\Lambda_j$.  Hence taking $\mu=\Lambda_n$ we see that it must lie in $\mathbb{Q}_{4\kappa}$ (and not $\mathbb{Q}_{2\kappa}$).  Lemma \ref{claim0} applied to  $\Lambda_n^\ast\neq \Lambda_n$ then means $\mathbb{L}=\mathbb{Q}(\xi_{4\kappa})$.
   
When $n$ is even, $\langle\Lambda_i,\Lambda_j\rangle\in\frac{1}{2}\mathbb{Z}$ which means $\langle\lambda,\mu\rangle\in\frac{1}{2}\mathbb{Z}$. Also duality is trivial, so $\mathbb{L}\subseteq\mathbb{Q}_{2\kappa}$. Also, $\mathbb{K}_1\supseteq\mathbb{Q}_\kappa$ lies in $\mathbb{L}$. To show $\mathbb{L}=\mathbb{Q}_{2\kappa}$, note that
\begin{equation}
\dfrac{s_{\Lambda_1,\Lambda_n}}{s_{\mathbbm{1},\Lambda_n}}=2\sum_{j=1}^n\cos\left(\frac{2\pi}{\kappa}\left(r-j+\frac{1}{2}\right)\right)
\end{equation}
This must lie in $\mathbb{Q}_{2\kappa}$, but when $k$ is even, not $\mathbb{Q}_\kappa$: use the Galois automorphism of $1+\kappa$.
   
\end{paragraph}

\begin{paragraph}{Type $E_6$.}  One may easily verify $\mathbb{L}=\mathbb{Q}(\xi_3)$ for $\mathcal{C}(E_6,1)$, so assume $k>1$.  It is clear $\mathbb{Q}(\xi_{3\kappa})\supseteq\mathbb{L}\supseteq\mathbb{K}_1=\mathbb{Q}_\kappa$. We compute $\langle\lambda,\mu\rangle\equiv\frac{1}{3}t(\lambda)t(\mu)$ where $t(\lambda)=\lambda_1-\lambda_2+\lambda_4-\lambda_5$.  So $s_{\lambda,\mu}/s_{\mathbbm{1},\mu}\in\xi_{3\kappa}^{-t(\lambda)t(\mu)}\mathbb{Q}(\xi_\kappa)$. Now $(\Lambda_1+\Lambda_2)^\ast=\Lambda_4+\Lambda_5\not\cong\Lambda_1+\Lambda_2$, so $s_{\Lambda_1+\Lambda_2,\mu}\not\in\mathbb{R}$ for some $\mu$ by Lemma \ref{claim0}, so $\mathbb{L}$ contains all of $\mathbb{Q}(\xi_\kappa)$. Also, $\Lambda_1\not\cong \delta\otimes\Lambda_1$ where $\delta$ is a nontrivial simple current, so by Lemma \ref{claim0} $s_{\Lambda_1,\mu}\neq 0$ for some $\mu$ with $t(\mu)$ coprime to 3, so in fact $\mathbb{L}=\mathbb{Q}(\xi_{3\kappa})$ as desired.

\end{paragraph}

\begin{paragraph}{Type $E_7$ and $E_8$.} One may verify $\mathbb{L}=\mathbb{K}_1=\mathbb{Q}_{2\kappa}$ for $\mathcal{C}(E_7,k)$ and $\mathbb{L}=\mathbb{K}_1=\mathbb{Q}_\kappa$ for $\mathcal{C}(E_8,k)$ by the usual arguments.
\end{paragraph}

\begin{paragraph}{Type $F_4$.} We compute $\langle\lambda,\mu\rangle\in\frac{1}{2}\mathbb{Z}$ and duality is trivial, so manifestly $\mathbb{L}\subseteq\mathbb{Q}_{2\kappa}$. But (apart from $k=1,3,4$) $\mathbb{K}_1=\mathbb{Q}_{2\kappa}$ so we are done except for those 3 levels.

\end{paragraph}

\begin{paragraph}{Type $G_2$.} We compute $\langle\lambda,\mu\rangle\in\frac{1}{3}\mathbb{Z}$ and duality is trivial, so manifestly $\mathbb{L}\subseteq\mathbb{Q}_{3\kappa}$. But (apart from $k=1,3$) $\mathbb{K}_1=\mathbb{Q}_{3\kappa}$ so done except for those 2 levels.

\end{paragraph}

\end{subsection}

\end{section}


\begin{section}{Discussion}\label{discussion}

In many ways, the categories $\mathcal{C}(\mathfrak{g},k)$ and their relatives are the most complex examples of fusion, braided, and modular categories in the current literature.  But here we have shown that, even on the level of fusion rings, the numerical invariants ($\fpdim$s, central charge, etc.) associated to these categories generate a severely restricted collection of algebraic number fields.  One possibility is that the fields appearing above are a consequence of the definitions of fusion rings or categories.  This seems highly unlikely to us, and so we must believe that there exists some vast untapped source of examples whose numerical invariants will generate a more diverse collection of algebraic number fields, i.e. \!different from $\mathbb{Q}_n$ or $\mathbb{Q}(\sqrt{n})$ for $n\in\mathbb{Z}_{\geq1}$.  We present the following questions and conjectures to encourage the search for more interesting examples.

\begin{paragraph}{General discussion}We will phrase the following in terms of fusion rings, but the same constructions and open questions apply equally well, if not more so, to fusion categories.  Products of fusion rings are a cheap way to create fusion rings $R$ where $\mathbb{K}_x$ for $x\in R$ are a variety of distinct fields.  A fusion ring $R$ is \emph{prime} if it cannot be factored as $R=S\times T$ for any fusion rings $S$ and $T$.

\begin{question}
Does a prime fusion ring $R$ exist whose dimensional grading is not cyclic?
\end{question}

\begin{question}
Does a prime fusion ring $R$ exist such that $\fpdim(x),\fpdim(y)\notin\mathbb{K}_0$ for some $x,y\in R$, and $\mathbb{K}_x\cap\mathbb{K}_y=\mathbb{K}_0$?
\end{question}

The exact relationship between categorical and Frobenius-Perron dimensions is not known.  Their connection is related to the major open question which is whether or not every fusion category possesses a pivotal/spherical structure \cite[Question 4.8.3]{tcat}.

\begin{question}
Does a fusion category $\mathcal{C}$ exist with $\dim(\mathcal{C})\not\in\mathbb{K}_0?$
\end{question}

We have shown the field extension $\mathbb{K}_0/\mathbb{Q}$ is a major component of the structure of a fusion ring.  For more evidence of this, the degree of the field extension $\mathbb{K}_0/\mathbb{Q}$ for a fusion ring $R$ is bounded above by its rank.  Indeed, any fusion ring $R$ has at least $[\mathbb{K}_0:\mathbb{Q}]$ distinct \emph{formal codegrees} \cite{codegrees} consisting of $\mathrm{FPdim}(R)$ and its Galois conjugates, which are at most $\mathrm{rank}(R)$ in number, hence
\begin{equation}
[\mathbb{K}_0:\mathbb{Q}]\leq\mathrm{rank}(R).
\end{equation}
The extremity of this inequality should be of interest.
\begin{defi}
A \emph{minimal rank} fusion ring (or fusion category) $R$ has $[\mathbb{K}_0:\mathbb{Q}]=\mathrm{rank}(R)$. 
\end{defi}
Any minimal rank fusion ring is commutative \cite[Example 2.18]{ost15} as its formal codegrees are distinct, and any minimal rank fusion category is Galois conjugate to a pseudounitary fusion category.  An infinite family of examples are $\mathcal{C}(\mathfrak{sl}_2,k)_\mathrm{ad}$ where $\kappa=k+2$ is prime.

\begin{question}
Do minimal rank fusion rings (fusion categories) of each rank $n\in\mathbb{Z}_{\geq1}$ exist?
\end{question}

\begin{question}
Is every minimal rank fusion category braided? modular?  How many exist up to the appropriate equivalence?
\end{question}

\end{paragraph}

\begin{paragraph}{Central charge discussion} We know for fixed $\mathfrak{g}$, the order of $\xi(\mathcal{C}(\mathfrak{g},k))$ tends to infinity as $k\to\infty$.  Similarly, the degree of the field extension $\mathbb{K}_0/\mathbb{Q}$ tends to infinity with $k$ (Equation (\ref{quantcharge})).  Superficially this would appear to be at odds with Proposition \ref{lemprime}.  But if $N=p^b$ is a prime power for some $b\in\mathbb{Z}_{\geq1}$ and $2p^a+1$ is not prime for any $a\in\mathbb{Z}_{\geq1}$ with $1\leq a\leq b$, then $2N$ is not in the image of $\varphi$, hence there exists no $\mathcal{C}(\mathfrak{g},k)$ with $[\mathbb{K}_0:\mathbb{Q}]=N$.  Moreover, it is easy to verify that $\xi(\mathcal{C}(\mathfrak{g},k))^8=1$ if and only if $\kappa$ divides $h^\vee\dim\mathfrak{g}$, e.g. \!$\xi(\mathcal{C}(\mathfrak{sl}_2,k))^8=1$ if and only if $\kappa$ divides $6$ and therefore $k=1,4$.

\begin{theo}\label{conctheo}
Let $N=p^b$ be a prime power for some $b\in\mathbb{Z}_{\geq1}$ and assume $2p^a+1$ is not prime for any $a\in\mathbb{Z}_{\geq1}$ with $1\leq a\leq b$.  There does not exist $\mathcal{C}(\mathfrak{g},k)$ such that the exponent of $\mathrm{Gal}(\mathbb{K}_0/\mathbb{Q})$ is equal to $N$.
\end{theo}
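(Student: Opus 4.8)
The plan is to combine one elementary fact about abelian $p$-groups with the explicit determination of $\mathbb{K}_0$ carried out in Section~\ref{explicitdim} and recorded in Figure~\ref{fig:B}. Write $G:=\mathrm{Gal}(\mathbb{K}_0/\mathbb{Q})$, a finite abelian group since $\mathbb{K}_0$ is real and cyclotomic \cite[Corollary~8.54]{DNO}. Suppose for contradiction that some $\mathcal{C}(\mathfrak{g},k)$ has $G$ of exponent $N=p^b$. First I would note that $p$ must be odd: if $p=2$, then $a=1$ lies in $\{1,\dots,b\}$ and the hypothesis would make $2\cdot2+1=5$ composite, which is false. Since a finite abelian group of prime-power exponent is a $p$-group, $[\mathbb{K}_0:\mathbb{Q}]=|G|=p^{c}$ for some $c\ge b\ge1$, so in particular $\mathbb{K}_0\neq\mathbb{Q}$.

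Next I would dispose of the bulk of the cases, namely those with $\mathbb{K}_0=\mathbb{Q}_m$ for some $m$ — which by Figure~\ref{fig:B} includes every generic level (with $m\in\{\kappa,2\kappa,3\kappa\}$) and the exceptional levels whose field is $\mathbb{Q}_5,\mathbb{Q}_7,\mathbb{Q}_8,\mathbb{Q}_9,\mathbb{Q}_{11},\mathbb{Q}_{12}$. Here $G\cong(\mathbb{Z}/m\mathbb{Z})^{\times}/\{\pm1\}$, so $\varphi(m)=2p^{c}$. The arithmetic heart of the proof is to show that this equation forces $m=2^{e}q$ with $e\le1$ and $q=2p^{c}+1$ prime: writing $m=2^{e}\prod_i q_i^{a_i}$ with the $q_i$ distinct odd primes, each $q_i-1$ is even, so comparing $2$-adic valuations (the right side has valuation $1$) leaves exactly one odd prime $q\mid m$, with $q\equiv3\pmod{4}$ and $e\le1$; then $\varphi(m)=q^{a-1}(q-1)=2p^{c}$, and writing $q-1=2u$ with $u$ odd gives $q^{a-1}u=p^{c}$, whence $q^{a-1}$ and $u$ are powers of $p$. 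If $a\ge2$ then $q=p$ and $p-1=2p^{c-a+1}$, which forces $c-a+1=0$ and $p=3$ — but $p$ is odd with $2\cdot3+1=7$ prime, so the hypothesis excludes $p=3$. Hence $a=1$, $u=p^{c}$, $q=2p^{c}+1$ is prime, and $G\cong(\mathbb{Z}/q\mathbb{Z})^{\times}/\{\pm1\}$ is cyclic of order $p^{c}$, of exponent $p^{c}$. Equating with $N=p^{b}$ gives $c=b$, so $2p^{b}+1$ is prime, contradicting the hypothesis with $a=b$.

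It remains to handle the finitely many exceptional levels whose $\mathbb{K}_0$ is not of the form $\mathbb{Q}_m$, i.e.\ $\mathbb{K}_0\in\{\mathbb{Q},\ \mathbb{Q}(\sqrt6),\ \mathbb{Q}(\sqrt{21}),\ \mathbb{Q}(\cos(2\pi/13)-\cos(3\pi/13)),\ \mathbb{Q}(\cos(2\pi/35)+\cos(12\pi/35))\}$, the last two appearing only for $\mathcal{C}(F_4,4)$ and $\mathcal{C}(E_8,5)$. For these, $G$ has exponent $1$, $2$, $2$, $3$, and $6$ respectively — in the last case $G$ is cyclic of order $6$ since $\mathbb{K}_0$ is a subfield of $\mathbb{Q}_{35}$ and $\mathrm{Gal}(\mathbb{Q}_{35}/\mathbb{Q})$ is cyclic. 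None of these equals a prime power $N=p^{b}$ subject to the stated primality condition, because $N=2$ would require $2\cdot2+1=5$ composite, $N=3$ would require $2\cdot3+1=7$ composite, and $6$ is not a prime power. This exhausts all cases and yields the contradiction. I expect the main obstacle to be the arithmetic lemma of the second paragraph — reducing $\varphi(m)=2p^{c}$ with $p$ odd to the primality of $2p^{c}+1$ — together with verifying that the list of exceptional entries in Figure~\ref{fig:B} is complete; everything else is routine.
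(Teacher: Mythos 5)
Your proof is correct, but it takes a genuinely different route from the paper's. The paper argues via its central charge machinery: since $\mathbb{K}_0=\mathbb{Q}_n$ for $\mathcal{C}(\mathfrak{g},k)$ (apart from a few low-level exceptions), every Galois group that can occur is already realized by some $\mathcal{C}(\mathfrak{sl}_3,k)$ where $\mathbb{L}_\mathbbm{1}=\mathbb{K}_1=\mathbb{K}_0$; then Proposition \ref{lemprime} forces $\xi(\mathcal{C}(\mathfrak{sl}_3,k))^8=1$, which by the divisibility criterion $\kappa\mid h^\vee\dim\mathfrak{g}$ restricts $k$ to a short finite list in which $[\mathbb{K}_0:\mathbb{Q}]\le 4$ and the exponent is never a prime power satisfying the hypothesis. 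You instead bypass Proposition \ref{lemprime} entirely and run a direct elementary-number-theory argument: since $\mathrm{Gal}(\mathbb{Q}_m/\mathbb{Q})$ is a $p$-group with $p$ odd, $\varphi(m)=2p^c$, and a $2$-adic valuation count forces $m=2^{e}q^{a}$ with $e\le1$ and a single odd prime $q\equiv3\pmod4$; the case $a\ge2$ forces $p=3$ (excluded since $2\cdot3+1=7$), while $a=1$ forces $q=2p^b+1$ prime, contradicting the hypothesis directly. What each buys: the paper's route is shorter given the earlier machinery and exhibits Proposition \ref{conctheo} as a clean application of Proposition \ref{lemprime}, which is the rhetorical point of that Discussion section; your route is more self-contained, requires no input from Section \ref{seccharge}, and — notably — deals more carefully with the exceptional $\mathbb{K}_0$ values, explicitly checking $\mathbb{Q}(\sqrt6)$, $\mathbb{Q}(\sqrt{21})$, and the degree-$3$ and degree-$6$ fields for $\mathcal{C}(F_4,4)$ and $\mathcal{C}(E_8,5)$, where the paper merely waves at ``2 unimportant exceptions.'' Both proofs rest on the explicit determination of $\mathbb{K}_0$ from Section \ref{explicitdim} and Figure \ref{fig:B}, so that shared dependency is unavoidable either way.
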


\begin{proof}
For all $\mathcal{C}(\mathfrak{g},k)$, $\mathbb{K}_0=\mathbb{Q}_n$ for some $n\in\mathbb{Z}_{\geq1}$ (with 2 unimportant exceptions).  Hence all possible Galois groups $\mathrm{Gal}(\mathbb{Q}_n/\mathbb{Q})$ are realized by $\mathcal{C}(\mathfrak{sl}_3,k)$ for some $k\in\mathbb{Z}_{\geq1}$ for which we have $\mathbb{L}_\mathbbm{1}=\mathbb{K}_1=\mathbb{K}_0$.  But $\xi(\mathcal{C}(\mathfrak{sl}_3,k))^8=1$ if and only if $k=1,5,7,11,23$.  But Proposition \ref{lemprime} implies the exponent of $\mathrm{Gal}(\mathbb{K}_0/\mathbb{Q})$ is not $N$ because in the 5 exceptional cases of $\mathfrak{sl}_3$, $[\mathbb{K}_0:\mathbb{Q}]\leq4$.
\end{proof}
Proposition \ref{conctheo} shows that, if possible, it would require creativity to construct a modular tensor categories of the above type out of known examples.
\begin{question}
Is the bound in Corollary \ref{bettercor} tight for realizable exponents of $\mathbb{Q}_n/\mathbb{Q}$?
\end{question}
\end{paragraph}

\begin{paragraph}{Witt group discussion}  A group homomorphism $S:\mathcal{W}\to s\mathcal{W}$ was introduced in \cite[Section 5.3]{DNO} whose kernel is precisely the subgroup $\mathcal{W}_{Ising}\cong\mathbb{Z}/16\mathbb{Z}$ generated by the Witt equivalence classes of Ising braided fusion categories.  The authors then ask whether $S$ is surjective \cite[Question 5.15]{DNO}.  This question is related to the minimal modular extension conjecture for slightly degenerate braided fusion categories.  Although we do not have an answer to this question, our results allow one to truncate the question to weakly integral Witt classes where we conjecture the following.

\begin{conjecture}
The homomorphism $\left.S\right|_{\mathcal{W}_\mathbb{Q}}:\mathcal{W}_\mathbb{Q}\to s\mathcal{W}_\mathbb{Q}$ is surjective, i.e.
\begin{equation}
\mathcal{W}_\mathbb{Q}\cong s\mathcal{W}_\mathbb{Q}\times\mathcal{W}_{Ising}.
\end{equation}
\end{conjecture}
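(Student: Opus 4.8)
The plan is to recast the conjecture as an existence statement for \emph{minimal modular extensions} of integral slightly degenerate braided fusion categories, where essentially all the content sits; the remainder is bookkeeping with the dimension-field machinery already established. First I would isolate what comes for free. From \cite{DNO} one has the exact sequence $0\to\mathcal{W}_{Ising}\to\mathcal{W}\xrightarrow{S}s\mathcal{W}$, and each Ising braided fusion category is weakly integral with rational global dimension and is its own completely anisotropic representative, so $\mathcal{W}_{Ising}\subseteq\mathcal{W}_\mathbb{Q}$ and hence $\ker\!\big(S|_{\mathcal{W}_\mathbb{Q}}\big)=\mathcal{W}_{Ising}$. Moreover $S$ carries $\mathcal{W}_\mathbb{Q}$ into $s\mathcal{W}_\mathbb{Q}$: if $[\mathcal{C}]\in\mathcal{W}_\mathbb{Q}$ with completely anisotropic representative $\mathcal{C}_0$, then $\mathrm{FPdim}(\mathcal{C}_0)\in\mathbb{Q}$ by Lemma \ref{int}, the category $\mathcal{C}_0\boxtimes\mathrm{sVec}$ is slightly degenerate with rational global dimension, and the slightly degenerate analogue of Lemma \ref{lem:witt2} shows its completely $\mathrm{sVec}$-anisotropic representative again has rational global dimension. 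So the conjecture is equivalent to the conjunction of (i) $S|_{\mathcal{W}_\mathbb{Q}}$ being onto $s\mathcal{W}_\mathbb{Q}$ and (ii) the resulting short exact sequence splitting.

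For (i), the key reduction is Corollary \ref{corint}: every class in $s\mathcal{W}_\mathbb{Q}$ is represented by a completely $\mathrm{sVec}$-anisotropic $\mathcal{D}$ with $\mathrm{FPdim}(X)\in\mathbb{Q}(\mathrm{FPdim}(\mathcal{D}))=\mathbb{Q}$ for all simple $X$, so $\mathcal{D}$ is in fact integral. It then suffices to produce a minimal modular extension $\mathcal{C}\supseteq\mathcal{D}$: such $\mathcal{C}$ is modular with $\mathcal{D}$ the centralizer of $\mathrm{sVec}$ in $\mathcal{C}$ and $\mathrm{FPdim}(\mathcal{C})=2\,\mathrm{FPdim}(\mathcal{D})\in\mathbb{Q}$, hence $[\mathcal{C}]\in\mathcal{W}_\mathbb{Q}$ by Lemmas \ref{int} and \ref{lem:witt2}, and $S([\mathcal{C}])=[\mathcal{D}]$ by the description of $S$ in \cite{DNO}. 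To build the extension I would decompose $\mathcal{D}\simeq\mathcal{D}_\mathrm{pt}\boxtimes_\mathrm{sVec}\mathcal{D}_1\boxtimes_\mathrm{sVec}\cdots\boxtimes_\mathrm{sVec}\mathcal{D}_m$ as in the proof of Proposition \ref{genthm}, with each $\mathcal{D}_i$ integral and $s$-simple, use that minimal modular extensions behave multiplicatively under $\boxtimes_\mathrm{sVec}$ (the extensions of a product being assembled from those of the factors as a torsor over $\mathcal{W}_{Ising}\cong\mathbb{Z}/16\mathbb{Z}$), handle $\mathcal{D}_\mathrm{pt}$ by the classical existence of modular extensions of pointed slightly degenerate categories, and thereby reduce to a single integral $s$-simple factor.

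The main obstacle is exactly this last case: proving that every integral (it would suffice to treat $s$-simple) slightly degenerate braided fusion category admits a minimal modular extension. This is the integral instance of the ``$16$-fold way'' for $\mathrm{sVec}$; it is classical for super-Tannakian $\mathcal{D}_i\simeq\mathrm{Rep}(G,z)$ but open in general, and I would attack it either by showing such $\mathcal{D}_i$ are weakly group-theoretical and constructing the extension through gauging/equivariantization of a pointed model, or by an arithmetic route exploiting the strong constraints on the cyclotomic field and Galois action carried by the modular data of $\mathcal{D}_i$ that follow from Sections \ref{sec:main}--\ref{seccharge}. Finally, for (ii) I would upgrade surjectivity to the stated direct-product decomposition by pinning down a canonical section: the multiplicative central charge $\xi$ is a homomorphism on $\mathcal{W}$ restricting to an isomorphism $\mathcal{W}_{Ising}\xrightarrow{\sim}\mu_{16}$, so selecting, for the anisotropic representative of a given class in $s\mathcal{W}_\mathbb{Q}$, the minimal modular extension whose central charge lies in a fixed transversal of $\mu_{16}$ inside $\xi(\mathcal{W}_\mathbb{Q})$ should furnish a splitting $s\mathcal{W}_\mathbb{Q}\to\mathcal{W}_\mathbb{Q}$, giving $\mathcal{W}_\mathbb{Q}\cong s\mathcal{W}_\mathbb{Q}\times\mathcal{W}_{Ising}$.
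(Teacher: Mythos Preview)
There is nothing in the paper to compare against: the statement is explicitly a \emph{Conjecture}, and the surrounding text says ``we do not have an answer to this question.'' The paper records only two supporting remarks---that $S|_{\mathcal{W}_\mathbb{Q}}$ lands in $s\mathcal{W}_\mathbb{Q}$ by Corollary~\ref{corint}, and that classes in $s\mathcal{W}_\mathbb{Q}$ are represented by integral categories---both of which you have already folded into your outline. Your proposal is therefore not to be judged against a paper proof but on its own merits, and it is a strategy, not a proof: you yourself flag the decisive step, existence of a minimal modular extension for an arbitrary integral $s$-simple slightly degenerate braided fusion category, as open. The two attack routes you sketch (showing weak group-theoreticity, or squeezing the Galois/arithmetic constraints of Sections~\ref{sec:main}--\ref{seccharge}) are plausible directions but neither is currently known to succeed, so what you have is an honest reduction of the conjecture to a recognized hard problem.

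There is also a genuine gap in your treatment of part~(ii). Even granting surjectivity, your splitting via central charge needs more than a set-theoretic transversal of $\mu_{16}$ inside $\xi(\mathcal{W}_\mathbb{Q})$: for the resulting section $s\mathcal{W}_\mathbb{Q}\to\mathcal{W}_\mathbb{Q}$ to be a group homomorphism you need the transversal to be a \emph{subgroup}, i.e.\ you need $\mu_{16}$ to be a direct summand of $\xi(\mathcal{W}_\mathbb{Q})$. Since $\xi(\mathcal{W}_\mathbb{Q})$ is a subgroup of $\mu_\infty\cong\mathbb{Q}/\mathbb{Z}$, this holds if and only if its $2$-primary part is exactly $\mu_{16}$ (a cyclic $2$-group has no proper nontrivial direct summands). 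You have not argued that no class in $\mathcal{W}_\mathbb{Q}$ has central charge of $2$-order exceeding~$16$, and the bounds of Corollary~\ref{bettercor} do not obviously give this, since they are phrased in terms of $\mathbb{L}_\mathbbm{1}$ for a specific representative rather than for the anisotropic one. Without that, the section you describe need not respect the group law.
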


Corollary \ref{corint} ensures the image of $\left.S\right|_{\mathcal{W}_\mathbb{Q}}$ lies in $s\mathcal{W}_\mathbb{Q}$, but moreover the classes in $s\mathcal{W}_\mathbb{Q}$ are represented by integral categories (representation categories of finite-dimensonal quasi-Hopf algebras \cite[Theorem 8.33]{ENO}), which may simplify the proof somewhat.  Even less is known for larger algebraic number fields.

\begin{question}
Does an algebraic number field $\mathbb{K}\neq\mathbb{Q}$ exist such that $\mathcal{W}_\mathbb{K}=\mathcal{W}_\mathbb{Q}$?
\end{question}

\begin{question}
Are $\mathcal{W}_\mathbb{K}/\mathcal{W}_\mathbb{Q}$ finitely-generated for $\mathbb{K}\neq\mathbb{Q}$?
\end{question}
\end{paragraph}

\end{section}

\bibliography{bib}
\bibliographystyle{plain}

\end{document}